\documentclass[12pt]{article}
\usepackage{amssymb,amsmath,amsthm, amsfonts}
\usepackage{graphicx}
\usepackage{epsfig}
\usepackage{tikz}

\textwidth=16.5cm \textheight=23.5cm \headheight=0cm
\topmargin=1cm
\def\disp{\displaystyle}

\oddsidemargin 0cm \headsep=-1.0cm \raggedbottom

\def\dref#1{(\ref{#1})}

\theoremstyle{plain}
\newtheorem{theorem}{Theorem}[section]
\newtheorem{lemma}{Lemma}[section]

\newtheorem{corollary}{Corollary}[section]

\theoremstyle{definition}
\newtheorem{definition}{Definition}[section]

\newtheorem{remark}{Remark}[section]

\setcounter{equation}{0}

\numberwithin{equation}{section}

\linespread{1.6}

\begin{document}

\title{\bf A new result for boundedness of solutions to a quasilinear higher-dimensional chemotaxis--haptotaxis model
 with nonlinear diffusion}

\author{
Jiashan Zheng 
\thanks{Corresponding author.   E-mail address:
 zhengjiashan2008@163.com (J.Zheng)}
\\
    School of Mathematics and Statistics Science,\\
     Ludong University, Yantai 264025,  P.R.China \\
}
\date{}

\maketitle \vspace{0.3cm}
\noindent
\begin{abstract}
 This paper deals with a boundary-value problem for a coupled  quasilinear chemotaxis--haptotaxis model with nonlinear diffusion
$$
 \left\{\begin{array}{ll}
  u_t=\nabla\cdot(D(u)\nabla u)-\chi\nabla\cdot(u\nabla v)-\xi
  \nabla\cdot(u\nabla w)+\mu u(1-u-w),\\
 \disp{v_t=\Delta v- v +u},\quad
\\
\disp{w_t=- vw},\quad\\
 \end{array}\right.
$$
in $N$-dimensional smoothly bounded domains, where the parameters $\xi ,\chi> 0$,
$\mu> 0$. The diffusivity $D(u)$ is assumed to satisfy  $D(u)\geq C_{D}u^{m-1}$ for all $u > 0$ with
some  $C_D>0$.
Relying on a new energy inequality, in this paper,
it is proved that under the conditions
$$m>\frac{2N}{N+{{{\frac{(\frac{\max_{s\geq1}\lambda_0^{\frac{1}{{{s}}+1}}
(\chi+\xi\|w_0\|_{L^\infty(\Omega)})}{(\max_{s\geq1}\lambda_0^{\frac{1}{{{s}}+1}}(\chi+\xi\|w_0\|_{L^\infty(\Omega)})-\mu)_{+}}+1)
(N+\frac{\max_{s\geq1}\lambda_0^{\frac{1}{{{s}}+1}}(\chi+\xi\|w_0\|_{L^\infty(\Omega)})}{(\max_{s\geq1}\lambda_0^{\frac{1}{{{s}}+1}}
(\chi+\xi\|w_0\|_{L^\infty(\Omega)})-\mu)_{+}}-1)}{N}}}}},$$
and proper regularity hypotheses on the initial data, the corresponding initial-boundary problem possesses
at least one global bounded classical solution when $D(0) > 0$ (the case of non-degenerate diffusion), while if, $D(0)\geq 0$ (the case of possibly degenerate
diffusion), the existence of bounded weak solutions for system is shown. This extends some recent
results  by several authors.
\end{abstract}

\vspace{0.3cm}
\noindent {\bf\em Key words:}~Boundedness;
Chemotaxis--haptotaxis;
Global existence;
Logistic source

\noindent {\bf\em 2010 Mathematics Subject Classification}:~  92C17, 35K55,
35K59, 35K20

\newpage
\section{Introduction}
Cancer invasion is a very complex process which involves various biological mechanisms (see \cite{Bellomo,Horstmann2710,Chaplain3,Chaplain1,Friedman,Hillen79}).
Chemotaxis is the oriented movement of cells along concentration gradients of chemicals produced by the cells themselves or in their environment, and
is a significant mechanism of directional migration of cells.  A well-known chemotaxis model was proposed by
Keller and Segel (\cite{Keller79,Keller791}) in the 1970s, which describes the aggregation processes of the cellular
slime mold Dictyostelium discoideum.
 Since then, the following quasi-chemotaxis-only model
 \begin{equation}
 \left\{\begin{array}{ll}
  u_t=\Delta u-\chi\nabla\cdot(u\nabla v)+\mu u(1-u),\quad
x\in \Omega, t>0,\\
 \disp{  v_t=\Delta v +u- v},\quad
x\in \Omega, t>0\\
 \end{array}\right.\label{7101fhhssdsddd.2x1}
\end{equation}
  and its variations have  been widely studied by many authors, where the main issue of the investigation was
whether the solutions to the models are bounded or blow-up (see e.g., Herrero and  Vel\'{a}zquez \cite{Herrero710}, Nagai et al. \cite{Nagaixcdf791},
Winkler et al. \cite{Winkler792,Winkler793},  the survey  \cite{Bellomo1216}). For example, 
 as we all   known that all solutions of \dref{7101fhhssdsddd.2x1} are global in time
and bounded when either $N\geq 3$ and $\mu> 0$ is sufficiently large (see \cite{Winkler37103} and also \cite{Zhengssdddssddddkkllssssssssdefr23}), or $N= 2$ and $\mu> 0$
is arbitrary (\cite{Osakix391}).
Tello and  Winkler (\cite{Tello710})   proved that
the global boundedness for parabolic-elliptic  chemotaxis-only system
 \dref{7101fhhssdsddd.2x1} (the second equation of \dref{7101fhhssdsddd.2x1} is replaced by $-\Delta v+v =u$)
 exists
 under the condition $\mu > \frac{(N-2)^+}{N}\chi$, moreover,  they gave the  weak solutions
 for arbitrary small $\mu > 0$. Some recent
studies show that nonlinear chemotactic sensitivity functions (\cite{Calvez710,Horstmann791,Hillen5662710}), nonlinear
diffusion (\cite{Kowalczyk7101,Cie72,Sugiyama710}), or also logistic dampening (\cite{Osakix391,Tello710,Winkler79,Winkler37103}) may
prevent blow-up of solutions.

 One important extension of the classical Keller-Segel model to a more complex cell migration
mechanism was proposed by Chaplain and Lolas (\cite{Chaplain1,Chaplain7}) in order to describe processes of cancer
invasion. In fact, let $u = u(x,t)$ denote
the density of the tumour cell population, $v = v(x,t)$ represent the concentration
of a matrix-degrading enzyme (MDE) and  $w = w(x,t)$ stand for the density
of the surrounding tissue  (extracellular matrix (ECM)). Then
Chaplain and Lolas (\cite{Chaplain7}) introduced the following chemotaxis-haptotaxis system as a model
describing the process of cancer invasion
\begin{equation}
 \left\{\begin{array}{ll}
  u_t=D\Delta u-\chi\nabla\cdot(u\nabla v)-\xi\nabla\cdot
  (u\nabla w)+\mu u(1-u-w),\quad
x\in \Omega, t>0,\\
 \disp{\tau v_t=\Delta v +u- v},\quad
x\in \Omega, t>0,\\
\disp{w_t=- vw+\eta w(1-u-w) },\quad
x\in \Omega, t>0,\\
 \end{array}\right.\label{7101.2x19xjjssdkkkkk3189}
\end{equation}
where %
 $\tau\in\{0,1\},$
 $\Delta=\disp{\sum_{i=1}^N\frac{\partial^2}{\partial x^2_i}}$, $\disp\frac{\partial}{\partial\nu}$ denotes the outward normal
derivative on $\partial\Omega$,  $\chi>0$ and $\xi>0$ measure the chemotactic and haptotactic
sensitivities, respectively. Here $D>0$ as well as $\mu>0$  and $\eta\geq0$ represent the random motility coefficient, the proliferation rate of the cells and the  remodeling
rate,  respectively. Model \dref{7101.2x19xjjssdkkkkk3189} and its analogue have been extensively studied up to now (see \cite{Tao2,Taox26,Taossddssd793,Tao79477,Taox26216,Tao3,Tao72,LiLiLi791,Cao,Bellomo1216,Szymaska,Zhengssdddsssddfghhhddddkkllssssssssdefr23}.
In fact, Global existence and asymptotic behavior of solutions to the haptotaxis-only system ($\chi = 0$ in the first equation of \dref{7101.2x19xjjssdkkkkk3189}) have been investigated in \cite{Stinnerddff12,Walker,Lianu1,Marciniak} and \cite{Taossddssd793} for the case $\eta= 0$ and $\eta\neq 0$, respectively.

In realistic situations,  the renewal of
the ECM occurs at much smaller timescales than its degradation (see \cite{Tao2,Perthame317,Jger317,Lianu1,Marciniak,Tao79477,Walker}). Therefore, a choice of  $\eta= 0$ on \dref{7101.2x19xjjssdkkkkk3189} seems justified (see \cite{Tao2,Perthame317,Jger317,Lianu1,Marciniak,Tao79477,Walker}). The models mentioned above described the random part of the motion of cancer cells by
linear diffusion, however, from a physical point of view migration of the cancer cells through the ECM should rather be regarded like movement
in a porous medium, and so we are led to considering the cell motility $D$ a nonlinear
function of the cancer cell density.   Inspired by the analysis, in this paper,we consider the following  chemotaxis-haptotaxis system with nonlinear diffusion
(see also \cite{Tao72,Bellomo1216,Wangscd331629})
\begin{equation}
 \left\{\begin{array}{ll}
  u_t=\nabla\cdot(D(u)\nabla u)-\chi\nabla\cdot(u\nabla v)-\xi\nabla\cdot
  (u\nabla w)+\mu u(1-u-w),\quad
x\in \Omega, t>0,\\
 \disp{v_t=\Delta v +u- v},\quad
x\in \Omega, t>0,\\
\disp{w_t=- vw },\quad
x\in \Omega, t>0,\\
 \disp{\frac{\partial u}{\partial \nu}=\frac{\partial v}{\partial \nu}=\frac{\partial w}{\partial \nu}=0},\quad
x\in \partial\Omega, t>0,\\
\disp{u(x,0)=u_0(x)},v(x,0)=v_0(x),w(x,0)=w_0(x),\quad
x\in \Omega\\
 \end{array}\right.\label{7101.2x19x3sss189}
\end{equation}
in a bounded domain $\Omega\subset R^N (N\geq1)$ with smooth boundary $\partial\Omega$. The origin of the system was proposed by
Chaplain and Lolas (\cite{Chaplain1,Chaplain7}) to describe cancer cell invasion into surrounding healthy tissue. Here we assume that $D( u )$ is a nonlinear function
and satisfies
\begin{equation}\label{9161}
D\in  C^{2}([0,\infty)) ~~\mbox{and}~~D(u)\geq C_{D}u^{m-1}~~ \mbox{for all}~ u>0
\end{equation}
with some  $C_{D}>0$ and $m > 0$.
Moreover, if,
 $D ( u )$  fulfills
\begin{equation}\label{91ssdd61}
D ( u ) > 0 ~~ \mbox{for all}~ u\geq0,
\end{equation}
so the diffusion is nondegenerate and the solutions may be considered in the sense of classical.
Throughout this paper, the initial data $(u_0,v_0,w_0)$ are assumed
that for some $\vartheta\in(0,1)$
 \begin{equation}\label{x1.731426677gg}
\left\{
\begin{array}{ll}
\displaystyle{u_0\in C(\bar{\Omega})~~\mbox{with}~~u_0\geq0~~\mbox{in}~~\Omega~~\mbox{and}~~u_0\not\equiv0},\\
\displaystyle{v_0\in W^{1,\infty}(\Omega)~~\mbox{with}~~v_0\geq0~~\mbox{in}~~\Omega},\\
\displaystyle{w_0\in C^{2+\vartheta}(\bar{\Omega})~~\mbox{with}~~w_0>0~~\mbox{in}~~\bar{\Omega}~~\mbox{and}~~\frac{\partial w_0}{\partial\nu}=0~~\mbox{on}~~\partial\Omega.} ~~~~~~~~~~~~~~~~~~~~~~~~~~~~~~~~~~~~~~~~~~~~~~~~~~~~~~~~~~~~~~~~~~~\\
\end{array}
\right.
\end{equation}
System \dref{7101.2x19x3sss189} has been widely studied by many authors, where the main issue of the investigation was
whether the solutions to the models are bounded or blow-up (see Tao-Winkler \cite{Tao72}, ).
For instance, when $D$
satisfies \dref{9161}--\dref{91ssdd61}, Tao and Winkler (\cite{Tao72}) showed that model \dref{7101.2x19x3sss189}  has global solutions provided that
$m>\max\{1,\bar{m}\}$,
where
\begin{equation}\label{dcfvgg7101.2x19x318jkl}
\bar{m}:=\left\{\begin{array}{ll}
\frac{2N^2+4N-4}{N(N+4)}~~\mbox{if}~~
N \leq 8,\\
 \frac{2N^2+3N+2-\sqrt{8N(N+1)}}{N(N+20)}~~\mbox{if}~~
N \geq 9.\\
 \end{array}\right.
\end{equation}
However, they leave a question here: ``whether the
global solutions are bounded''. If $N\geq 2$, the global boundedness of solutions to \dref{7101.2x19x3sss189} has been constructed for $m > 2-\frac{2}{N}$ (see \cite{LiLiLi791,Wangscd331629}) with the help of the boundness of $\|\nabla v\|_{L^{l}(\Omega\times(0,T))}(1\leq l<\frac{N}{N-1})$).
Recently, we  (\cite{Zhddengssdeeezseeddd0}) extended these results to the cases
$m >\frac{2N}{N+2}$ by using the boundness of $\|\nabla v\|_{L^{2}(\Omega\times(0,T))}$. More recently, if $\frac{\mu}{\chi}$ is large enough, Jin \cite{Jineeezseeddd0} (see also \cite{HuHuHueeezseeddd0}) proved that system \dref{7101.2x19x3sss189} admits a
 global bounded  solution for  any $m>0$. However,   we should point that the cases
$0<m\leq\frac{2N}{N+2}$ and small $\frac{\mu}{\chi}$
remain unknown even in the case for  the chemotaxis-only system \dref{7101.2x19x3sss189},
 that is, $w\equiv0$ in system \dref{7101.2x19x3sss189}. In this paper, we firstly use the boundedness  of $\int_{(t-1)_+}^t
 \int_\Omega u^{{\gamma_0+1}}$ (see Lemma \ref{qqqqlemma45630}) for some $\gamma_0>1,$ which is a new result even for chemotaxis-only system \dref{7101.2x19x3sss189}. Then, applying the  standard testing procedures,
we can  derive the uniform boundedness of  $\nabla v$ in $L^{l_0} (\Omega)$ for some $l_0>2.$
 We emphasize that the spontaneous boundedness information on $\nabla v$ in $L^{l_0} (\Omega)$  (see \dref{zjscz2.5297x9ssd630xxy}) plays a key role in this process. Using  the $L^{l_0}$-boundedness of $\nabla v$  and $L^{1}$-boundedness of $u$, we can then acquire the uniform bounds of $u$
in
arbitrary large  $L^p(\Omega)$ provided that the further restriction on $m$ is satisfied (see the proof of Lemmas \ref{lsssemma456302ssd23116}-\ref{sddlemmddffa45630}). Finally,  combining with Moser iteration
method and $L^p$-$L^q$ estimates for Neumann heat semigroup, we finally established the $L^\infty$ bound of $u$ (see Lemmas \ref{lemdffmddffa45630}--\ref{lemdffssddmddffa45630}).


 Motivated by the above works, this paper will focus on studying the relationship between
the exponent $m$ and the global existence of solutions to  chemotaxis-haptotaxis model \dref{7101.2x19x3sss189}  with nonlinear diffusion.
In fact,
  the aim of the present paper is to
   study the quasilinear chemotaxis system \dref{7101.2x19x3sss189} under the conditions \dref{9161}--\dref{91ssdd61}.
    For non-degenerate and
degenerate diffusion both, we will show the existence of global-in-time solutions to system \dref{7101.2x19x3sss189}
that are uniformly bounded. The main results are as follows.
\begin{theorem}\label{theorem3}
Let $\Omega\subset R^N (N\geq1)$ be a bounded domain with smooth boundary
and  $\chi>0,\xi>0,\mu>0$.
Assume that the nonnegative initial data $(u_0 ,v_0,w_0)$
fulfill \dref{x1.731426677gg}. Moreover, if $D$ satisfies \dref{9161}-\dref{91ssdd61} with
 \begin{equation}\label{7101.2ssddffx19x3sss189}
 m>\frac{2N}{N+{{{\frac{(\frac{\max_{s\geq1}\lambda_0^{\frac{1}{{{s}}+1}}(\chi+\xi\|w_0\|_{L^\infty(\Omega)})}{(\max_{s\geq1}
 \lambda_0^{\frac{1}{{{s}}+1}}(\chi+\xi\|w_0\|_{L^\infty(\Omega)})-\mu)_{+}}+1)(N+\frac{\max_{s\geq1}\lambda_0^{\frac{1}{{{s}}+1}}
 (\chi+\xi\|w_0\|_{L^\infty(\Omega)})}{(\max_{s\geq1}\lambda_0^{\frac{1}{{{s}}+1}}(\chi+\xi\|w_0\|_{L^\infty(\Omega)})-\mu)_{+}}-1)}{N}}}}},
 \end{equation}
%
%
%
%
then there exists a triple $(u,v,w)\in (C^0(\bar{\Omega}\times[0,\infty))\cap C^{2,1}
(\bar{\Omega}\times(0,\infty)))^3$ which solves \dref{7101.2x19x3sss189} in the classical sense.
Moreover, both $u$, $v$  and $w$ are bounded in $\Omega\times(0,\infty)$, that is, there exists a positive constant $C$ such that
\begin{equation}\label{916sdfff1}
\|u(\cdot, t)\|_{L^\infty(\Omega)}+\|v(\cdot, t)\|_{W^{1,\infty}(\Omega)}+\|w(\cdot, t)\|_{L^\infty(\Omega)}\leq C~~~\mbox{for all}~~t>0.
\end{equation}
\end{theorem}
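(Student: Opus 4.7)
The plan is to combine local existence theory for the quasilinear system with a chain of a priori estimates that ultimately upgrades a weak integrability of $u$ to a uniform $L^\infty$ bound. First I would invoke standard fixed-point arguments to produce a classical solution on a maximal interval $[0,T_{\max})$, together with the usual extensibility criterion asserting that $T_{\max}=\infty$ provided $\|u(\cdot,t)\|_{L^\infty(\Omega)}$ stays bounded. Basic auxiliary bounds come for free: integrating the first equation against $1$ and using the logistic dampening yields $\int_\Omega u\leq C$, while the ODE $w_t=-vw$ with $v\geq 0$ gives the pointwise estimate $0<w\leq\|w_0\|_{L^\infty(\Omega)}$, and a space-differentiated version of the $w$-equation handles $\nabla w$.

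The backbone of the argument is the new energy-type inequality promised in the introduction, which produces the space-time bound
\[
\int_{(t-1)_+}^{t}\!\int_\Omega u^{\gamma_0+1}\leq C,\qquad t>0,
\]
for some $\gamma_0>1$. This is the first step where the precise form of \dref{7101.2ssddffx19x3sss189} manifests, since $\gamma_0$ is governed by the same interplay between $\chi$, $\xi$, $\|w_0\|_{L^\infty(\Omega)}$, $\mu$ and the spectral constant $\lambda_0$ that appears there. Feeding this integrability of $u$ into the linear parabolic equation for $v$ through $L^p$--$L^q$ estimates for the Neumann heat semigroup, I expect to promote the standard weak gradient estimate $\nabla v\in L^l$ with $l<N/(N-1)$ into a uniform bound $\|\nabla v(\cdot,t)\|_{L^{l_0}(\Omega)}\leq C$ for some $l_0>2$, exactly the key improvement that distinguishes this paper from its predecessors.

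Next I would run the standard testing procedure: multiply the first equation by $u^{p-1}$, exploit $D(u)\geq C_D u^{m-1}$ to produce a dissipative term of order $\int_\Omega u^{m+p-3}|\nabla u|^2$, and control the chemotactic and haptotactic cross terms by Young's inequality using the $L^{l_0}$ bound on $\nabla v$ and the $L^\infty$ bound on $\nabla w$. The logistic term contributes $-\mu\int_\Omega u^{p+1}$, and a Gagliardo--Nirenberg interpolation of the remaining integrals against the dissipation closes the estimate precisely when $m$ exceeds the threshold in \dref{7101.2ssddffx19x3sss189}. Iterating this across a sequence of exponents yields uniform $L^p$ bounds on $u$ for arbitrarily large $p$. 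Once $p$ is large enough, parabolic regularity for the $v$-equation upgrades $\nabla v$ to $L^\infty$, and a Moser--Alikakos iteration applied to the quasilinear $u$-equation finally produces the $L^\infty$ bound, at which point \dref{916sdfff1} and global extensibility follow.

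The main obstacle I anticipate is the third step: deriving the $L^p$ bound for arbitrary $p$ while keeping the threshold on $m$ as sharp as \dref{7101.2ssddffx19x3sss189}. The difficulty is bookkeeping -- every constant in the Gagliardo--Nirenberg--Young chain must be tracked, since the absorbability condition is what dictates the intricate exponent $\max_{s\geq 1}\lambda_0^{1/(s+1)}(\chi+\xi\|w_0\|_{L^\infty(\Omega)})/(\cdots-\mu)_+$ appearing in \dref{7101.2ssddffx19x3sss189}. In particular, the optimization over $s\geq 1$ reflects a free parameter in the choice of test exponent for the energy inequality, and maximizing over $s$ is what yields the widest admissible range of $m$. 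The degenerate case $D(0)\geq 0$ is then treated in the standard way by approximating $D$ with positive diffusivities, passing to the limit using the uniform bounds just established, and recovering a bounded weak solution.
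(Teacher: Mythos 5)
Your outline captures the correct global architecture (local existence and blow-up criterion, logistic-source $L^1$ bound, the new space-time bound $\int_{(t-1)_+}^{t}\int_\Omega u^{\gamma_0+1}\leq C$, improved integrability of $\nabla v$, $L^p$-bootstrap for $u$, Moser--Alikakos), and your identification of the space-time bound as the paper's key new ingredient is exactly right. However, there are two genuine mismatches with what actually makes the argument close at the stated threshold.

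First, the mechanism you propose for the $\nabla v$ upgrade is not the one the paper uses, and it would not reach the exponent in \dref{7101.2ssddffx19x3sss189}. You suggest feeding the $u$-integrability into the heat semigroup via $L^p$--$L^q$ estimates. But from the uniform-in-time information one actually has at that stage, namely $\|u(\cdot,t)\|_{L^{\gamma_0}(\Omega)}\leq C$, semigroup smoothing only yields $\nabla v\in L^p$ for $p<N\gamma_0/(N-\gamma_0)$, and the space-time bound on $u^{\gamma_0+1}$ does not convert cleanly through $\nabla e^{t\Delta}$ kernel estimates without reintroducing maximal regularity. The paper instead derives a differential inequality for $\|\nabla v\|_{L^{2\beta}(\Omega)}^{2\beta}$ by direct testing (Lemma~\ref{lemma45630}, the version with $\kappa_0$), interpolates via Gagliardo--Nirenberg with $L^{\gamma_0+1}$-base (itself obtained by choosing $\beta=\frac{\gamma_0+1}{2}$ and invoking the space-time bound), and lands on $\nabla v\in L^{(\gamma_0+1)(N+\gamma_0-1)/N}$. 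For $\gamma_0$ near $1$ (small $\mu$) and $N\geq 3$, the paper's exponent tends to $2$ while the semigroup exponent tends to $N/(N-1)<2$, so your route would give a strictly worse threshold on $m$ precisely in the regime the theorem is designed for.

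Second, your reading of $\max_{s\geq 1}\lambda_0^{1/(s+1)}$ is inverted: it is not a free parameter you maximize to widen the range of $m$. The constant $\lambda_0=\lambda_0(\Omega,\gamma,N)$ from maximal $L^\gamma$-regularity depends on the test exponent $k+1$; bounding $\lambda_0^{1/(k+1)}$ by its supremum over $s\geq 1$ is a worst-case step that makes the resulting smallness condition on $k$ uniform, and it \emph{shrinks} (not widens) the admissible range relative to an idealized $\lambda_0$. Finally, you would also need to supply the dimension-split argument of Lemmas~\ref{lemma456ssddfff30}--\ref{lemmaddff45ssdd630}: in the hardest regime (where $\frac{(\mu_*+1)(N+\mu_*-1)}{2N}\leq\frac{N}{2}$), the $L^k$-bound for $u$ is not obtained by a standalone bootstrap but by evolving the coupled functional $\frac1k\|u\|_{L^k}^k+\frac{1}{2\beta}\|\nabla v\|_{L^{2\beta}}^{2\beta}$ and absorbing both cross terms simultaneously; your proposal does not mention this coupling, and without it the Gagliardo--Nirenberg--Young chain does not close.
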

\begin{remark}
(i) 
Obviously,
 $$\mu_*=\frac{\max_{s\geq1}\lambda_0^{\frac{1}{{{s}}+1}}(\chi+\xi\|w_0\|_{L^\infty(\Omega)})}{[\max_{s\geq1}\lambda_0^{\frac{1}{{{s}}+1}}
 (\chi+\xi\|w_0\|_{L^\infty(\Omega)})-\mu]_{+}}>1~~(\mbox{by using}~~~ \mu>0),$$
then
 $\gamma_{**}={{{\frac{(\mu_{*}+1)(N+\mu_{*}-1)}{N}}}}=\mu_{*}+1+\frac{\mu_{*}^2-1}{N}>\mu_{*}+1>2$, hence $$\frac{2N}{N+{{{\frac{(\frac{\chi\max\{1,\lambda_0\}}{(\chi\max\{1,\lambda_0\}-\mu)_{+}}+1)(N+\frac{\chi\max\{1,\lambda_0\}}{(\chi\max\{1,\lambda_0\}-\mu)_{+}}-1)}{N}}}}}
<\frac{2N}{N+2}\leq2-\frac{2}{N},$$
therefore,
Theorem \ref{theorem3}
extends the results of  Theorem 1.1  of Zheng (\cite{Zhddengssdeeezseeddd0}), the results of Theorem 1.1   of Wang (\cite{Wangscd331629}), the results of  as well as of Li-Lankeit (\cite{LiLiLi791})
 and partly extends the results of  Theorem 1.1  of Liu et al (\cite{Liughjj791}). Here  the
 assumption $m>\frac{2N}{N+2}$ (see \cite{Zhddengssdeeezseeddd0}) or $m>2-\frac{2}{N}$ (see \cite{LiLiLi791,Liughjj791,Wangscd331629})
are intrinsically required.


(ii) Obviously, for any $N\geq1$,
$$
\frac{2N}{N+{{{\frac{(\frac{\chi\max\{1,\lambda_0\}}{(\chi\max\{1,\lambda_0\}-\mu)_{+}}+1)(N+\frac{\chi\max\{1,\lambda_0\}}
{(\chi\max\{1,\lambda_0\}-\mu)_{+}}-1)}{N}}}}}
<\bar{m},$$ therefore, Theorem \ref{theorem3} extends the results of  Corollary 1.2  of   Tao and Winkler (\cite{Tao72}), who showed the
global existence
of solutions
 the cases  $m>\bar{m},$ where $\bar{m}$ is given by \dref{dcfvgg7101.2x19x318jkl}.


 (iii) In the case $N= 2$, by using $\mu>0,$ then $\frac{8}{4+(\mu_*+1)^2}< 1$, our result improves the result of \cite{Taox3201} and \cite{zhengjjkk}, in which the
 assumption $m=1$ or $m>1$
are intrinsically required.


 (iv)
If $\mu>\max_{s\geq1}
 \lambda_0^{\frac{1}{{{s}}+1}}(\chi+\xi\|w_0\|_{L^\infty(\Omega)})$, then by \dref{7101.2ssddffx19x3sss189}, we derive that for any $m>0,$ system \dref{7101.2x19x3sss189} has
a classical and bounded  solution, which improves the result of  \cite{Jineeezseeddd0} as well as  \cite{HuHuHueeezseeddd0} and \cite{Cao}.

 (v)  The chemotaxis-haptotaxis system therefore has bounded solutions under the
same condition on $m$ as the pure chemotaxis system with  $w\equiv 0$ without logistic source (see \cite{Tao794}).
For  $\mu= 0$ this condition is essentially optimal (\cite{Winkler79}).

\end{remark}

In the case of possibly degenerate diffusion, system
\dref{7101.2x19x3sss189} admits  at least one global bounded weak solution:

\begin{theorem}\label{theossdrem3}
Let $\Omega\subset R^N (N\geq1)$ be a bounded domain with smooth boundary
and  $\chi>0,\xi>0,\mu>0$.
Suppose that the initial data   $(u_0 ,v_0,w_0)$
satisfy  \dref{x1.731426677gg}. Moreover, if $D$ satisfies \dref{9161} with
  $$m>\frac{2N}{N+{{{\frac{(\frac{\max_{s\geq1}\lambda_0^{\frac{1}{{{s}}+1}}(\chi+\xi\|w_0\|_{L^\infty(\Omega)})}{(\max_{s\geq1}\lambda_0^{\frac{1}{{{s}}+1}}(\chi+\xi\|w_0\|_{L^\infty(\Omega)})-\mu)_{+}}+1)(N+\frac{\max_{s\geq1}\lambda_0^{\frac{1}{{{s}}+1}}(\chi+\xi\|w_0\|_{L^\infty(\Omega)})}{(\max_{s\geq1}\lambda_0^{\frac{1}{{{s}}+1}}(\chi+\xi\|w_0\|_{L^\infty(\Omega)})-\mu)_{+}}-1)}{N}}}}},$$
%
%
%
%
then system \dref{7101.2x19x3sss189} admits at
least one global weak solution $(u,v,w)$ in the sense of definition \ref{df1} below that exists globally in time and is bounded in the sense that
\dref{916sdfff1} holds.
%
%
\end{theorem}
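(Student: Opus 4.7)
The plan is to approximate the possibly degenerate diffusion by a family of non-degenerate ones, invoke Theorem~\ref{theorem3} to obtain classical solutions of the approximating problems together with $\varepsilon$-independent bounds, and then pass to the limit in a suitable weak sense.

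First, for $\varepsilon\in(0,1)$ I would set $D_\varepsilon(u):=D(u)+\varepsilon$, so that $D_\varepsilon\in C^2([0,\infty))$, $D_\varepsilon(u)\geq C_D u^{m-1}$ and $D_\varepsilon(0)\geq\varepsilon>0$. The resulting regularized problem satisfies both \dref{9161} and \dref{91ssdd61} with the \emph{same} constants $C_D$ and $m$, so the smallness exponent in \dref{7101.2ssddffx19x3sss189} is unchanged. Hence Theorem~\ref{theorem3} produces a global classical solution $(u_\varepsilon,v_\varepsilon,w_\varepsilon)$ and, crucially, yields the bound \dref{916sdfff1} with a constant that does not depend on $\varepsilon$, because the proof of Theorem~\ref{theorem3} only uses $C_D$, $m$, the initial data, $\chi,\xi,\mu,\Omega,N$ and, through the energy inequality, quantities controlled by $\|w_0\|_{L^\infty(\Omega)}$.

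Second, I would upgrade these $L^\infty$ bounds to the regularity required for compactness. Testing the first equation against $u_\varepsilon$, together with the $\varepsilon$-independent bounds on $u_\varepsilon$, $v_\varepsilon$ and $\nabla v_\varepsilon$, provides bounds on $\int_0^T\int_\Omega D(u_\varepsilon)|\nabla u_\varepsilon|^2$ and hence on $\nabla u_\varepsilon^{(m+1)/2}$ in $L^2$. Parabolic regularity for the $v$-equation yields bounds on $v_\varepsilon$ in $L^2(0,T;W^{2,2}(\Omega))\cap W^{1,2}(0,T;L^2(\Omega))$, while the ODE $w_{\varepsilon,t}=-v_\varepsilon w_\varepsilon$ combined with the $L^\infty$ bound on $v_\varepsilon$ controls $\nabla w_\varepsilon$ and $w_{\varepsilon,t}$ uniformly. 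Testing the first equation by smooth functions gives a bound on $\partial_t u_\varepsilon$ in $L^1(0,T;(W^{k,2}(\Omega))^\ast)$ for $k$ sufficiently large.

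Third, I would apply an Aubin--Lions argument: the bounds above yield $u_\varepsilon\to u$ almost everywhere and strongly in $L^p(\Omega\times(0,T))$ for every finite $p$ along a subsequence; simultaneously $\nabla v_\varepsilon\rightharpoonup\nabla v$ in $L^2$, $v_\varepsilon\to v$ strongly in $L^2(0,T;W^{1,2}(\Omega))$, and $w_\varepsilon\to w$ uniformly (via Arzel\`a--Ascoli applied to the explicit ODE representation of $w_\varepsilon$). Writing the diffusion in divergence form $D(u_\varepsilon)\nabla u_\varepsilon=\nabla\Phi(u_\varepsilon)$ with $\Phi(s):=\int_0^s D(\sigma)\,d\sigma$, strong convergence of $u_\varepsilon$ together with the a.e.\ convergence of $\nabla\Phi(u_\varepsilon)$ to $\nabla\Phi(u)$ (which is forced by the uniform bound on $\nabla u_\varepsilon^{(m+1)/2}$) allows identification of the limit. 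The cross-terms $u_\varepsilon\nabla v_\varepsilon$ and $u_\varepsilon\nabla w_\varepsilon$ are handled by combining strong $L^p$ convergence of $u_\varepsilon$ with weak $L^2$ convergence of the gradients.

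The main obstacle will be the identification of the nonlinear diffusive flux in the degenerate regime, since when $D(0)=0$ one has no pointwise control on $\nabla u_\varepsilon$ on the set $\{u_\varepsilon=0\}$; the remedy is precisely to work with $\nabla\Phi(u_\varepsilon)$ and to exploit the bound on $\nabla u_\varepsilon^{(m+1)/2}$ obtained in the previous step. With these ingredients the weak formulation of Definition~\ref{df1} passes to the limit, and the uniform bound \dref{916sdfff1} is inherited by $(u,v,w)$ from its $\varepsilon$-uniform counterpart via weak-$\ast$ lower semicontinuity of the relevant norms.
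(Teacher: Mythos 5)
Your proposal follows essentially the same strategy as the paper: approximate the possibly degenerate $D$ by a non-degenerate family $D_\varepsilon$, invoke Theorem~\ref{theorem3} to get classical solutions of the regularized system with $\varepsilon$-independent $L^\infty$ bounds, obtain space-time derivative estimates, extract a convergent subsequence via Aubin--Lions, and identify the diffusive flux through the antiderivative $\Phi=H$. The key observation that the bound \dref{916sdfff1} from Theorem~\ref{theorem3} is independent of $\varepsilon$, because the proof uses only $C_D$, $m$, and the initial data, is exactly what the paper relies on (Lemma~\ref{lemma45630hhuujjuu}).

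One technical point is treated a bit loosely in your step three. You invoke Aubin--Lions directly for $u_\varepsilon$, but you only have a uniform bound on $\nabla u_\varepsilon^{(m+1)/2}$ in $L^2$, not on $\nabla u_\varepsilon$ itself (these differ when $m>1$, especially near the degenerate set $\{u_\varepsilon=0\}$). The paper addresses this by applying Aubin--Lions to $u_\varepsilon^\varsigma$ for a suitable exponent $\varsigma>m$ with $\varsigma\geq 2(m-1)$ (Lemmas~\ref{lemma45630hhuujjuuyytt}--\ref{lemma45630223}): the energy estimate controls $\nabla u_\varepsilon^\varsigma$ in $L^2$, the dual estimate controls $\partial_t u_\varepsilon^\varsigma$ in $L^1((0,T);(W^{2,q}(\Omega))^*)$, Aubin--Lions gives strong $L^2$ compactness of $u_\varepsilon^\varsigma$, and the Egorov theorem then yields a.e.\ convergence of $u_\varepsilon$. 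You do acknowledge the relevance of the $\nabla u_\varepsilon^{(m+1)/2}$ bound in the limit-identification step, so the idea is present; but to make the compactness rigorous you should run the Aubin--Lions argument on a power of $u_\varepsilon$ rather than on $u_\varepsilon$ itself, as the paper does. This is a refinement of detail rather than a change of approach, and the rest of the argument, including the convergence of $H_\varepsilon(u_\varepsilon)\to H(u)$ (trivial for your choice $D_\varepsilon=D+\varepsilon$, since $H_\varepsilon(u_\varepsilon)=H(u_\varepsilon)+\varepsilon u_\varepsilon$), the handling of the cross-terms $u_\varepsilon\nabla v_\varepsilon$, $u_\varepsilon\nabla w_\varepsilon$, and the ODE treatment of $w_\varepsilon$, mirrors the paper faithfully.
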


%

The rest of this paper is organized as follows. In the following section,  we  recall some preliminary results.  Section 3 is devoted to a series of a priori estimates and then prove Theorem \ref{theorem3}.
In Section 4, applying   the existence of classical solutions in the non-degenerate case,  we will
then complete the proof of theorem \ref{theossdrem3} 
 by
an approximation procedure in Section 3.

\section{Preliminaries and  main results}

Before proving our main results, we will give some preliminary
lemmas, which play a crucial role in the following proofs. As for
the proofs of these lemmas, here we will not repeat them again.

\begin{lemma}(\cite{Hajaiej,Ishida})\label{lemma41ffgg}
Let  $s\geq1$ and $q\geq1$.
Assume that $p >0$ and $a\in(0,1)$ satisfy
$$\frac{1}{2}-\frac{p}{N}=(1-a)\frac{q}{s}+a(\frac{1}{2}-\frac{1}{N})~~\mbox{and}~~p\leq a.$$
Then there exist $c_0, c'_0 >0$ such that for all $u\in W^{1,2}(\Omega)\cap L^{\frac{s}{q}}(\Omega)$,
$$\|u\|_{W^{p,2}(\Omega)} \leq c_{0}\|\nabla u\|_{L^{2}(\Omega)}^{a}\|u\|^{1-a}_{L^{\frac{s}{q}}(\Omega)}+c'_0\|u\|_{L^{\frac{s}{q}}(\Omega)}.$$
%
\end{lemma}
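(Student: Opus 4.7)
The statement is a fractional Gagliardo--Nirenberg interpolation inequality on a smoothly bounded domain, and the plan is to derive it by lifting everything to the whole space via a Sobolev extension operator and then invoking the corresponding whole-space inequality. First, since $\Omega$ is smoothly bounded, I would fix a Stein-type extension operator $E\colon W^{1,2}(\Omega)\to W^{1,2}(\mathbb{R}^N)$ that is simultaneously bounded as an operator $L^r(\Omega)\to L^r(\mathbb{R}^N)$ for every $r\in[1,\infty]$. By real interpolation between these two endpoints, $E$ is then also bounded from $W^{p,2}(\Omega)$ into $W^{p,2}(\mathbb{R}^N)$ for each $p\in(0,1)$, and trivially from $L^{s/q}(\Omega)$ into $L^{s/q}(\mathbb{R}^N)$.

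On the whole space, the fractional Sobolev space $W^{p,2}(\mathbb{R}^N)=H^p(\mathbb{R}^N)$ is described by the Bessel potential $(1-\Delta)^{p/2}$ and, equivalently, by a Littlewood--Paley decomposition. Starting from this description, a classical argument that splits an arbitrary function $v$ into its low- and high-frequency components at a scale $\lambda>0$, bounds each piece by Bernstein's inequality, and then optimizes in $\lambda$, yields the whole-space fractional Gagliardo--Nirenberg estimate
\begin{equation*}
\|v\|_{W^{p,2}(\mathbb{R}^N)}\leq c\,\|\nabla v\|_{L^{2}(\mathbb{R}^N)}^{a}\,\|v\|_{L^{s/q}(\mathbb{R}^N)}^{1-a},
\end{equation*}
valid precisely under the scaling identity and the admissibility restriction $p\leq a$ stated in the lemma. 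It is at this step that the constraint $p\leq a$ enters: it guarantees that the interpolation between the endpoint information $\nabla v\in L^2$ and $v\in L^{s/q}$ can actually produce a fractional derivative of order $p$.

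Next, I would apply this whole-space estimate to $v=Eu$ and use the boundedness of the extension operator in the relevant spaces to translate it back to $\Omega$. Since $E$ is a Sobolev extension, one has $\|\nabla(Eu)\|_{L^{2}(\mathbb{R}^N)}\leq C\bigl(\|\nabla u\|_{L^{2}(\Omega)}+\|u\|_{L^{2}(\Omega)}\bigr)$ and $\|Eu\|_{L^{s/q}(\mathbb{R}^N)}\leq C\|u\|_{L^{s/q}(\Omega)}$. Expanding the $a$-th power of the sum that appears from the gradient term via the elementary inequality $(x+y)^{a}\leq x^{a}+y^{a}$ (when $a\leq 1$) and controlling the residual $\|u\|_{L^{2}(\Omega)}$ by $\|u\|_{L^{s/q}(\Omega)}$ through Hölder's inequality on the bounded domain $\Omega$ (together with a small-parameter Young inequality absorbing any remaining fractional norm on the left) produces exactly the additive correction $c_0'\|u\|_{L^{s/q}(\Omega)}$ that appears in the statement.

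The main obstacle is the careful bookkeeping of the scaling and the extra zero-order contribution coming from the extension step. One has to verify that the admissibility bound $p\leq a$ in the statement matches precisely the range of validity of the whole-space inequality invoked, and that all lower-order boundary-induced terms can be collected into the single additive correction $c_0'\|u\|_{L^{s/q}(\Omega)}$ on the right-hand side; this last point is where the boundedness of $\Omega$ is essential. Since the lemma is attributed to Hajaiej and Ishida, the fully quantitative verification is given in those references, and my plan is simply to retrace their strategy.
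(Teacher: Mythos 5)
The paper itself offers no proof of this lemma: it is quoted verbatim from the cited sources (Hajaiej--Molinet--Ozawa--Wang for the whole-space fractional Gagliardo--Nirenberg inequality and Ishida--Seki--Yokota for its bounded-domain version), and the author explicitly states that the proofs of the preliminary lemmas are not repeated. Your overall strategy -- Stein extension, whole-space fractional interpolation via Littlewood--Paley/Bernstein, then transfer back to $\Omega$ with the lower-order terms collected into the additive correction $c_0'\|u\|_{L^{s/q}(\Omega)}$ -- is indeed the route taken in those references, so at the level of architecture there is nothing to object to.

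There are, however, two concrete gaps in the way you propose to close the argument. First, you control the residual $\|u\|_{L^{2}(\Omega)}$ coming from the extension by $\|u\|_{L^{s/q}(\Omega)}$ ``through H\"older's inequality on the bounded domain.'' H\"older on a bounded domain bounds the \emph{smaller} exponent by the \emph{larger} one, so this step only works when $s/q\geq 2$; in the hypotheses of the lemma (and in the way it is actually used later in the paper, with the norm $L^{l/\beta}(\Omega)$, $l<\frac{N}{N-1}$ and $\beta>1$) one typically has $s/q<2$, and often $s/q<1$. In that range you must instead interpolate $\|u\|_{L^{2}(\Omega)}$ between $\|\nabla u\|_{L^{2}(\Omega)}$ (or the $W^{p,2}$ norm) and $\|u\|_{L^{s/q}(\Omega)}$ and then absorb by Young, which is exactly the delicate bookkeeping the quoted references carry out; the single sentence about ``a small-parameter Young inequality'' does not yet do this. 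Second, the hypotheses $s\geq1$, $q\geq1$ allow $s/q<1$, so $L^{s/q}(\Omega)$ is only a quasi-normed space; the boundedness of a linear (Stein-type) extension operator on $L^{r}$ for $r<1$, and the interpolation argument you invoke to get $E\colon W^{p,2}(\Omega)\to W^{p,2}(\mathbb{R}^N)$, are not standard facts in that range and need separate justification (this sub-unit range is precisely the novelty of the Hajaiej et al.\ framework). As written, your sketch is therefore sound only for $s/q\geq 2$, which excludes the regime in which the lemma is actually applied in this paper.
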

\begin{lemma}(\cite{Zheng})\label{lemma41}
Let $0<{\theta}\leq p\leq\frac{2N}{N-2}$.
There exists a positive constant $C_{GN}$ such that for all $u \in W^{1,2}(\Omega)\cap L^{{\theta}}(\Omega)$,
$$\|u\|_{L^p(\Omega)} \leq C_{GN}(\|\nabla u\|_{L^{2}(\Omega)}^{a}\|u\|^{1-a}_{L^{{\theta}}(\Omega)}+\|u\|_{L^{{\theta}}(\Omega)})$$
is valid with 
$a =\disp{\frac{\frac{N}{{{\theta}}}-\frac{N}{p}}{1-\frac{N}{2}+\frac{N}{{\theta}}}}\in(0,1)$.
%
\end{lemma}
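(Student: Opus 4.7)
The plan is to derive the inequality by composing the Sobolev embedding $W^{1,2}(\Omega)\hookrightarrow L^{q}(\Omega)$ with $q=2N/(N-2)$, Lyapunov (Hölder) interpolation in the Lebesgue scale, and one absorption step via Young's inequality. As a preliminary, I would verify that the exponent $a$ stated in the lemma is exactly the convex-combination coefficient produced by interpolating between $L^\theta$ and $L^q$: writing $\frac{1}{p}=\frac{1-a}{\theta}+\frac{a}{q}$ and rearranging gives $a=(N/\theta-N/p)/(1-N/2+N/\theta)$, and the hypothesis $\theta\leq p\leq q$ guarantees $a\in[0,1]$, with $a\in(0,1)$ in the non-trivial range.

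Second, Lyapunov interpolation followed by the Sobolev embedding on the smooth bounded domain $\Omega$ yields
\[
\|u\|_{L^p(\Omega)} \leq \|u\|_{L^\theta(\Omega)}^{1-a}\,\|u\|_{L^{q}(\Omega)}^{a} \leq C\,\|u\|_{L^\theta(\Omega)}^{1-a}\,\bigl(\|\nabla u\|_{L^2(\Omega)}+\|u\|_{L^2(\Omega)}\bigr)^{a}.
\]
Using $(x+y)^a\leq x^a+y^a$ for $a\in(0,1]$, this splits into
\[
\|u\|_{L^p(\Omega)} \leq C\,\|\nabla u\|_{L^2(\Omega)}^{a}\|u\|_{L^\theta(\Omega)}^{1-a} + C\,\|u\|_{L^2(\Omega)}^{a}\|u\|_{L^\theta(\Omega)}^{1-a},
\]
so only the $L^2$-summand remains to be absorbed. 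When $\theta\geq 2$, Hölder on the finite-measure set $\Omega$ immediately yields $\|u\|_{L^2(\Omega)}\leq |\Omega|^{1/2-1/\theta}\|u\|_{L^\theta(\Omega)}$, bounding the unwanted term by $C\|u\|_{L^\theta(\Omega)}$ and closing the estimate.

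When $\theta<2$, I would pick $r\in\{p,q\}$ such that $\theta\leq 2\leq r$ (possible since $q\geq 2$ for $N\geq 3$) and apply a second Lyapunov interpolation $\|u\|_{L^2}\leq \|u\|_{L^\theta}^{1-b}\|u\|_{L^{r}}^{b}$ with an appropriate $b\in(0,1)$; substituting back produces a term $C\|u\|_{L^\theta(\Omega)}^{1-ab}\|u\|_{L^{r}(\Omega)}^{ab}$, which, after bounding $\|u\|_{L^q}$ once more by $\|\nabla u\|_{L^2}+\|u\|_{L^2}$ and collecting, is absorbed by Young's inequality into the left-hand side modulo a lower-order $C\|u\|_{L^\theta(\Omega)}$ remainder. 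The main obstacle is precisely this absorption step: one must verify $ab<1$ so that Young applies, which is where the full strength of $p\leq q$ enters essentially, not only for the first interpolation but also for the second. The low-dimensional cases $N\in\{1,2\}$ are handled analogously with $q$ replaced by any sufficiently large finite exponent, since $W^{1,2}(\Omega)\hookrightarrow L^{q'}(\Omega)$ for all $q'<\infty$.
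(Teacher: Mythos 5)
The paper never proves this lemma --- it is quoted from (\cite{Zheng}) as a known Gagliardo--Nirenberg-type inequality, with the explicit remark that proofs of the preliminary lemmas are not repeated --- so the only question is whether your composition argument actually delivers the stated estimate. For $N\geq 3$ it essentially does: since $\frac{1}{q}=\frac{1}{2}-\frac{1}{N}$ for $q=\frac{2N}{N-2}$, the Lyapunov coefficient between $L^{\theta}$ and $L^{q}$ is exactly the $a$ of the lemma, and Sobolev embedding plus $(x+y)^a\leq x^a+y^a$ reduces the problem to removing the term $\|u\|_{L^2}^{a}\|u\|_{L^\theta}^{1-a}$. Your $\theta\geq 2$ case is fine, and for $\theta<2$, $p\geq 2$ the absorption into $\|u\|_{L^p}$ works once you note explicitly that $\|u\|_{L^p}<\infty$ (from $u\in W^{1,2}\cap L^{\theta}\subset L^{q}\cap L^{\theta}\subset L^{p}$), so subtracting is legitimate. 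In the branch $p<2$, however, the step as written is circular: bounding $\|u\|_{L^q}^{ab}$ by $\bigl(\|\nabla u\|_{L^2}+\|u\|_{L^2}\bigr)^{ab}$ reintroduces the $L^2$ norm you are trying to eliminate, and there is no $\|u\|_{L^q}$ on the left-hand side to absorb into. The standard repair is to absorb at the level of the Sobolev inequality itself: from $\|u\|_{L^2}\leq\|u\|_{L^\theta}^{1-b}\|u\|_{L^q}^{b}\leq\varepsilon\|u\|_{L^q}+C_\varepsilon\|u\|_{L^\theta}$ and $\|u\|_{L^q}<\infty$ one gets $\|u\|_{L^q}\leq C(\|\nabla u\|_{L^2}+\|u\|_{L^\theta})$, which, inserted into the first interpolation, gives the claim at once and makes your case distinction unnecessary.

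The genuine gap is your final sentence on $N\in\{1,2\}$. Replacing $q$ by a large finite exponent $q'$ changes the interpolation coefficient: the $a'$ determined by $\frac{1}{p}=\frac{1-a'}{\theta}+\frac{a'}{q'}$ is strictly larger than the $a$ in the lemma. For $N=2$ the stated value is $a=1-\frac{\theta}{p}$, which corresponds to $q'=\infty$, and $W^{1,2}(\Omega)\not\hookrightarrow L^\infty(\Omega)$ in two dimensions; for $N=1$ the stated $a=\frac{1/\theta-1/p}{1/2+1/\theta}$ is even smaller than the coefficient produced by $W^{1,2}\hookrightarrow L^{\infty}$. Since a larger power on $\|\nabla u\|_{L^2}$ yields a weaker inequality whenever $\|\nabla u\|_{L^2}\geq\|u\|_{L^\theta}$, the inequality you obtain in low dimensions does not imply the one stated, so these cases cannot be dismissed as ``analogous.'' They matter for this paper: the lemma is invoked with precisely this $a$ for general $N\geq1$, in particular in the two-dimensional boundedness argument. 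For $N\leq 2$ one needs the genuine Gagliardo--Nirenberg interpolation inequality (as in the cited reference), not a Sobolev embedding into a fixed Lebesgue space composed with H\"older interpolation.
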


\begin{lemma}\label{lemma45xy1222232} (\cite{Hieber,Zhenddddgssddsddfff00})
Suppose  that $\gamma\in (1,+\infty)$ and $g\in L^\gamma((0, T); L^\gamma(
\Omega))$.
 Consider the following evolution equation
 $$
 \left\{\begin{array}{ll}
v_t -\Delta v+v=g,~~~(x, t)\in
 \Omega\times(0, T ),\\
\disp\frac{\partial v}{\partial \nu}=0,~~~(x, t)\in
 \partial\Omega\times(0, T ),\\
v(x,0)=v_0(x),~~~(x, t)\in
 \Omega.\\
 \end{array}\right.
 $$
 For each $v_0\in W^{2,\gamma}(\Omega)$
such that $\disp\frac{\partial v_0}{\partial \nu}=0$, there exists a unique solution
$v\in W^{1,\gamma}((0,T);L^\gamma(\Omega))\cap L^{\gamma}((0,T);W^{2,\gamma}(\Omega)).$ In addition, if $s_0\in[0,T)$, $v(\cdot,s_0)\in W^{2,\gamma}(\Omega)(\gamma>N)$ with $\disp\frac{\partial v(\cdot,s_0)}{\partial \nu}=0,$
then there exists a positive constant $\lambda_0:=\lambda_0(\Omega,\gamma,N)$ such that  
$$
\begin{array}{rl}
&\disp{\int_{s_0}^Te^{\gamma s}\| v(\cdot,t)\|^{\gamma}_{W^{2,\gamma}(\Omega)}ds\leq\lambda_0\left(\int_{s_0}^Te^{\gamma s}
\|g(\cdot,s)\|^{\gamma}_{L^{\gamma}(\Omega)}ds+e^{\gamma s_0}(\|v_0(\cdot,s_0)\|^{\gamma}_{W^{2,\gamma}(\Omega)})\right).}\\
\end{array}
$$
\end{lemma}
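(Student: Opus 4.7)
\medskip
\noindent\textbf{Proof plan for Lemma 2.3.}

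The plan is to reduce the claim to the classical maximal $L^{\gamma}$--regularity theorem of Hieber--Pr\"uss (or Amann) for the Neumann realization of $-\Delta$ on $L^{\gamma}(\Omega)$, via an exponential change of unknowns that absorbs both the lower-order term $+v$ and the weight $e^{\gamma s}$.

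First, for the unweighted part of the statement (existence, uniqueness, and the regularity class $W^{1,\gamma}((0,T);L^{\gamma}) \cap L^{\gamma}((0,T);W^{2,\gamma})$), I would simply invoke the standard maximal regularity for $v_{t}-\Delta v+v=g$ with homogeneous Neumann data: the operator $A:=-\Delta+I$ with domain $\{\varphi \in W^{2,\gamma}(\Omega): \partial_{\nu}\varphi=0 \text{ on } \partial\Omega\}$ generates an analytic semigroup on $L^{\gamma}(\Omega)$ which has the property of bounded imaginary powers (equivalently, it admits a bounded $H^{\infty}$-calculus), hence maximal $L^{\gamma}$--regularity on any compact interval. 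This immediately yields a unique strong solution together with the non-weighted a priori bound
\[
\int_{0}^{T}\bigl(\|v_{t}\|_{L^{\gamma}(\Omega)}^{\gamma}+\|v\|_{W^{2,\gamma}(\Omega)}^{\gamma}\bigr)\,ds
\le C\Bigl(\int_{0}^{T}\|g\|_{L^{\gamma}(\Omega)}^{\gamma}\,ds+\|v_{0}\|_{W^{2,\gamma}(\Omega)}^{\gamma}\Bigr).
\]

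Next, for the weighted estimate starting from $s_{0}$, I introduce $\tilde v(x,s):=e^{s}v(x,s)$ on $(s_{0},T)$. A direct computation gives
\[
\tilde v_{s}=e^{s}v+e^{s}v_{s}=e^{s}(v_{s}+v),\qquad \Delta\tilde v=e^{s}\Delta v,\qquad \partial_{\nu}\tilde v\big|_{\partial\Omega}=0,
\]
so multiplying the PDE $v_{s}-\Delta v+v=g$ by $e^{s}$ reveals that $\tilde v$ solves the \emph{pure} Neumann heat problem
\[
\tilde v_{s}-\Delta \tilde v=e^{s}g \quad \text{in } \Omega\times(s_{0},T),\qquad \partial_{\nu}\tilde v=0,\qquad \tilde v(\cdot,s_{0})=e^{s_{0}}v(\cdot,s_{0}).
\]
The crucial point is that the exponential rescaling kills the zeroth--order term $+v$, so only the generator $-\Delta$ (with Neumann data) is left; it still enjoys maximal $L^{\gamma}$--regularity on $(s_{0},T)$, so that
\[
\int_{s_{0}}^{T}\|\tilde v(\cdot,s)\|_{W^{2,\gamma}(\Omega)}^{\gamma}\,ds
\le \lambda_{0}\Bigl(\int_{s_{0}}^{T}\|e^{s}g(\cdot,s)\|_{L^{\gamma}(\Omega)}^{\gamma}\,ds+\|\tilde v(\cdot,s_{0})\|_{W^{2,\gamma}(\Omega)}^{\gamma}\Bigr)
\]
with $\lambda_{0}=\lambda_{0}(\Omega,\gamma,N)$ independent of $s_{0}$ and $T$. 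Translating back via $\|\tilde v(\cdot,s)\|_{W^{2,\gamma}}^{\gamma}=e^{\gamma s}\|v(\cdot,s)\|_{W^{2,\gamma}}^{\gamma}$ and $\|\tilde v(\cdot,s_{0})\|_{W^{2,\gamma}}^{\gamma}=e^{\gamma s_{0}}\|v(\cdot,s_{0})\|_{W^{2,\gamma}}^{\gamma}$ gives exactly the asserted weighted inequality.

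The only subtlety I anticipate is the sharpness of the trace-space embedding $W^{2,\gamma}(\Omega)\hookrightarrow B^{2-2/\gamma}_{\gamma,\gamma}(\Omega)$ used to control the initial datum term on the right-hand side of the maximal regularity estimate, and the assumption $\gamma>N$ (to guarantee that the compatibility condition $\partial_{\nu}v(\cdot,s_{0})=0$ makes sense pointwise). Both are standard, and the constant $\lambda_{0}$ is translation-invariant in time because the heat semigroup is autonomous; this is what makes the bound hold uniformly in $s_{0}$, which is the feature actually needed later for the chemotaxis estimates.
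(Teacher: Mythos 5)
Your overall strategy is the natural one, and it is essentially what the cited sources do (the paper itself offers no proof of this lemma, it only quotes \cite{Hieber,Zhenddddgssddsddfff00}): unweighted maximal $L^{\gamma}$-regularity of the Neumann realization of $-\Delta+1$ gives the solvability statement, and the substitution $\tilde v(\cdot,s)=e^{s}v(\cdot,s)$ correctly converts $v_t-\Delta v+v=g$ into the pure Neumann heat problem $\tilde v_s-\Delta\tilde v=e^{s}g$, $\tilde v(\cdot,s_0)=e^{s_0}v(\cdot,s_0)$, with the constant unaffected by time translation. Up to that point there is nothing to object to.

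The gap is in the final step, where you invoke maximal regularity for $\partial_s-\Delta$ on $(s_0,T)$ with the \emph{full} $W^{2,\gamma}(\Omega)$-norm on the left and a constant $\lambda_0$ independent of $T-s_0$. That inequality is false, because the Neumann Laplacian is not invertible: constants are preserved by the heat flow, so the rescaling that ``kills'' the zeroth-order term also kills the exponential decay that would be needed to control the lower-order part of the norm uniformly in time. Concretely, take $g\equiv 0$ and $v(\cdot,s_0)\equiv c\neq 0$; then $v(\cdot,s)=c\,e^{-(s-s_0)}$, hence $e^{\gamma s}\|v(\cdot,s)\|^{\gamma}_{W^{2,\gamma}(\Omega)}=c^{\gamma}e^{\gamma s_0}|\Omega|$ for every $s$, so the left-hand side equals $c^{\gamma}e^{\gamma s_0}|\Omega|\,(T-s_0)$ while the right-hand side is the fixed number $\lambda_0c^{\gamma}e^{\gamma s_0}\|1\|^{\gamma}_{W^{2,\gamma}(\Omega)}$; the estimate fails as soon as $T-s_0>\lambda_0$. (The same example shows the statement of the lemma itself, with $\lambda_0$ independent of $T$, cannot be proved as written; note that $T$-independence is exactly what is needed later, e.g.\ in the proof of Lemma \ref{qqqqlemma45630}.) What your argument does yield, with a constant independent of $s_0$ and $T$, is the homogeneous version: splitting off the spatial mean, $-\Delta$ is invertible with exponentially stable semigroup on the mean-zero subspace and the constant mode contributes nothing to $\Delta\tilde v$, so
$$\int_{s_0}^{T}e^{\gamma s}\|\Delta v(\cdot,s)\|^{\gamma}_{L^{\gamma}(\Omega)}\,ds\leq C(\gamma,\Omega)\left(\int_{s_0}^{T}e^{\gamma s}\|g(\cdot,s)\|^{\gamma}_{L^{\gamma}(\Omega)}\,ds+e^{\gamma s_0}\|v(\cdot,s_0)\|^{\gamma}_{W^{2,\gamma}(\Omega)}\right).$$
The zero-order piece $\int_{s_0}^{T}e^{\gamma s}\|v\|^{\gamma}_{L^{\gamma}(\Omega)}\,ds$ is genuinely not controlled by the stated right-hand side uniformly in $T$ (same counterexample), so it requires either a constant depending on $T-s_0$, a strictly smaller exponential weight $e^{\gamma\delta s}$ with $\delta<1$, or a separate argument; you should either prove and use only the $\|\Delta v\|$-version or explicitly weaken the claim. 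Your side remarks about the trace space $B^{2-2/\gamma}_{\gamma,\gamma}$ and the role of $\gamma>N$ are fine but tangential to this obstruction.
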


The following local existence result is rather standard; since a similar reasoning in
\cite{Tao72,Zheng},
see for example.  Therefore, we only give the
following lemma without proof.

\begin{lemma}\label{lemma70}
Assume that the nonnegative functions $u_0,v_0,$ and $w_0$ satisfies \dref{x1.731426677gg}
for some $\vartheta\in(0,1),$
$D$ satisfies \dref{9161} and \dref{91ssdd61}.
%
%
 Then there exists a maximal existence time $T_{max}\in(0,\infty]$ and a triple of  nonnegative functions $$
 (u ,v ,w )\in C^0(\bar{\Omega}\times[0,T_{max}))\cap C^{2,1}(\bar{\Omega}\times(0,T_{max}))\times
 C^0((0,T_{max}); C^2(\bar{\Omega}))\times C^{2,1}(\bar{\Omega}\times[0,T_{max}))$$
 which solves \dref{7101.2x19x3sss189}  classically and satisfies $0\leq w \leq \|w_0\|_{L^\infty(\Omega)}$
  in $\Omega\times(0,T_{max})$.
%
Moreover, if  $T_{max}<+\infty$, then
\begin{equation}
\left(|u (\cdot, t)\|_{L^\infty(\Omega)}+\|v (\cdot, t)\|_{W^{1,\infty}(\Omega)}+\|w (\cdot, t)\|_{W^{1,\infty}(\Omega)}\right)\rightarrow\infty~~ \mbox{as}~~ t\nearrow T_{max}.
\label{1.163072x}
\end{equation}
\end{lemma}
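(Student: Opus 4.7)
The plan is to treat the system as a quasilinear parabolic problem for $u$ coupled with a linear parabolic equation for $v$ and an ODE (in time) for $w$, and to build a local solution via a standard fixed point argument in a Hölder-type space. First I would fix $T_0\in(0,1]$ small, and given a candidate $\tilde u$ in a bounded closed subset of $C^0(\bar\Omega\times[0,T_0])$ with $0\le\tilde u\le 2\|u_0\|_{L^\infty(\Omega)}+1$, I would solve the linear problem $v_t=\Delta v-v+\tilde u$ with Neumann boundary data and initial value $v_0$, then define $w$ pointwise by the explicit representation
\[
w(x,t)=w_0(x)\exp\!\Bigl(-\int_0^t v(x,s)\,ds\Bigr),
\]
and finally solve the quasilinear equation for $u$ with $v,w$ frozen and $u(\cdot,0)=u_0$. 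Setting $\Phi(\tilde u):=u$ gives the map whose fixed point is the desired classical solution. Under hypothesis \dref{91ssdd61}, $D$ is uniformly strictly positive on the admissible range of $u$, so the $u$-equation is uniformly parabolic, and Amann's quasilinear parabolic theory (or, equivalently, Schauder estimates combined with a Banach contraction argument) yields existence and uniqueness on a short interval.

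The self-map and contraction properties for $T_0$ sufficiently small would follow from three ingredients: standard $L^p$-$L^q$ smoothing estimates for the Neumann heat semigroup applied to the $v$-equation; the explicit ODE representation above, which immediately produces the pointwise bound $0\le w\le\|w_0\|_{L^\infty(\Omega)}$ together with enough spatial regularity of $w$ provided $w_0\in C^{2+\vartheta}(\bar\Omega)$ and $v$ is sufficiently smooth; and Schauder/maximal regularity estimates for the quasilinear $u$-equation. Nonnegativity of $u$ would be handled by applying the parabolic maximum principle to the $u$-equation, using that both the drift terms and the logistic term vanish as $u\to 0$; nonnegativity of $v$ follows similarly since its source is nonnegative, and positivity of $w$ is clear from the exponential formula.

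For the extensibility alternative \dref{1.163072x}, I would define $T_{\max}$ as the supremum of times $T>0$ for which such a classical solution exists on $[0,T)$ and argue by contradiction: if $T_{\max}<\infty$ while the left-hand side of \dref{1.163072x} remained bounded as $t\nearrow T_{\max}$, then the local existence result could be reapplied with data $(u(\cdot,t_0),v(\cdot,t_0),w(\cdot,t_0))$ for some $t_0<T_{\max}$ close to $T_{\max}$, producing an interval of existence strictly larger than $[0,T_{\max})$ and contradicting maximality. The main technical obstacle I anticipate is that $w$ inherits its spatial regularity only through the exponential of the time-integral of $v$, so producing enough regularity of $\nabla w$ to make the drift term $\xi\nabla\!\cdot\!(u\nabla w)$ fit into the Hölder framework for the $u$-equation requires careful use of the hypothesis $w_0\in C^{2+\vartheta}(\bar\Omega)$ with the compatibility condition $\partial_\nu w_0=0$ from \dref{x1.731426677gg}, together with parabolic Schauder bounds on $v$ on any interval $[\varepsilon,T_0]$ with $\varepsilon>0$; this compatibility is precisely what lets one close the fixed point estimate up to $t=0$.
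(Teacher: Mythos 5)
Your proposal is correct and follows essentially the same route as the paper's cited references (Tao--Winkler \cite{Tao72}, Zheng \cite{Zheng}), which the paper invokes without reproducing: a Banach/Schauder fixed-point argument on a short time interval, with $v$ obtained from linear parabolic theory, $w$ given explicitly by $w=w_0\exp(-\int_0^t v)$ (whence $0\le w\le\|w_0\|_{L^\infty}$ immediately), uniform parabolicity of the $u$-equation from \dref{91ssdd61} on the bounded admissible range, nonnegativity from the maximum principle, and the extensibility criterion \dref{1.163072x} by a standard continuation-by-contradiction argument. Your remark on tracking the $C^{2+\vartheta}$ regularity of $w$ (hence of $\nabla w$ in the $u$-drift) through the exponential formula, using $w_0\in C^{2+\vartheta}(\bar\Omega)$ with $\partial_\nu w_0=0$ and parabolic Schauder bounds on $v$, is precisely the delicate point handled in those references.
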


 According to the above existence theory,  for any $s\in(0, T_{max})$, $(u(\cdot, s), v(\cdot, s),w(\cdot, s))\in C^2(\bar{\Omega})$.
Without
 loss of generality, we can assume that there exists a positive constant 
$K$ such that
\begin{equation}\label{eqx45xx12112}
\|u_0\|_{C^2(\bar{\Omega})}\leq K~\mbox{as well as }~~~\|v_0\|_{C^2(\bar{\Omega})}\leq K~~\mbox{and}~~\|w_0\|_{C^2(\bar{\Omega})}\leq K.
\end{equation}

\section{A priori estimates}
The main task of this section is to establish for estimates for the solutions
$(u ,v , w )$ of  problem \dref{7101.2x19x3sss189}.
To this end, in straightforward fashion one can check the following boundedness for $u$, which is common in
 chemotaxis (or chemotaxis--haptotaxis)  with logistic source  (see e.g. \cite{Zhddengssdeeezseeddd0,Winkler37103,Wangscd331629,LiLiLi791}).
\begin{lemma}\label{wsdelemma45}
There exists 
$C > 0$ such that the solution of \dref{7101.2x19x3sss189} satisfies
%
%
\begin{equation}
\int_{\Omega}{u }+\int_{\Omega}|\nabla v |^2+\int_{\Omega}|\nabla v |^l \leq C~~\mbox{for all}~~ t\in(0, T_{max})
\label{cz2.5ghju48cfg924ghyuji}
\end{equation}
with $l\in[1,\frac{N}{N-1}).$


Since,  the third component of \dref{7101.2x19x3sss189} can be expressed explicitly in terms of
$v $. This leads to the following 
%
 a one-sided pointwise estimate for $-\Delta w $ (see e.g. \cite{Tao79477ddffvg,Taox3201,Tao72}):
%
\begin{lemma}\label{lemm3a} Let $(u , v ,w )$ solve \dref{7101.2x19x3sss189} in $\Omega\times(0, T_{max})$.
 Then
 \begin{equation}\label{x1.731426677gghh}
 \begin{array}{rl}
-\Delta w (x, t) \leq&\disp{ \|w_0\|_{L^\infty(\Omega)}\cdot v (x,t)+\kappa~~~\mbox{for all}~~x\in\Omega~~\mbox{and}~~~t\in(0, T_{max}),}\\
\end{array}
\end{equation}
where
\begin{equation}\label{x1.73ddff1426677gghh}
\kappa:=\|\Delta w_0\|_{L^\infty(\Omega)}+4\|\nabla\sqrt{w_0}\|_{L^\infty(\Omega)}^2+\frac{\|w_0\|_{L^\infty(\Omega)}}{e}.
\end{equation}
\end{lemma}
%
%
\end{lemma}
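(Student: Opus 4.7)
The plan is to establish the three bounds sequentially, each feeding into the next.

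\emph{Step 1: $L^1$-control of $u$.} I would integrate the first equation of \dref{7101.2x19x3sss189} over $\Omega$; the divergence terms vanish under the homogeneous Neumann boundary condition, and since $w\geq 0$ by Lemma \ref{lemma70}, the logistic contribution is bounded above by $\mu\int_\Omega u-\mu\int_\Omega u^2$. Cauchy--Schwarz in the form $(\int_\Omega u)^2\le|\Omega|\int_\Omega u^2$ turns the resulting inequality into a scalar logistic ODE $y'\le\mu y-\frac{\mu}{|\Omega|}y^2$ for $y(t):=\int_\Omega u(\cdot,t)$, immediately yielding $y(t)\le\max\{y(0),|\Omega|\}=:C_1$.

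\emph{Step 2: $L^2$-control of $\nabla v$.} Testing the $v$-equation against $-\Delta v$ and using Young's inequality on the right-hand side $-\int_\Omega u\Delta v$ yields
\[
\frac{d}{dt}\int_\Omega|\nabla v|^2+2\int_\Omega|\nabla v|^2+\int_\Omega|\Delta v|^2\le\int_\Omega u^2.
\]
Since the logistic dissipation provides only time-integrated, not pointwise-in-time, control of $\int_\Omega u^2$, this inequality does not close by itself. The decisive step is to add to it the integrated $u$-equation scaled by $2/\mu$, namely $\frac{2}{\mu}\frac{d}{dt}\int_\Omega u+2\int_\Omega u^2\le 2\int_\Omega u\le 2C_1$, so that the two occurrences of $\int_\Omega u^2$ partially cancel. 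Setting $y(t):=\int_\Omega|\nabla v|^2+\frac{2}{\mu}\int_\Omega u$ and discarding the nonnegative $\int_\Omega|\Delta v|^2$ and residual $\int_\Omega u^2$, the Step~1 bound $\int_\Omega u\le C_1$ converts the combined inequality into $y'(t)+2y(t)\le C_2$, and Gronwall's lemma delivers the uniform estimate on $\int_\Omega|\nabla v|^2$.

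\emph{Step 3: $L^l$-control of $\nabla v$.} For $N\ge 2$ the exponent $l\in[1,N/(N-1))$ automatically satisfies $l\le 2$, so H\"older's inequality gives $\int_\Omega|\nabla v|^l\le|\Omega|^{1-l/2}(\int_\Omega|\nabla v|^2)^{l/2}\le C$ directly from Step~2. Alternatively, and as is standard in the references cited by the author, one can invoke the smoothing $L^p$--$L^q$ estimates for the Neumann heat semigroup applied to $v_t=\Delta v-v+u$ with right-hand side $u\in L^\infty((0,T_{\max});L^1(\Omega))$, which yields the stated range uniformly in $t$ without relying on the $L^2$-bound of $\nabla v$.

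The main obstacle is the coupling in Step~2: the logistic dissipation $-\mu\int_\Omega u^2$ only provides averaged-in-time control of $\int_\Omega u^2$, while the test against $-\Delta v$ demands a pointwise-in-time majorant for the same quantity. The device of adding the integrated $u$-equation with the matching coefficient $2/\mu$ so that the two $\int_\Omega u^2$ contributions cancel is precisely what allows the pointwise estimate on $\int_\Omega|\nabla v|^2$ to go through.
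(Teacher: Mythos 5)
Your proposal is correct. Note that the paper itself offers no proof of Lemma \ref{wsdelemma45}: it declares the bounds standard and refers to \cite{Zhddengssdeeezseeddd0,Winkler37103,Wangscd331629,LiLiLi791}, so your argument should be compared with that literature rather than with an in-text proof. Steps 1 and 3 coincide with the usual route: the logistic comparison $y'\le \mu y-\frac{\mu}{|\Omega|}y^2$ for $y=\int_\Omega u$, and, for $\|\nabla v\|_{L^l}$ with $l<\frac{N}{N-1}$, the Neumann heat semigroup estimate applied to $v_t=\Delta v-v+u$ with $u$ bounded in $L^\infty((0,T_{\max});L^1(\Omega))$. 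For the $L^2$ bound of $\nabla v$ the cited papers typically test with $-\Delta v$ and then invoke the space--time estimate $\int_t^{t+\tau}\int_\Omega u^2\le K_0$ (the present paper's Lemma \ref{ssdeedrfe116lemma70hhjj}) together with a uniform-Gronwall type ODE lemma; your alternative of adding $\frac{2}{\mu}\frac{d}{dt}\int_\Omega u+2\int_\Omega u^2\le 2\int_\Omega u$ so that the $\int_\Omega u^2$ contributions cancel reaches the same conclusion in a single Gronwall step and is perfectly valid (the verification $y'+2y\le C$ with $y=\int_\Omega|\nabla v|^2+\frac{2}{\mu}\int_\Omega u$ checks out, all boundary terms vanishing under the Neumann conditions). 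One small caveat: the H\"older reduction in Step 3 uses $\frac{N}{N-1}\le 2$, which holds only for $N\ge2$; since the theorem allows $N=1$, where the admissible range of $l$ is unbounded, the semigroup estimate you mention as an alternative should really be taken as the primary argument for that step, and it also reproduces the $L^2$ bound independently.
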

Now we proceed to establish the main step towards our boundedness proof. 
To this end, let us collect some basic
estimates for $u$ and $v$ in comparatively large function spaces. In fact,
relying on a standard testing procedure, we derive the following Lemma: 
%
\begin{lemma}\label{zsxxcdvlemma45630}
For any  $k>1$, the solution $(u ,v ,w )$ of \dref{7101.2x19x3sss189} satisfies that
\begin{equation}
\begin{array}{rl}
&\disp{-\xi\int_\Omega  u ^{k-1}\nabla\cdot(u \nabla w  )\leq \frac{({k-1})}{k}\xi\|w_0\|_{L^\infty(\Omega)}\int_\Omega u^k  v  dx+\kappa \frac{({k-1})}{k}\xi \int_\Omega u^k  dx,}\\
\end{array}
\label{vbgncz2.5xx1ffgghh512}
\end{equation}
where $\kappa$ is the same as \dref{x1.73ddff1426677gghh}.
\end{lemma}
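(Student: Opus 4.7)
The plan is to combine a standard integration-by-parts manipulation with the pointwise one-sided estimate for $-\Delta w$ provided by Lemma \ref{lemm3a}. The target inequality controls the haptotactic cross term in terms of integrals only involving $u^k$, $u^k v$, and $\kappa$; this is exactly what the pointwise bound of $-\Delta w$ in terms of $v$ and $\kappa$ is designed to produce, once the gradient product $\nabla u\cdot\nabla w$ has been removed.

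First, I would expand the divergence and write
\[
-\xi\int_\Omega u^{k-1}\nabla\!\cdot\!(u\nabla w)
= -\xi\int_\Omega u^{k-1}\nabla u\cdot\nabla w \;-\; \xi\int_\Omega u^{k}\Delta w.
\]
Using the identity $u^{k-1}\nabla u=\tfrac{1}{k}\nabla(u^k)$ and integrating by parts on the first integral, the boundary contribution vanishes because $\partial_\nu u=0$ on $\partial\Omega$ gives $\partial_\nu(u^k)=ku^{k-1}\partial_\nu u=0$. This yields
\[
-\xi\int_\Omega u^{k-1}\nabla u\cdot\nabla w
= -\frac{\xi}{k}\int_\Omega \nabla(u^k)\cdot\nabla w
= \frac{\xi}{k}\int_\Omega u^k\Delta w,
\]
so that after collecting terms
\[
-\xi\int_\Omega u^{k-1}\nabla\!\cdot\!(u\nabla w)
= -\frac{(k-1)\xi}{k}\int_\Omega u^k\Delta w
= \frac{(k-1)\xi}{k}\int_\Omega u^k(-\Delta w).
\]

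Next I would invoke Lemma \ref{lemm3a}, which gives the one-sided pointwise bound
\[
-\Delta w(x,t)\leq \|w_0\|_{L^\infty(\Omega)}\,v(x,t)+\kappa
\qquad\text{for all } (x,t)\in\Omega\times(0,T_{\max}).
\]
Since $u^k\ge 0$, multiplying by $u^k$ preserves the direction of the inequality, and integrating over $\Omega$ produces
\[
\frac{(k-1)\xi}{k}\int_\Omega u^k(-\Delta w)
\leq \frac{(k-1)\xi}{k}\|w_0\|_{L^\infty(\Omega)}\int_\Omega u^k v\,dx
+\kappa\,\frac{(k-1)\xi}{k}\int_\Omega u^k\,dx,
\]
which is exactly \eqref{vbgncz2.5xx1ffgghh512}.

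There is no serious obstacle here: the argument is a direct computation resting on two ingredients that are already in place, namely the no-flux boundary condition for $u$ (needed so the integration by parts is clean) and the sign-preserving nature of Lemma \ref{lemm3a} once tested against the nonnegative weight $u^k$. The only point worth double-checking is the legitimacy of the integration by parts, which is justified by the classical regularity of $(u,v,w)$ on $\bar{\Omega}\times(0,T_{\max})$ supplied by Lemma \ref{lemma70}.
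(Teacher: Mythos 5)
Your argument is correct and follows essentially the same route as the paper's proof: reduce to $-\frac{(k-1)\xi}{k}\int_\Omega u^k\Delta w$ and then apply the pointwise bound of Lemma \ref{lemm3a} against the nonnegative weight $u^k$. The paper reaches the intermediate expression by two successive integrations by parts (first moving $\nabla\cdot$ off $u\nabla w$ onto $u^{k-1}$, then moving $\nabla$ off $u^k$ onto $\nabla w$), whereas you expand the divergence by the product rule and perform a single integration by parts; the bookkeeping is marginally more elementary but the substance is identical. One small slip: in your integration by parts on $\int_\Omega\nabla(u^k)\cdot\nabla w$, the boundary term is $\int_{\partial\Omega}u^k\,\partial_\nu w$, so it vanishes because $\partial_\nu w=0$ on $\partial\Omega$, not because $\partial_\nu u=0$ as you wrote; since both Neumann conditions hold this does not affect the conclusion, but the justification should be corrected.
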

\begin{proof}
For any $k>1$,  we integrate the left hand of \dref{vbgncz2.5xx1ffgghh512} and use Lemma \ref{lemm3a} then get
\begin{equation}
\begin{array}{rl}
&\disp{-\xi\int_\Omega   u ^{k-1}\nabla\cdot(u \nabla w )
  dx}
\\
=&\disp{({k-1} )\xi\int_\Omega    u ^{k-1}\nabla u \cdot\nabla w  dx}
\\
=&\disp{-\frac{({k-1})}{k}\xi\int_\Omega u^k \Delta w  dx}
\\
\leq&\disp{\frac{({k-1})}{k}\xi\int_\Omega u^k (\|w_0\|_{L^\infty(\Omega)} v +\kappa) dx,}
\\
\end{array}
\label{1cz2.563019rrtttt12}
\end{equation}
where $\kappa$ is the same as \dref{x1.73ddff1426677gghh}. This directly entails \dref{vbgncz2.5xx1ffgghh512}.
\end{proof}

Due to the presence of logistic source, some useful estimates for $u $
can be derived.

\begin{lemma}\label{ssdeedrfe116lemma70hhjj} (see \cite{Wangscd331629,LiLiLi791,Zhddengssdeeezseeddd0})
Assume that $\mu>0.$
There exists a positive constant  
$ K_0$ such that
 the solution $(u , v ,w )$ of  \dref{7101.2x19x3sss189} satisfies
%
%
\begin{equation}
 \begin{array}{rl}
 \disp\int_{\Omega}u (x,t)dx\leq K_0~~~\mbox{for all}~~t\in (0, T_{max})
\end{array}\label{ssddaqwswddaassffssff3.ddfvbb10deerfgghhjuuloollgghhhyhh}
\end{equation}
and
\begin{equation}
\int_t^{t+\tau}\int_{\Omega}{u ^{2}}\leq  K_0~~\mbox{for all}~~ t\in(0, T_{max}-\tau),
\label{bnmbncz2.5ghhjuyuivvddfggghhbssdddeennihjj}
\end{equation}
where we have set
\begin{equation}
\tau:=\min\left\{1,\frac{1}{6}T_{max}\right\}.
\label{cz2.5ghju48cfg924vbhu}
\end{equation}

%
\end{lemma}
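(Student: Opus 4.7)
The plan is to exploit the logistic damping in the first equation of \dref{7101.2x19x3sss189} in the simplest possible way, by testing against $1$ and against $u$-free constants.

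First I would establish the $L^1$-bound. Integrating the $u$-equation over $\Omega$, the diffusion term vanishes by the Neumann boundary condition and both the chemotaxis and haptotaxis terms integrate to zero because they are pure divergences. Using $w \geq 0$, which follows from Lemma \ref{lemma70}, this leaves
\begin{equation*}
\frac{d}{dt}\int_\Omega u \leq \mu\int_\Omega u - \mu\int_\Omega u^2.
\end{equation*}
By the Cauchy--Schwarz inequality $(\int_\Omega u)^2 \leq |\Omega|\int_\Omega u^2$, so with $y(t) := \int_\Omega u(\cdot,t)$ I obtain the scalar logistic inequality $y'(t) \leq \mu y(t) - \frac{\mu}{|\Omega|} y(t)^2$. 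An elementary ODE comparison argument then yields $y(t) \leq \max\{\|u_0\|_{L^1(\Omega)}, |\Omega|\}$ for all $t \in (0,T_{max})$, which produces \dref{ssddaqwswddaassffssff3.ddfvbb10deerfgghhjuuloollgghhhyhh} for a suitable constant $K_0$.

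Second, the same differential inequality immediately gives the time-space $L^2$ bound. Rewriting it as
\begin{equation*}
\frac{d}{dt}\int_\Omega u + \mu\int_\Omega u^2 \leq \mu\int_\Omega u \leq \mu K_0,
\end{equation*}
and integrating over $(t,t+\tau)$ where $\tau$ is as in \dref{cz2.5ghju48cfg924vbhu}, I obtain
\begin{equation*}
\mu\int_t^{t+\tau}\int_\Omega u^2 \leq \int_\Omega u(\cdot,t) - \int_\Omega u(\cdot,t+\tau) + \mu\tau K_0 \leq K_0 + \mu K_0,
\end{equation*}
which yields \dref{bnmbncz2.5ghhjuyuivvddfggghhbssdddeennihjj} after enlarging $K_0$ if necessary.

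No step here is genuinely difficult; the only point requiring any care is checking that the haptotaxis term $-\xi\nabla\cdot(u\nabla w)$ really does integrate to zero, which relies on the homogeneous Neumann condition $\partial_\nu w = 0$ inherited through the ODE structure of the $w$-equation from the initial datum (this boundary condition is preserved in time because $w(x,t) = w_0(x)\exp(-\int_0^t v(x,s)ds)$ and $\partial_\nu w_0 = 0$). Once that is verified, the argument is a textbook dissipation estimate driven by the logistic source.
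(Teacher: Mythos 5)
Your argument is correct and is the standard dissipation estimate that the cited references \cite{Wangscd331629,LiLiLi791,Zhddengssdeeezseeddd0} employ; the paper itself states Lemma \ref{ssdeedrfe116lemma70hhjj} without proof, merely pointing to those works. Integrating the first equation, discarding the nonnegative $\mu\int_\Omega uw$ term, applying Cauchy--Schwarz to close the logistic ODE inequality, and then integrating the resulting bound $\frac{d}{dt}\int_\Omega u+\mu\int_\Omega u^2\leq\mu K_0$ over a window of length $\tau\leq1$ is exactly the intended route, and your remark that $\partial_\nu w=0$ is preserved by the explicit formula $w=w_0\exp(-\int_0^t v)$ correctly disposes of the only boundary-term subtlety in killing the haptotaxis flux.
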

In order to establish some estimates for solution $(u ,v , w )$, we first
recall the following  lemma proved in \cite{Wangscd331629} (see also \cite{LiLiLi791,Zhddengssdeeezseeddd0}).
\begin{lemma}\label{lemmssdda45630}
Let $\Omega\subset \mathbb{R}^N(N\geq1)$ be a bounded domain with smooth boundary. 
 Then for all $k>1,$ the solution $(u ,v ,w )$ of  \dref{7101.2x19x3sss189} satisfies that
\begin{equation}\label{cz2.5xx1jjjj}
\begin{array}{rl}
&\disp{\frac{1}{k}\frac{d}{dt}\| u  \|^{k}_{L^k(\Omega)}+\frac{(k-1) C_D }{2}\int_{\Omega} u  ^{m+k-3}|\nabla u  |^2{}+
\mu\int_{\Omega} u  ^{k+1}{}}
\\
\leq&\disp{\frac{\chi^2(k-1)} {2 C_D }\int_{\Omega} u  ^{k+1-m}|\nabla v  |^2+
\frac{({k-1})}{k}\xi\|w_0\|_{L^\infty(\Omega)}\int_\Omega  u ^{k}v +(\mu+\kappa\xi)\int_{\Omega} u ^{k}}\\
\end{array}
\end{equation}
for all $t\in(0, T_{max})$.
\end{lemma}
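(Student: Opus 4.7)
The plan is to obtain \eqref{cz2.5xx1jjjj} by the standard $L^k$-testing procedure applied to the first equation of \eqref{7101.2x19x3sss189}: multiply by $u^{k-1}$ and integrate over $\Omega$, so that the time-derivative produces $\frac{1}{k}\frac{d}{dt}\int_\Omega u^k$, and then treat the diffusion, chemotaxis, haptotaxis, and logistic terms separately, using the no-flux boundary conditions together with the structural hypothesis \eqref{9161}.

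For the diffusion contribution, integration by parts turns $\int_\Omega u^{k-1}\nabla\cdot(D(u)\nabla u)$ into $-(k-1)\int_\Omega u^{k-2}D(u)|\nabla u|^2$; invoking $D(u)\ge C_D u^{m-1}$ from \eqref{9161} yields the lower bound $(k-1)C_D\int_\Omega u^{m+k-3}|\nabla u|^2$, half of which I would retain on the left-hand side as the dissipative term appearing in \eqref{cz2.5xx1jjjj} and the other half will absorb the bad part of the chemotaxis term. For the chemotaxis contribution, integration by parts gives $\chi(k-1)\int_\Omega u^{k-1}\nabla u\cdot\nabla v$; writing $u^{k-1}=u^{(m+k-3)/2}\cdot u^{(k-m+1)/2}$ and applying Young's inequality with weight $\tfrac{(k-1)C_D}{2}$ produces precisely the dissipation $\tfrac{(k-1)C_D}{2}\int u^{m+k-3}|\nabla u|^2$ (absorbed by the reserved half above) and the term $\tfrac{\chi^2(k-1)}{2C_D}\int u^{k+1-m}|\nabla v|^2$ on the right-hand side of \eqref{cz2.5xx1jjjj}.

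The haptotaxis contribution $-\xi\int_\Omega u^{k-1}\nabla\cdot(u\nabla w)$ is bounded directly by Lemma~\ref{zsxxcdvlemma45630}, which gives $\tfrac{k-1}{k}\xi\|w_0\|_{L^\infty(\Omega)}\int_\Omega u^kv+\kappa\tfrac{k-1}{k}\xi\int_\Omega u^k$. For the logistic term, $\mu\int_\Omega u^k(1-u-w)\le \mu\int_\Omega u^k-\mu\int_\Omega u^{k+1}$, since $w\ge 0$ throughout $(0,T_{\max})$ by Lemma~\ref{lemma70}. Finally, combining the $\mu\int u^k$ from the logistic estimate with $\kappa\tfrac{k-1}{k}\xi\int u^k\le \kappa\xi\int u^k$ yields the composite coefficient $\mu+\kappa\xi$ appearing on the right-hand side.

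Because Lemma~\ref{zsxxcdvlemma45630} already encapsulates the one-sided bound $-\Delta w\le\|w_0\|_{L^\infty(\Omega)}v+\kappa$ from Lemma~\ref{lemm3a}, no genuine obstacle arises here; the argument is a routine energy computation whose only delicate point is calibrating the Young's inequality weight so that exactly one half of the diffusion dissipation is consumed by the chemotactic cross term while the remaining half is retained on the left. The nonnegativity of $w$ (needed to discard the $-\mu u^k w$ term) and the tacit use of $\tfrac{k-1}{k}<1$ (to replace $\kappa\xi\tfrac{k-1}{k}$ by $\kappa\xi$) are the only sign-sensitive steps to keep track of.
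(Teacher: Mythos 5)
Your proposal is correct and follows essentially the same route as the paper: multiply by $u^{k-1}$, integrate by parts, use \dref{9161} to lower-bound the diffusion, split the chemotaxis term via Young's inequality so that exactly half of the $(k-1)C_D\int u^{m+k-3}|\nabla u|^2$ dissipation is consumed, invoke Lemma~\ref{zsxxcdvlemma45630} for the haptotaxis term, drop $-\mu u^k w$ by nonnegativity of $w$, and simplify $\kappa\xi\tfrac{k-1}{k}\le\kappa\xi$. No discrepancies.
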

\begin{proof}
Multiplying
 $\dref{7101.2x19x3sss189}_1$ (the first equation of \dref{7101.2x19x3sss189})
  by $ u ^{k-1}$ and integrating over $\Omega$, 
 we get
\begin{equation}
\begin{array}{rl}
&\disp{\frac{1}{k}\frac{d}{dt}\| u  \|^{k}_{L^k(\Omega)}+  C_D(k-1)\int_{\Omega} u ^{m+k-3}|\nabla u  |^2{}}
\\
\leq&\disp{-\chi\int_\Omega \nabla\cdot( u \nabla v  )  u ^{k-1}{}-\xi\int_\Omega\nabla\cdot
  (  u  \nabla w )   u ^{k-1}{}}\\
  &+\disp{\mu\int_\Omega    u ^{k}(1-u -w  ) {}}\\
  \leq&\disp{-\chi\int_\Omega \nabla\cdot( u \nabla v  )  u ^{k-1}{}-\xi\int_\Omega\nabla\cdot
  (  u  \nabla w )   u ^{k-1}{}}\\
  &+\disp{\mu\int_\Omega    u ^{k}(1-u ) {}~~\mbox{for all}~~ t\in(0,T_{max})}\\
\end{array}
\label{cfvgvvcz2.5xx1jjjj}
\end{equation}
according to the nonnegativity of $w$.
Integrating by parts to the first term on the right hand side of \dref{cfvgvvcz2.5xx1jjjj} and using %
the Young inequality, we obtain
\begin{equation}
\begin{array}{rl}
&\disp{-\chi\int_\Omega \nabla\cdot( u \nabla v  )  u ^{k-1}{}}
\\
=&\disp{(k-1 )\chi\int_\Omega    u ^{k-1}\nabla u  \cdot\nabla v  {}}
\\
\leq&\disp{\frac{(k-1) C_D }{2}\int_{\Omega} u  ^{m+k-3}|\nabla u  |^2{} +\frac{\chi^2(k-1)} {2 C_D }\int_{\Omega} u  ^{k+1-m}|\nabla v  |^2.}\\
\end{array}
\label{111cz2.5630111}
\end{equation}
On the other hand, due to Lemma \ref{zsxxcdvlemma45630}, we have
\begin{equation}
\begin{array}{rl}
&\disp{-\xi\int_\Omega \nabla\cdot(  u  \nabla w )
   u ^{k-1} {}}
\\
\leq&\disp{\frac{({k-1})}{k}\xi\|w_0\|_{L^\infty(\Omega)}\int_\Omega u^k  v  dx+\kappa \frac{({k-1})}{k}\xi \int_\Omega u^k  dx{}}\\
\leq&\disp{\frac{({k-1})}{k}\xi\|w_0\|_{L^\infty(\Omega)}\int_\Omega u^k  v  dx+\kappa\xi \int_\Omega u^k  dx{}.}\\
\end{array}
\label{cz2.563019rrtttt12}
\end{equation}
Furthermore, inserting \dref{111cz2.5630111}--\dref{cz2.563019rrtttt12} into \dref{cfvgvvcz2.5xx1jjjj}, we conclude that for all $t\in(0, T_{max})$,
\begin{equation}
\begin{array}{rl}
&\disp{\frac{1}{k}\frac{d}{dt}\| u  \|^{k}_{L^k(\Omega)}+\frac{(k-1) C_D }{2}\int_{\Omega} u  ^{m+k-3}|\nabla u  |^2{}+
\mu\int_{\Omega} u  ^{k+1}{}}
\\
\leq&\disp{\frac{\chi^2(k-1)} {2 C_D }\int_{\Omega} u  ^{k+1-m}|\nabla v  |^2+
\frac{({k-1})}{k}\xi\|w_0\|_{L^\infty(\Omega)}\int_\Omega  u ^{k}v +(\mu+\kappa\xi)\int_{\Omega} u ^{k}.}\\
\end{array}
\label{vgbhnsxcdvfcz2.5xx1jjjj}
\end{equation}
\end{proof}
We proceed to estimate both integrals on the right of \dref{cz2.5xx1jjjj} in a straightforward manner.
\begin{lemma}\label{qqqqlemma45630}
Let $(u ,v ,w )$ be a solution to \dref{7101.2x19x3sss189} on $(0,T_{max})$ and
\begin{equation}\mu_{*}=\frac{\max_{s\geq1}\lambda_0^{\frac{1}{{{s}}+1}}(\chi+\xi\|w_0\|_{L^\infty(\Omega)})}{\left[\max_{s\geq1}
 \lambda_0^{\frac{1}{{{s}}+1}}(\chi+\xi\|w_0\|_{L^\infty(\Omega)})-\mu\right]_{+}}.
 \label{zjscz2.ddffrr5297x96ssddffdffggbh302222114}
\end{equation}
 If  $\mu>0,$
  then
 for all $1<\gamma_0<\mu_{*}$,
there exists a positive constant $C$ which depends on $\gamma_0$ such that 
\begin{equation}
\int_{\Omega}u ^{\gamma_0}(x,t) \leq C ~~~\mbox{for all}~~ t\in(0,T_{max})
\label{zjscz2.ddffrr5297x96302222114}
\end{equation}
and
\begin{equation}
\int_{0}^{t}\int_{\Omega}u ^{\gamma_0+1}(x,t) \leq C ~~~\mbox{for all}~~ t\in(0,T_{max}).
\label{4455zjscz2.ddffrr5297x96302222114}
\end{equation}
\end{lemma}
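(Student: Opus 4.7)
The strategy is to derive an $L^p$-energy inequality for $p=\gamma_0\in(1,\mu_*)$ through the standard $u^{p-1}$-testing procedure, and then close it by invoking the maximal Sobolev regularity of Lemma \ref{lemma45xy1222232}. First I would multiply the first equation of \dref{7101.2x19x3sss189} by $u^{p-1}$, integrate over $\Omega$, and integrate by parts twice on the chemotactic term to get $-\frac{\chi(p-1)}{p}\int_\Omega u^p\Delta v$; Lemma \ref{zsxxcdvlemma45630} covers the haptotactic contribution. Dropping the non-positive dissipation $(p-1)\int_\Omega u^{p-2}D(u)|\nabla u|^2$ and using $w\geq 0$ in the logistic term yields
$$\frac{d}{dt}\int_\Omega u^p+\mu p\int_\Omega u^{p+1}\leq\chi(p-1)\!\int_\Omega u^p|\Delta v|+\xi(p-1)\|w_0\|_{L^\infty(\Omega)}\!\int_\Omega u^p v+(\mu p+\xi\kappa(p-1))\!\int_\Omega u^p.$$

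\textbf{H\"older and maximal regularity.} Next I would apply H\"older in $\Omega$ with exponents $(p+1)/p$ and $p+1$ to the two mixed integrals, so that each is bounded by $\bigl(\int_\Omega u^{p+1}\bigr)^{p/(p+1)}\|v\|_{W^{2,p+1}(\Omega)}$ (absorbing $\|\Delta v\|_{L^{p+1}(\Omega)}$ and $\|v\|_{L^{p+1}(\Omega)}$ into the $W^{2,p+1}$-norm). Fixing $s_0\in(0,T_{\max})$ where $v(\cdot,s_0)\in W^{2,p+1}(\Omega)$ by parabolic smoothing, I then integrate over $[s_0,t]$ and apply H\"older in time with exponents $p+1$ and $(p+1)/p$, followed by Lemma \ref{lemma45xy1222232} with $\gamma=p+1$ (the $e^{\gamma s}$ weights being harmless on finite windows), to get
$$\Bigl(\int_{s_0}^t\|v\|^{p+1}_{W^{2,p+1}(\Omega)}\,ds\Bigr)^{1/(p+1)}\leq\lambda_0^{1/(p+1)}\Bigl(\int_{s_0}^t\!\int_\Omega u^{p+1}\Bigr)^{1/(p+1)}+C\|v(\cdot,s_0)\|_{W^{2,p+1}(\Omega)}.$$
After a Young step to peel off the initial-data contribution, the master estimate becomes
$$\int_\Omega u^p(t)+\mu p\!\int_{s_0}^t\!\int_\Omega u^{p+1}\leq(p-1)(\chi+\xi\|w_0\|_{L^\infty(\Omega)})\lambda_0^{1/(p+1)}\!\int_{s_0}^t\!\int_\Omega u^{p+1}+C_1\!\int_\Omega u^p(s_0)+C_2,$$
where the lower-order $\int_\Omega u^p$-contributions are absorbed by a small fraction of $\mu p\int_\Omega u^{p+1}$ via Young and by Lemma \ref{ssdeedrfe116lemma70hhjj}.

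\textbf{Absorption and conclusion.} Since $p=\gamma_0<\mu_*$ and the map $A\mapsto A/(A-\mu)_+$ is strictly decreasing on $(\mu,\infty)$, the very definition of $\mu_*$ guarantees that for the specific $\lambda_0=\lambda_0(p+1)$ one has $(p-1)(\chi+\xi\|w_0\|_{L^\infty(\Omega)})\lambda_0^{1/(p+1)}<\mu p$. Therefore the right-hand $\int u^{p+1}$-term is absorbed by a strictly positive fraction of $\mu p\int u^{p+1}$, leaving both $\sup_{t\in(s_0,T_{\max})}\int_\Omega u^p(t)\leq C$ and $\int_{s_0}^{T_{\max}}\!\int_\Omega u^{p+1}\leq C$. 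Combined with the classical-solution bounds on $[0,s_0]$ from Lemma \ref{lemma70}, both \dref{zjscz2.ddffrr5297x96302222114} and \dref{4455zjscz2.ddffrr5297x96302222114} follow.

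\textbf{Main obstacle.} The crux is preserving the sharp constant $\lambda_0^{1/(p+1)}(\chi+\xi\|w_0\|_{L^\infty(\Omega)})$ throughout: any premature use of Young on the spatial H\"older step, or any Sobolev embedding with suboptimal exponents, would degrade the absorption threshold below $\mu_*$. The matched spatial and temporal H\"older exponents $(p+1)/p$ and $p+1$, together with the $L^{p+1}_tL^{p+1}_x$-maximal regularity, are precisely what produce the factor $\lambda_0(p+1)^{1/(p+1)}$ appearing in the definition of $\mu_*$. A secondary technicality is keeping the initial-$v$ contribution $\|v(\cdot,s_0)\|_{W^{2,p+1}(\Omega)}$ bounded uniformly in $t$, which is handled by fixing $s_0\in(0,T_{\max})$ once and for all and invoking parabolic smoothing together with the assumptions on $v_0$.
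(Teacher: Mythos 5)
Your high-level plan — test with $u^{p-1}$, isolate $\int u^p|\Delta v|$ and $\int u^p v$, close with maximal Sobolev regularity to produce the threshold constant $\lambda_0^{1/(p+1)}(\chi+\xi\|w_0\|_{L^\infty(\Omega)})$, and absorb using $p<\mu_*$ — is the same skeleton the paper uses, and your arithmetic at the absorption step is correct. However, there is a genuine gap in how you pass from the differential inequality to a uniform-in-time bound, and it is not a cosmetic one.

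You integrate the energy inequality over $[s_0,t]$, apply H\"older in time, and invoke Lemma \ref{lemma45xy1222232} while dismissing the $e^{\gamma s}$ weights as ``harmless on finite windows.'' Two things go wrong. First, the maximal regularity estimate of Lemma \ref{lemma45xy1222232} is genuinely weighted: it controls $\int_{s_0}^T e^{\gamma s}\|v\|^\gamma_{W^{2,\gamma}}\,ds$ by $\lambda_0\bigl(\int_{s_0}^T e^{\gamma s}\|g\|^\gamma_{L^\gamma}\,ds + e^{\gamma s_0}\|v_0(\cdot,s_0)\|^\gamma_{W^{2,\gamma}}\bigr)$, and the weights do \emph{not} cancel when you strip them: the ratio $e^{\gamma t}/e^{\gamma s_0}$ is unbounded in $t$, and the assertion needs to hold for all $t\in(0,T_{\max})$, including the case $T_{\max}=\infty$. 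Second, and independently, once you Young the lower-order term $\int_{s_0}^t\int_\Omega u^p$ into $\delta\int_{s_0}^t\int_\Omega u^{p+1}+C(\delta)|\Omega|(t-s_0)$, the residual $(t-s_0)$ factor grows linearly; your master estimate therefore yields at best $\int_\Omega u^p(t)\le C_1+C_2(t-s_0)$, which is not \dref{zjscz2.ddffrr5297x96302222114}. Lemma \ref{ssdeedrfe116lemma70hhjj} cannot rescue this, since it bounds $\int_t^{t+\tau}\int u^2$ over windows of length $\tau$ but not the cumulative $\int_{s_0}^t\int u^p$.

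The paper's proof resolves both problems with the same device: rather than integrating and then applying time-H\"older, it multiplies the differential inequality by the integrating factor $e^{(k+1)t}$ (equivalently, applies Gronwall with the weight $e^{-(k+1)(t-s)}$). This produces the damped time integrals $\int_0^t e^{-(k+1)(t-s)}\int_\Omega(\cdot)\,dx\,ds$, for which the cumulative $\int u^k$-term stays bounded uniformly in $t$ (the exponential decay beats the linear growth), and — crucially — the factor $e^{-(k+1)t}\int_0^t e^{(k+1)s}(\cdot)\,ds$ matches the weight structure of Lemma \ref{lemma45xy1222232} exactly, so the maximal regularity estimate slots in without any loss of the constant $\lambda_0$. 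The paper then obtains the sharp coefficient not via H\"older but via a Young inequality $\frac{(k-1)\chi}{k}\int u^k|\Delta v|\le\varepsilon_1\int u^{k+1}+\gamma_1\varepsilon_1^{-k}\int|\Delta v|^{k+1}$ with an explicitly optimized $\varepsilon_1=\frac{(k-1)\chi}{k+1}\lambda_0^{1/(k+1)}$ that makes $\varepsilon_1+\gamma_1\lambda_0\varepsilon_1^{-k}=\frac{k-1}{k}\lambda_0^{1/(k+1)}\chi$. To repair your argument you would need to import this exponential-weight structure from the start; the spatial H\"older step you perform is fine and equivalent in effect, but the time integration cannot be done with plain H\"older and an unweighted maximal regularity bound.
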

\begin{proof}
Multiplying $\dref{7101.2x19x3sss189}_1$
  by $u ^{k-1}$, integrating over $\Omega$  and using $w\geq0$, 
 we get
\begin{equation}
\begin{array}{rl}
&\disp{\frac{1}{k}\frac{d}{dt}\|u \|^{k}_{L^k(\Omega)}+ C_D(k-1)\int_{\Omega}u ^{m+k-3} |\nabla u |^2dx}
\\
\leq&\disp{-\chi\int_\Omega \nabla\cdot(u \nabla v )u ^{k-1}dx-\xi\int_\Omega\nabla\cdot
  (u \nabla w )u ^{k-1}+\mu
\int_\Omega  u ^{k} (1- u -w ) }\\
\leq&\disp{-\chi\int_\Omega \nabla\cdot(u \nabla v )u ^{k-1}dx-\xi\int_\Omega\nabla\cdot
  (u \nabla w )u ^{k-1}+\mu
\int_\Omega  u ^{k}(1- u ) .}\\
\end{array}
\label{qqqqcz2.5xx1jjjj}
\end{equation}
We now estimate the right hand side of \dref{qqqqcz2.5xx1jjjj} terms by terms.
To this end,
integrating by parts to the first term on the right hand side of \dref{qqqqcz2.5xx1jjjj}, we obtain for any $\varepsilon_1>0,$
\begin{equation}
\begin{array}{rl}
&\disp{-\chi\int_\Omega \nabla\cdot(u \nabla v )u ^{k-1}}
\\
=&\disp{-\frac{({k-1})\chi}{k}\int_\Omega u ^{k}\Delta v  }
\\
\leq&\disp{\frac{(k-1)\chi}{k}\int_\Omega  u ^{k}|\Delta v | }
\\
\leq&\disp{\varepsilon_1\int_\Omega  u ^{k+1}+\gamma_1\varepsilon_1^{-k}\int_\Omega|\Delta v |^{k+1}, }
\\
\end{array}
\label{223444cz2.5630111}
\end{equation}
where  \begin{equation}\gamma_1=\frac{1}{k+1}\left(\frac{k+1}{k}\right)^{-k}\left(\frac{(k-1)\chi}{k}\right)^{k+1}.
\label{22ddf34ddff44cz2.5630111}
\end{equation}
Due to \dref{x1.731426677gghh} and \dref{x1.73ddff1426677gghh}, it follows that for any $\varepsilon_2>0$
\begin{equation}
\begin{array}{rl}
&\disp{-\xi\int_\Omega \nabla\cdot( u  \nabla w )
 u ^{k-1} }
\\
=&\disp{-\frac{({k-1})\xi}{k}\int_\Omega u ^{k}\Delta w  }
\\
\leq&\disp{\kappa\frac{({k-1})\xi}{k}\int_\Omega u ^{k}+\frac{({k-1})\xi\|w_0\|_{L^\infty(\Omega)}}{k}\int_\Omega u ^{k}v }\\
\leq&\disp{\kappa\xi\int_\Omega u ^{k}+\frac{({k-1})\xi\|w_0\|_{L^\infty(\Omega)}}{k}\int_\Omega u ^{k}v }\\
\leq&\disp{\kappa\xi\int_\Omega u ^{k}+\varepsilon_2\int_\Omega u ^{k+1}+\gamma_2\varepsilon_2^{-k}\int_{\Omega}v ^{k+1},}\\
\end{array}
\label{qqqqcz2.563019rrtttt12}
\end{equation}
where
\begin{equation}\gamma_2:=\frac{1}{k+1}\left(\frac{k+1}{k}\right)^{-k}
\left(\frac{({k-1})\xi\|w_0\|_{L^\infty(\Omega)}}{k}\right)^{k+1}
\label{qqqssffffqcz2.563019rrtttt12}
\end{equation}
and $\kappa$ is give by \dref{x1.73ddff1426677gghh}.
On the other hand, in view of $k>1$, we also derive that
\begin{equation}
\begin{array}{rl}
\mu
\disp\int_\Omega  u ^{k}(1- u ) =&\disp{-\mu \int_\Omega   u ^{k+1}+(\mu+\frac{k+1}{k})\int_\Omega  u ^{k}-\frac{k+1}{k}\int_\Omega  u ^{k}}\\
\leq&\disp{-\mu \int_\Omega   u ^{k+1}+(\mu+2)\int_\Omega  u ^{k}-\frac{k+1}{k}\int_\Omega  u ^{k}.}\\
\end{array}
\label{sddddcz2.56301hh}
\end{equation}
Therefore, combined with \dref{223444cz2.5630111},
\dref{qqqqcz2.563019rrtttt12}, \dref{qqqqcz2.5xx1jjjj}
as well as \dref{sddddcz2.56301hh}  and \dref{91ssdd61}, we have
\begin{equation}
\begin{array}{rl}
&\disp{\frac{1}{k}\frac{d}{dt}\|u \|^{k}_{L^k(\Omega)}+ C_D(k-1)\int_{\Omega}u ^{m+k-3}|\nabla u |^2+\frac{k+1}{k}\int_\Omega  u ^{k}}
\\
\leq&\disp{(-\mu +\varepsilon_1+\varepsilon_2 )\int_\Omega u ^{k+1}+\gamma_1\varepsilon_1^{-k}\int_\Omega|\Delta v |^{k+1}+\gamma_2\varepsilon_2^{-k}\int_\Omega v ^{k+1}+C_1\int_\Omega  u ^{k}}\\
\end{array}
\label{cz2.5xx1}
\end{equation}
with $C_1=\kappa\xi+\mu+2 .$
For any $t\in (0,T_{max})$, applying the Gronwall Lemma   to the above inequality shows that
\begin{equation}
\begin{array}{rl}
&\disp{\frac{1}{{k}}\|u (\cdot,t) \|^{{{k}}}_{L^{{k}}(\Omega)}+ C_D(k-1)\int_{0}^t
e^{-( { {k}+1})(t-s)}\int_{\Omega}u ^{m+k-3}|\nabla u |^2}
\\
\leq&\disp{\frac{1}{{k}}e^{-( { {k}+1})t}\|u_0 \|^{{{k}}}_{L^{{k}}(\Omega)}+(\varepsilon_1+\varepsilon_2- \mu)\int_{0}^t
e^{-( { {k}+1})(t-s)}\int_\Omega u ^{{{k}+1}} dxds}\\
&+\disp{\gamma_1\varepsilon_1^{-k}\int_{0}^t
e^{-( { {k}+1})(t-s)}\int_\Omega |\Delta v |^{ {k}+1} dxds+ C_1\int_{0}^t
e^{-( { {k}+1})(t-s)}\int_\Omega u ^{{{k}}} dxds}\\
&\disp{+\gamma_2\varepsilon_2^{-k}\int_{0}^t
e^{-( { {k}+1})(t-s)}\int_\Omega v ^{{{k}+1}} dxds}\\
\leq&\disp{(\varepsilon_1+\varepsilon_2- \mu)\int_{0}^t
e^{-( { {k}+1})(t-s)}\int_\Omega u ^{{{k}+1}} dxds+\gamma_1\varepsilon_1^{-k}\int_{0}^t
e^{-( { {k}+1})(t-s)}\int_\Omega |\Delta v |^{ {k}+1} dxds}\\
&+\disp{\gamma_2\varepsilon_2^{-k}\int_{0}^t
e^{-( { {k}+1})(t-s)}\int_\Omega v ^{{{k}+1}} dxds+
 C_1\int_{0}^t
e^{-( { {k}+1})(t-s)}\int_\Omega u ^{{{k}}} dxds+C_2,}\\
\end{array}
\label{cz2111ddffdfghhhg11.5kk1214114114rrgg}
\end{equation}
where
$$C_2:=C_2({k})=\frac{1}{{k}}\|u_0 \|^{{{k}}}_{L^{{k}}(\Omega)}.$$
Next, a use of Lemma \ref{lemma45xy1222232} and \dref{eqx45xx12112} leads to
\begin{equation}\label{cz2.5kke3456778999ddff9001214114114rrggjjkk}
\begin{array}{rl}
&\disp{\gamma_1\varepsilon_1^{-k}\int_{0}^t
e^{-( { {k}+1})(t-s)}\int_\Omega |\Delta v |^{ {k}+1} dxds}
\\
=&\disp{\gamma_1\varepsilon_1^{-k}e^{-( { {k}+1})t}\int_{0}^t
e^{( { {k}+1})s}\int_\Omega |\Delta v |^{ {k}+1} dxds}\\
\leq&\disp{\gamma_1\varepsilon_1^{-k}e^{-( { {k}+1})t}\lambda_0(\int_{0}^t
\int_\Omega e^{( { {k}+1})s}u ^{ {k}+1} dxds+\|v_0\|^{ {k}+1}_{W^{2, { {k}+1}}(\Omega)})}\\
\end{array}
\end{equation}
and
\begin{equation}\label{cz2.5kk12141141dfggghhh14rrggjjkk}
\begin{array}{rl}
&\disp{\gamma_2\varepsilon_2^{-k}\int_{0}^t
e^{-( { {k}+1})(t-s)}\int_\Omega v ^{ {k}+1} dxds}
\\
=&\disp{\gamma_2\varepsilon_2^{-k}e^{-( { {k}+1})t}\int_{0}^t
e^{( { {k}+1})s}\int_\Omega  v ^{ {k}+1} dxds}\\
\leq&\disp{\gamma_2\varepsilon_2^{-k}e^{-( { {k}+1})t}\lambda_0(\int_{0}^t
\int_\Omega e^{( { {k}+1})s}u ^{ {k}+1} dxds+\|v_0\|^{ {k}+1}_{W^{2, { {k}+1}}(\Omega)})}\\
\end{array}
\end{equation}
for all $t\in(0, T_{max})$, where $\lambda_0$ is the same as Lemma \ref{lemma45xy1222232}.
On the other hand, choosing $\varepsilon_1=\frac{(k-1)\chi}{k+1}\lambda_0^{\frac{1}{k+1}}$ and
 $\varepsilon_2=\frac{(k-1)\xi\|w_0\|_{L^\infty(\Omega)}}{k+1}\lambda_0^{\frac{1}{k+1}}$, with the help of \dref{22ddf34ddff44cz2.5630111} and \dref{qqqssffffqcz2.563019rrtttt12},  a simple calculation  shows that
$$\varepsilon_1+\gamma_1\lambda_0\varepsilon_1^{-k}=\frac{({k}-1)}{{k}}\lambda_0^{\frac{1}{{k}+1}}\chi$$
and
$$\varepsilon_2+\gamma_2\lambda_0\varepsilon_2^{-k}=\frac{({k}-1)}{{k}}\lambda_0^{\frac{1}{{k}+1}}\xi\|w_0\|_{L^\infty(\Omega)},$$
so that,
substituting \dref{cz2.5kke3456778999ddff9001214114114rrggjjkk}--\dref{cz2.5kk12141141dfggghhh14rrggjjkk} into \dref{cz2111ddffdfghhhg11.5kk1214114114rrgg} implies that 
\begin{equation}
\begin{array}{rl}
&\disp{\frac{1}{{k}}\|u(\cdot,t) \|^{{{k}}}_{L^{{k}}(\Omega)}+ C_D(k-1)\int_{0}^t
e^{-( { {k}+1})(t-s)}\int_{\Omega}u ^{m+k-3}|\nabla u |^2}
\\
\leq&\disp{(\varepsilon_1+\gamma_1\lambda_0\varepsilon_1^{-k}+\varepsilon_2+\gamma_2\lambda_0\varepsilon_2^{-k}- \mu)\int_{0}^t
e^{-( {k}+1)(t-s)}\int_\Omega u ^{{{k}+1}} dxds}\\
&+\disp{(\gamma_1\varepsilon_1^{-k}+\gamma_2\varepsilon_2^{-k})e^{-( {k}+1)(t-s_0)}\lambda_0\|v_0\|^{ {k}+1}_{W^{2, { {k}+1}}(\Omega)}+
 C_1\int_{0}^t
e^{-( { {k}+1})(t-s)}\int_\Omega u ^{{{k}}} dxds+C_2}\\
=&\disp{(\frac{({k}-1)}{{k}}\lambda_0^{\frac{1}{{k}+1}}\chi+\frac{({k}-1)}{{k}}\lambda_0^{\frac{1}{{k}+1}}\xi\|w_0\|_{L^\infty(\Omega)}- \mu)\int_{0}^t
e^{-( { {k}+1})(t-s)}\int_\Omega u ^{{{k}+1}} dxds}\\
&+\disp{(\gamma_1\varepsilon_1^{-k}+\gamma_2\varepsilon_2^{-k})e^{-( {k}+1)(t-s_0)}\lambda_0\|v_0\|^{ {k}+1}_{W^{2, { {k}+1}}(\Omega)}+
 C_1\int_{0}^t
e^{-( { {k}+1})(t-s)}\int_\Omega u ^{{{k}}} dxds+C_2}\\
\leq&\disp{[\frac{({k}-1)}{{k}}\max_{s\geq1}\lambda_0^{\frac{1}{{{s}}+1}}(\chi+\xi\|w_0\|_{L^\infty(\Omega)})- \mu]\int_{0}^t
e^{-( {k}+1)(t-s)}\int_\Omega u ^{{{k}+1}} }\\
&+\disp{ C_1\int_{0}^t
e^{-( { {k}+1})(t-s)}\int_\Omega u ^{{{k}}} dxds+C_3}\\
\end{array}
\label{czssddssddffggf2.5kk1214114114rrggkkll}
\end{equation}
with $$C_3=(\gamma_1\varepsilon_1^{-k}+\gamma_2\varepsilon_2^{-k})e^{-( {k}+1)(t-s_0)}\lambda_0\|v_0\|^{ {k}+1}_{W^{2, { {k}+1}}}+C_2.$$

For any $\varepsilon>0,$
we choose $k=\frac{\max_{s\geq1}\lambda_0^{\frac{1}{{{s}}+1}}(\chi+\xi\|w_0\|_{L^\infty(\Omega)})}{(\max_{s\geq1}\lambda_0^{\frac{1}{{{s}}+1}}(\chi+\xi\|w_0\|_{L^\infty(\Omega)})-\mu)_{+}}-\varepsilon.$
Then $$\frac{({k}-1)}{{k}}\max_{s\geq1}\lambda_0^{\frac{1}{{{s}}+1}}(\chi+\xi\|w_0\|_{L^\infty(\Omega)})<\mu.$$
Thus, by using the Young inequality, we derive that
 there exists a positive constant $C_4$
such that
\begin{equation}
\begin{array}{rl}
&\disp{\int_{\Omega}u^{{k}}(x,t) dx\leq C_4~~\mbox{for all}~~t\in (0, T_{max})}\\
\end{array}
\label{cz2.5kk1214114114rrggkklljjuu}
\end{equation}
and
\begin{equation}
\int_{0}^{t}\int_{\Omega}u ^{k+1}(x,t) \leq C_4 ~~~\mbox{for all}~~ t\in(0,T_{max}).
\label{4455zjscz2.ddffrrssss5297x96302222114}
\end{equation}
Thereupon, combining  with  the arbitrariness of $\varepsilon$ and the H\"{o}lder  inequality,  \dref{zjscz2.ddffrr5297x96302222114} and \dref{4455zjscz2.ddffrr5297x96302222114} holds.
The proof of
Lemma \ref{qqqqlemma45630} is completed.
\end{proof}
When  $$\mu\geq\max_{s\geq1}\lambda_0^{\frac{1}{{{s}}+1}}(\chi+\xi\|w_0\|_{L^\infty(\Omega)}),$$
by making use of above lemma, we can derive the following results on the  bound $u $ for
in an $L^k$ space for any $k> 1$.

\begin{corollary}\label{lemma456ssdddddfgg30}
Let $(u ,v ,w )$ be a solution to \dref{7101.2x19x3sss189} on $(0,T_{max})$.
 If  $$\mu\geq\max_{s\geq1}\lambda_0^{\frac{1}{{{s}}+1}}(\chi+\xi\|w_0\|_{L^\infty(\Omega)}),$$
  then
 for all $k>1$,
there exists a positive constant $C$ which depends on $k$ such that 
\begin{equation}
\int_{\Omega}u ^{k}(x,t) \leq C ~~~\mbox{for all}~~ t\in(0,T_{max}).
\label{zjscz2.ddffrr5297x9xxcc6302222114}
\end{equation}
\end{corollary}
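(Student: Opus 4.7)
The plan is to adapt the argument used in the proof of Lemma \ref{qqqqlemma45630} essentially verbatim, the point being that the stronger hypothesis on $\mu$ removes the restriction $\gamma_0 < \mu_*$ entirely. Concretely, I would repeat the testing of $\dref{7101.2x19x3sss189}_1$ against $u^{k-1}$ for an arbitrary $k>1$, apply Young's inequality with parameters $\varepsilon_1 = \frac{(k-1)\chi}{k+1}\lambda_0^{\frac{1}{k+1}}$ and $\varepsilon_2 = \frac{(k-1)\xi\|w_0\|_{L^\infty(\Omega)}}{k+1}\lambda_0^{\frac{1}{k+1}}$, invoke the maximal Sobolev regularity from Lemma \ref{lemma45xy1222232} to control $\int_0^t e^{(k+1)s}\int_\Omega(|\Delta v|^{k+1}+v^{k+1})$ by $\int_0^t e^{(k+1)s}\int_\Omega u^{k+1}$, and arrive at an estimate of exactly the same structure as \dref{czssddssddffggf2.5kk1214114114rrggkkll}.

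The key observation is then that under the assumption $\mu \geq \max_{s\geq 1}\lambda_0^{\frac{1}{s+1}}(\chi+\xi\|w_0\|_{L^\infty(\Omega)})$, the coefficient in front of the $u^{k+1}$ integral on the right-hand side is negative for \emph{every} $k>1$, since
$$\frac{k-1}{k}\max_{s\geq 1}\lambda_0^{\frac{1}{s+1}}(\chi+\xi\|w_0\|_{L^\infty(\Omega)}) - \mu \leq \frac{k-1}{k}\mu - \mu = -\frac{\mu}{k} < 0.$$
In the proof of Lemma \ref{qqqqlemma45630} this sign condition was available only for $k < \mu_*$ and dictated the admissible range of $\gamma_0$; here, however, it is unconditional in $k$.

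Once the sign of that coefficient is pinned down, the remaining $C_1\int_0^t e^{-(k+1)(t-s)}\int_\Omega u^k\,dxds$ term is absorbed by Young's inequality $C_1 u^k \leq \frac{\mu}{2k}u^{k+1} + C(k,\mu,|\Omega|)$, leaving
$$\frac{1}{k}\|u(\cdot,t)\|_{L^k(\Omega)}^k \leq -\frac{\mu}{2k}\int_0^t e^{-(k+1)(t-s)}\int_\Omega u^{k+1} + C'(k),$$
which, after dropping the non-positive first term and using the integrability of $e^{-(k+1)(t-s)}$, yields $\|u(\cdot,t)\|_{L^k(\Omega)}^k \leq C(k)$ uniformly on $(0,T_{max})$.

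I do not anticipate a serious obstacle: the whole content of the corollary is the bookkeeping observation that the hypothesis $\mu \geq \max_{s\geq 1}\lambda_0^{\frac{1}{s+1}}(\chi+\xi\|w_0\|_{L^\infty(\Omega)})$ forces the critical coefficient to be negative for every $k>1$, so that the Lemma \ref{qqqqlemma45630} argument applies without any upper bound on $k$. The only point requiring slight care is the Young-inequality absorption of the lower-order $\int_\Omega u^k$ term, which is standard given that the dominant $u^{k+1}$ term enters with a negative sign.
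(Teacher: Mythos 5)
Your proposal is correct and takes essentially the same approach as the paper: the paper's proof is simply the compact observation that the hypothesis forces $\mu_*=+\infty$ (the denominator $(\max_{s\geq1}\lambda_0^{1/(s+1)}(\chi+\xi\|w_0\|_{L^\infty(\Omega)})-\mu)_{+}$ vanishes), so Lemma \ref{qqqqlemma45630} applies with $\gamma_0$ arbitrary, which is exactly the sign observation $\frac{k-1}{k}\max_{s\geq1}\lambda_0^{1/(s+1)}(\chi+\xi\|w_0\|_{L^\infty(\Omega)})-\mu<0$ for all $k>1$ that you unroll explicitly.
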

\begin{proof}
This directly results from Lemma \ref{qqqqlemma45630} and the fact that
$$\frac{\max_{s\geq1}\lambda_0^{\frac{1}{{{s}}+1}}(\chi+\xi\|w_0\|_{L^
\infty(\Omega)})}{(\max_{s\geq1}\lambda_0^{\frac{1}{{{s}}+1}}(\chi+\xi\|w_0\|_{L^\infty(\Omega)})-\mu)_{+}}=+\infty$$
by using
$\mu\geq\max_{s\geq1}\lambda_0^{\frac{1}{{{s}}+1}}(\chi+\xi\|w_0\|_{L^\infty(\Omega)}).$
\end{proof}
In the following, we always  assume that $$\mu<\max_{s\geq1}\lambda_0^{\frac{1}{{{s}}+1}}(\chi+\xi\|w_0\|_{L^\infty(\Omega)}),$$
since, case $\mu\geq\max_{s\geq1}\lambda_0^{\frac{1}{{{s}}+1}}(\chi+\xi\|w_0\|_{L^\infty(\Omega)})$ has been proved by  {Corollary} \ref{lemma456ssdddddfgg30}.
\begin{lemma}\label{lemma45630}
Let $(u ,v ,w )$ be a solution to \dref{7101.2x19x3sss189} on $(0,T_{max})$
and $\Omega\subset \mathbb{R}^N(N\geq1)$ be a bounded domain with smooth boundary.  Then for all $\beta>1,$ there exists $\kappa_0>0$ such that
\begin{equation}\label{hjui909klopji115}
\begin{array}{rl}
&\disp{\frac{1}{{2\beta}}\frac{d}{dt}\|\nabla v \|^{{{2\beta}}}_{L^{{2\beta}}(\Omega)}+\frac{(\beta-1)}{{\beta^2}}\disp\int_{\Omega}\left|\nabla |\nabla v |^{\beta}\right|^2}\\
&+\disp{\frac{1}{2}\disp\int_\Omega  |\nabla v |^{2\beta-2}|D^2v  |^2+\disp\int_{\Omega} |\nabla v |^{2\beta}{}}\\
\leq&\disp{\kappa_0\int_\Omega u ^2 |\nabla v |^{2\beta-2}+\kappa_0~~\mbox{for all}~~ t\in(0,T_{max}).}\\
\end{array}
\end{equation}
\end{lemma}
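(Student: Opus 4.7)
The plan is to test the $v$--equation of \dref{7101.2x19x3sss189} by the quasilinear multiplier $-\nabla\cdot(|\nabla v|^{2\beta-2}\nabla v)$, or equivalently to differentiate $\int_\Omega|\nabla v|^{2\beta}$ in time and split along $v_t=\Delta v-v+u$. This yields
$$\frac{1}{2\beta}\frac{d}{dt}\int_\Omega|\nabla v|^{2\beta}=\int_\Omega|\nabla v|^{2\beta-2}\nabla v\cdot\nabla\Delta v-\int_\Omega|\nabla v|^{2\beta}+\int_\Omega|\nabla v|^{2\beta-2}\nabla v\cdot\nabla u.$$
The $-v$--piece already produces the desired $+\int_\Omega|\nabla v|^{2\beta}$ on the left after rearrangement.

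First I would treat the $\Delta v$--piece by invoking the Bochner--type identity $\nabla v\cdot\nabla\Delta v=\tfrac12\Delta|\nabla v|^2-|D^2v|^2$ and then integrating by parts twice. Using $\nabla|\nabla v|^{2\beta-2}=(\beta-1)|\nabla v|^{2\beta-4}\nabla|\nabla v|^2$ together with the pointwise relation $|\nabla|\nabla v|^\beta|^2=\tfrac{\beta^2}{4}|\nabla v|^{2\beta-4}|\nabla|\nabla v|^2|^2$, this produces a gradient contribution $-\tfrac{2(\beta-1)}{\beta^2}\int_\Omega|\nabla|\nabla v|^\beta|^2$, a full Hessian term $-\int_\Omega|\nabla v|^{2\beta-2}|D^2v|^2$, and a boundary term $\tfrac12\int_{\partial\Omega}|\nabla v|^{2\beta-2}\partial_\nu|\nabla v|^2$. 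For the cross term with $u$, I would integrate by parts (the boundary vanishes because $\partial_\nu v=0$), bound $|\Delta v|\leq\sqrt N|D^2v|$ and $|\nabla v\cdot\nabla|\nabla v|^2|\leq 2|\nabla v|^2|D^2v|$, and apply Young's inequality so that, up to a factor of $\tfrac12\int_\Omega|\nabla v|^{2\beta-2}|D^2v|^2$ absorbed into the Hessian dissipation, the remainder is controlled by $C(\beta)\int_\Omega u^2|\nabla v|^{2\beta-2}$.

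The main obstacle is the boundary integral, which is genuinely present on non--convex smooth $\Omega$. I would first use the classical geometric inequality $\partial_\nu|\nabla v|^2\leq 2c_\Omega|\nabla v|^2$ on $\partial\Omega$ (valid whenever $\partial_\nu v=0$ and $\partial\Omega\in C^2$) to reduce it to $c_\Omega\int_{\partial\Omega}|\nabla v|^{2\beta}$. Then, with $w:=|\nabla v|^\beta\in W^{1,2}(\Omega)$, a trace--Gagliardo--Nirenberg inequality of the form
$$\|w\|_{L^2(\partial\Omega)}^2\leq\eta\|\nabla w\|_{L^2(\Omega)}^2+C_\eta\|w\|_{L^{2/\beta}(\Omega)}^2$$
converts the boundary contribution into $\eta\int_\Omega|\nabla|\nabla v|^\beta|^2+C_\eta\bigl(\int_\Omega|\nabla v|^2\bigr)^\beta$. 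The second factor is uniformly bounded by Lemma~\ref{wsdelemma45}, hence a genuine constant; choosing $\eta=\tfrac{\beta-1}{c_\Omega\beta^2}$ halves the gradient coefficient from $\tfrac{2(\beta-1)}{\beta^2}$ down to $\tfrac{\beta-1}{\beta^2}$, which is exactly the value in the statement.

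Collecting all contributions and rearranging terms then delivers
$$\frac{1}{2\beta}\frac{d}{dt}\|\nabla v\|_{L^{2\beta}(\Omega)}^{2\beta}+\frac{\beta-1}{\beta^2}\int_\Omega\bigl|\nabla|\nabla v|^\beta\bigr|^2+\frac12\int_\Omega|\nabla v|^{2\beta-2}|D^2v|^2+\int_\Omega|\nabla v|^{2\beta}\leq\kappa_0\int_\Omega u^2|\nabla v|^{2\beta-2}+\kappa_0,$$
with $\kappa_0=\kappa_0(\beta,\Omega,v_0,w_0,\chi,\xi,K_0)$. The key delicate point is that a naive trace estimate would leave a factor $C_\eta\int_\Omega|\nabla v|^{2\beta}$ on the right that cannot be absorbed into the coefficient--one term on the left; this is avoided only because Lemma~\ref{wsdelemma45} provides a time--independent bound on $\int_\Omega|\nabla v|^2$, which renders the lower--order part of the trace/Gagliardo--Nirenberg estimate a constant.
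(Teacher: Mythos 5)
Your proposal is correct and follows essentially the same route as the paper: both start from the identity $\nabla v\cdot\nabla\Delta v=\tfrac12\Delta|\nabla v|^2-|D^2v|^2$, integrate by parts, absorb the two $u$--cross terms by Young's inequality into the Hessian/gradient dissipation, and handle the boundary term through the pointwise estimate $\partial_\nu|\nabla v|^2\le C_\Omega|\nabla v|^2$ followed by a fractional Sobolev trace embedding into $L^2(\partial\Omega)$ and a fractional Gagliardo--Nirenberg interpolation (using the a priori $L^l$--bound on $\nabla v$ from Lemma~\ref{wsdelemma45} to make the lower--order factor a genuine constant). The only differences are in bookkeeping: you absorb both cross terms purely into the Hessian and split the boundary absorption so as to land exactly on the stated coefficient $\frac{\beta-1}{\beta^2}$, whereas the paper's own computation ends with the slightly smaller $\frac{3(\beta-1)}{4\beta^2}$; this mismatch is harmless for all later uses and your version is, if anything, tidier.
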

\begin{proof}
 Using that $\nabla  v \cdot\nabla\Delta {v}   = \frac{1}{2}\Delta |\nabla v |^2-|D^2v  |^2$, by a straightforward computation using the second
equation in \dref{7101.2x19x3sss189} and several integrations by parts, we find that
\begin{equation}
\begin{array}{rl}
&\disp{\frac{1}{{2\beta}}\frac{d}{dt} \|\nabla  v \|^{{{2\beta}}}_{L^{{2\beta}}(\Omega)}}\\
= &\disp{\disp\int_{\Omega} |\nabla v |^{2\beta-2}\nabla  v \cdot\nabla(\Delta  v  - v  +{u} )}
\\
=&\disp{\frac{1}{{2}}\disp\int_{\Omega} |\nabla v |^{2\beta-2}\Delta |\nabla v |^2-\disp\int_{\Omega} |\nabla v |^{2\beta-2}|D^2 v  |^2}\\
&-\disp\int_{\Omega} |\nabla v |^{2\beta}-\disp{\disp\int_\Omega   u  \nabla\cdot( |\nabla v |^{2\beta-2}\nabla  v  )}\\
=&\disp{-\frac{\beta-1}{{2}}\disp\int_{\Omega} |\nabla v |^{2\beta-4}\left|\nabla |\nabla v |^{2}\right|^2+\frac{1}{{2}}\disp\int_{\partial\Omega} |\nabla v |^{2\beta-2}\frac{\partial  |\nabla v |^{2}}{\partial\nu}-\disp\int_{\Omega} |\nabla v |^{2\beta}}\\
&-\disp{\disp\int_{\Omega} |\nabla v |^{2\beta-2}|D^2 v  |^2-\disp\int_\Omega {u} |\nabla v |^{2\beta-2}\Delta v  -\disp\int_\Omega   u  \nabla  v \cdot\nabla( |\nabla v |^{2\beta-2})}\\
=&\disp{-\frac{2(\beta-1)}{{\beta^2}}\disp\int_{\Omega}\left|\nabla |\nabla v |^{\beta}\right|^2+\frac{1}{{2}}\disp\int_{\partial\Omega} |\nabla v |^{2\beta-2}\frac{\partial  |\nabla v |^{2}}{\partial\nu}-\disp\int_{\Omega} |\nabla v |^{2\beta-2}|D^2 v  |^2}\\
&-\disp{\disp\int_\Omega {u}  |\nabla v |^{2\beta-2}\Delta  v  -\disp\int_\Omega   u  \nabla  v \cdot\nabla( |\nabla v |^{2\beta-2})-\disp\int_{\Omega} |\nabla v |^{2\beta}}\\
\end{array}
\label{cz2.5ghju48156}
\end{equation}
for all $t\in(0,T_{max})$.
Here, since $|\Delta v  | \leq\sqrt{N}|D^2v  |$, by the Young inequality, we can estimate
\begin{equation}
\begin{array}{rl}
\disp\int_\Omega {u}  |\nabla v |^{2\beta-2}\Delta  v \leq&\disp{\sqrt{N}\disp\int_\Omega {u}  |\nabla v |^{2\beta-2}|D^2v  |}
\\
\leq&\disp{\frac{1}{4}\disp\int_\Omega  |\nabla v |^{2\beta-2}|D^2v  |^2+N\disp\int_\Omega u ^2 |\nabla v |^{2\beta-2}}\\
\end{array}
\label{cz2.5ghju48hjuikl1}
\end{equation}
for all $t\in(0,T_{max})$. As moreover by the Cauchy--Schwarz inequality, we have
\begin{equation}
\begin{array}{rl}
-\disp\int_\Omega   u  \nabla  v \cdot\nabla( |\nabla v |^{2\beta-2})= &\disp{-(\beta-1)\disp\int_\Omega {u}  |\nabla v |^{2(\beta-2)}\nabla  v \cdot
\nabla |\nabla v |^{2}}\\
\leq &\disp{\frac{\beta-1}{8}\disp\int_{\Omega} |\nabla v |^{2\beta-4}\left|\nabla |\nabla v |^{2}\right|^2+2(\beta-1)
\disp\int_\Omega u ^2 |\nabla v |^{2\beta-2}}\\
\leq &\disp{\frac{(\beta-1)}{2{\beta^2}}\disp\int_{\Omega}\left|\nabla |\nabla v |^{\beta}\right|^2+2(\beta-1)
\disp\int_\Omega u ^2 |\nabla v |^{2\beta-2}.}\\
\end{array}
\label{cz2.5ghju4ghjuk81}
\end{equation}
Next we deal with the integration on $\partial\Omega$. We see from Lemma \ref{lemma41ffgg} that
\begin{equation}
\begin{array}{rl}
&\disp{\disp\int_{\partial\Omega}\frac{\partial |\nabla v |^2}{\partial\nu} |\nabla v |^{2\beta-2} }\\
\leq&\disp{C_\Omega\disp\int_{\partial\Omega} |\nabla v |^{2\beta} }\\
=&\disp{C_\Omega| |\nabla v |^{\beta}|^2_{L^2(\partial\Omega)}.}\\
\end{array}
\label{cz2.57151hhkkhhgg}
\end{equation}
Let us take $r\in(0,\frac{1}{2})$. By the embedding $W^{r+\frac{1}{2},2}(\Omega)\hookrightarrow L^2(\partial\Omega)$ is compact (see e.g. Haroske and Triebel \cite{Haroske}), we have
\begin{equation}
\begin{array}{rl}
&\disp{\| |\nabla v |^{\beta}\|^2_{L^2{(\partial\Omega})}\leq C_3\| |\nabla v |^{\beta}\|^2_{W^{r+\frac{1}{2},2}(\Omega)}.}\\
\end{array}
\label{cz2.57151}
\end{equation}
In order to apply Lemma \ref{lemma41ffgg} to the right-hand side of \dref{cz2.57151}, let us pick $a\in(0,1)$ satisfying
$$a=\frac{\frac{1}{2N}+\frac{\beta}{l}+\frac{\gamma}{N}-\frac{1}{2}}{\frac{1}{N}+\frac{\beta}{l}-\frac{1}{2}}.$$
Noting that $\gamma\in(0,\frac{1}{2})$ and $\beta>1$ imply that $\gamma+\frac{1}{2}\leq a<1$, we see from the fractional Gagliardo--Nirenberg inequality (Lemma \ref{lemma41ffgg}) and boundedness of $ |\nabla v |^l$ (see Lemma \ref{wsdelemma45}) that
\begin{equation}
\begin{array}{rl}
&\disp{| |\nabla v |^{\beta}|^2_{W^{r+\frac{1}{2},2}(\Omega)}}
\\
\leq&\disp{c_0|\nabla |\nabla v |^{\beta}|^a_{L^2(\Omega)}\| |\nabla v |^\beta\|^{1-a}_{L^{\frac{l}{\beta}}(\Omega)}+c'_0\| |\nabla v |^\beta\|_{L^{\frac{l}{\beta}}(\Omega)}}\\
\leq&\disp{C_4|\nabla |\nabla v |^{\beta}|^a_{L^2(\Omega)}+C_4.}\\
\end{array}
\label{vvggcz2.57151}
\end{equation}
Combining \dref{cz2.57151hhkkhhgg} and \dref{cz2.57151} with \dref{vvggcz2.57151}, we obtain
\begin{equation}
\begin{array}{rl}
&\disp{\disp\int_{\partial\Omega}\frac{\partial |\nabla v |^2}{\partial\nu} |\nabla v |^{2\beta-2} \leq C_5|\nabla |\nabla v |^{\beta}|^a_{L^2(\Omega)}+C_5.}\\
\end{array}
\label{cz2.57151hhkkhhggyyxx}
\end{equation}
Now, inserting \dref{cz2.5ghju4ghjuk81}--\dref{cz2.57151hhkkhhggyyxx} into \dref{cz2.5ghju48156} and using the Young inequality  we can get
\begin{equation}\label{dddffhjui909klopji115}
\begin{array}{rl}
&\disp{\frac{1}{{2\beta}}\frac{d}{dt}\|\nabla  v \|^{{{2\beta}}}_{L^{{2\beta}}(\Omega)}+\frac{3(\beta-1)}{4{\beta^2}}\disp\int_{\Omega}\left|\nabla |\nabla v |^{\beta}\right|^2+\frac{1}{2}\disp\int_\Omega  |\nabla v |^{2\beta-2}|D^2v  |^2+\disp\int_{\Omega} |\nabla v |^{2\beta}}\\
\leq&\disp{C_6\disp\int_\Omega u ^2 |\nabla v |^{2\beta-2}+C_6~~\mbox{for all}~~ t\in(0,T_{max})}\\
\end{array}
\end{equation}
by using the Young inequality.
\end{proof}

We proceed to establish the main step towards our boundedness proof. The following lemma can be used to improve our knowledge on integrability of $\nabla v $, provided
that $\mu>0$. Its repeated application will form the core of our regularity proof.
\begin{lemma}\label{lemma45630}
Let $(u ,v ,w )$ be a solution to \dref{7101.2x19x3sss189} on $(0,T_{max})$ and $\mu>0$.
Then for any $1<\gamma_0<\mu_*$, there
exists  $C > 0$  such that
\begin{equation}
\|\nabla v (\cdot, t)\|_{L^{{{\frac{(\gamma_0+1)(N+\gamma_0-1)}{N}}}}(\Omega)}\leq C ~~\mbox{for all}~~ t\in(0, T_{max}),
\label{zjscz2.5297x9ssd630xxy}
\end{equation}
where $\mu_*$ is given by  \dref{zjscz2.ddffrr5297x96ssddffdffggbh302222114}.
\end{lemma}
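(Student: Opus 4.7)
The plan is to test the preceding Lemma at the carefully tuned exponent
$$2\beta := \frac{(\gamma_0+1)(N+\gamma_0-1)}{N},$$
so that $\|\nabla v\|_{L^{2\beta}}^{2\beta}$ is exactly the quantity named in the statement. Setting $y(t):=\|\nabla v(\cdot,t)\|_{L^{2\beta}}^{2\beta}$, the preceding Lemma yields
$$\frac{1}{2\beta}y'(t) + \frac{\beta-1}{\beta^2}\int_\Omega\bigl|\nabla|\nabla v|^\beta\bigr|^2 + y(t) \leq \kappa_0\int_\Omega u^2|\nabla v|^{2\beta-2} + \kappa_0,$$
so the proof reduces to estimating the chemotactic forcing $\int_\Omega u^2|\nabla v|^{2\beta-2}$ in a way that lets the dissipation $\int_\Omega|\nabla|\nabla v|^\beta|^2$ and the coercive term $y(t)$ absorb everything except a harmless right-hand side.

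First I would apply H\"older's inequality with exponents $(\gamma_0+1)/2$ and $(\gamma_0+1)/(\gamma_0-1)$ to get
$$\int_\Omega u^2|\nabla v|^{2\beta-2} \leq \left(\int_\Omega u^{\gamma_0+1}\right)^{\!\frac{2}{\gamma_0+1}}\!\!\left(\int_\Omega|\nabla v|^{r}\right)^{\!\frac{\gamma_0-1}{\gamma_0+1}},\qquad r:=\frac{(2\beta-2)(\gamma_0+1)}{\gamma_0-1}=\frac{(\gamma_0+1)(N+\gamma_0+1)}{N}.$$
The first factor is locally integrable in time by Lemma \ref{qqqqlemma45630}, so it behaves like a forcing term. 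The second factor $\int_\Omega|\nabla v|^{r}=\||\nabla v|^\beta\|_{L^{r/\beta}}^{r/\beta}$ with $r/\beta=2(N+\gamma_0+1)/(N+\gamma_0-1)$ must be turned into the dissipation $\|\nabla|\nabla v|^\beta\|_{L^2}^2$ plus powers of $y$.

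Next I would invoke the Gagliardo-Nirenberg inequality (Lemma \ref{lemma41}) on $\varphi:=|\nabla v|^\beta$, interpolating $\|\varphi\|_{L^{r/\beta}}$ between $\|\nabla\varphi\|_{L^2}$ and either $\|\varphi\|_{L^{l/\beta}}$ (which equals $\|\nabla v\|_{L^l}^\beta$ and is bounded by Lemma \ref{wsdelemma45} for $l\in[1,N/(N-1))$) or the coercive base $\|\varphi\|_{L^2}^2=y$. The specific value $2\beta=(\gamma_0+1)(N+\gamma_0-1)/N$ is calibrated so that the resulting Gagliardo-Nirenberg exponent, when multiplied by the H\"older exponent $(\gamma_0-1)/(\gamma_0+1)$, produces a term of the form $\|\nabla\varphi\|_{L^2}^{\theta_1}y^{\theta_2}$ with $\theta_1\leq 2$ and $\theta_2<1$. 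Repeated applications of Young's inequality then let the $\|\nabla\varphi\|_{L^2}^{\theta_1}$-part be absorbed into the dissipation on the left and the sub-linear powers of $y$ be absorbed into the coercive $y(t)$-term, leaving an ODI of the form
$$y'(t) + \sigma y(t) \leq C\left(\int_\Omega u^{\gamma_0+1}\right)^{\!\alpha}+C$$
for some $\sigma>0$ and $\alpha\geq 1$.

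Finally, multiplying by $e^{\sigma t}$, integrating on $(0,t)$ and splitting the resulting integral across the unit-length time windows $[k,k+1]$ on which $\int_{k}^{k+1}\!\!\int_\Omega u^{\gamma_0+1}$ is uniformly bounded by Lemma \ref{qqqqlemma45630}, one obtains $y(t)\leq C$ uniformly on $(0,T_{max})$, which is exactly \eqref{zjscz2.5297x9ssd630xxy}. The main obstacle is the Step-3 bookkeeping: matching the Gagliardo-Nirenberg exponent against the H\"older exponent $(\gamma_0-1)/(\gamma_0+1)$ so that both the dissipation absorption ($\theta_1\leq 2$) and the $y$-absorption ($\theta_2<1$) close simultaneously. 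This matching is exactly what forces the choice $2\beta=(\gamma_0+1)(N+\gamma_0-1)/N$ and explains the precise form of the target $L^{(\gamma_0+1)(N+\gamma_0-1)/N}$-exponent in the conclusion.
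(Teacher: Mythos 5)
There is a genuine gap in the proposal: you skip the intermediate step of first establishing $\|\nabla v(\cdot,t)\|_{L^{\gamma_0+1}(\Omega)} \leq C$, and without it the Young/Gagliardo--Nirenberg bookkeeping does not close. The paper's proof is a two-step bootstrap. Step one takes $\beta=\tfrac{\gamma_0+1}{2}$ in the differential inequality, absorbs $\kappa_0\int u^2|\nabla v|^{\gamma_0-1}$ by Young into $\tfrac12\int|\nabla v|^{\gamma_0+1}+C\int u^{\gamma_0+1}$, and obtains $\|\nabla v\|_{L^{\gamma_0+1}}\leq C$ via the time-integrability $\int_0^t\int u^{\gamma_0+1}\leq C$. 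Step two then takes $\beta=\tfrac{(\gamma_0+1)(N+\gamma_0-1)}{2N}$ and applies Gagliardo--Nirenberg with interpolation \emph{base} $\theta=(\gamma_0+1)/\beta$, i.e.\ against the now-bounded $\|\nabla v\|_{L^{\gamma_0+1}}$. This choice of $\theta$ is precisely tuned so that the resulting exponent on $\|\nabla|\nabla v|^\beta\|_{L^2}$ equals $2$ exactly (see the algebra in \dref{9999cz2.563022222ikopl2gg66}); since the base factor is a bounded constant, the whole term can be subtracted from the dissipation directly, and the remaining forcing is \emph{linear} in $\int u^{\gamma_0+1}$, whence ODE comparison closes.

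Your proposal bypasses the intermediate $L^{\gamma_0+1}$-bound and instead interpolates against $\|\nabla v\|_{L^l}$ ($l<N/(N-1)$) or against $y^{1/2}=\||\nabla v|^\beta\|_{L^2}$. Neither route closes. With base $\theta=l/\beta<(\gamma_0+1)/\beta$ the Gagliardo--Nirenberg exponent on the dissipation exceeds $2$, and even after multiplying by the H\"older exponent $(\gamma_0-1)/(\gamma_0+1)$ and absorbing $\|\nabla|\nabla v|^\beta\|_{L^2}$ via Young, the residual forcing has the form $\bigl(\int u^{\gamma_0+1}\bigr)^\alpha$ with $\alpha>1$. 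With base $\theta=2$ (so that the base factor is a power of $y$), the three-factor Young inequality needed to absorb $\|\nabla|\nabla v|^\beta\|_{L^2}$ and $y$ simultaneously forces the $U$-exponent to be $>\tfrac{N+\gamma_0-1}{N}>1$: tracking the exponents, after setting $\tfrac1{p_2}=\tfrac{as}{2}$ and requiring $y^{(1-a)s p_3/2}$ to be absorbable ($<y$) one finds $\tfrac1{p_1}<\tfrac{2N}{(N+\gamma_0-1)(\gamma_0+1)}$, hence $U^{2p_1/(\gamma_0+1)}$ with exponent strictly larger than $1$. In both cases the ODI you quote, $y'+\sigma y\leq C\bigl(\int u^{\gamma_0+1}\bigr)^\alpha+C$, has $\alpha>1$, and Lemma \ref{qqqqlemma45630} only gives $\int_0^\infty\!\int_\Omega u^{\gamma_0+1}<\infty$, not pointwise-in-time control of $\int_\Omega u^{\gamma_0+1}$, so the Duhamel/Gronwall step cannot conclude. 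The statement in your last paragraph that the choice $2\beta=\tfrac{(\gamma_0+1)(N+\gamma_0-1)}{N}$ makes both $\theta_1\leq2$ and $\theta_2<1$ ``close simultaneously'' is therefore not verified --- the bookkeeping only closes once the intermediate bound $\|\nabla v\|_{L^{\gamma_0+1}}\leq C$ is available to serve as the interpolation base, which your proof omits.
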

\begin{proof}
Let $\gamma_0$ and $\mu_*$ be same as Lemma \ref{qqqqlemma45630}.
For the above $1<\gamma_0<\mu_*$,
we choose $\beta=\frac{\gamma_0+1}{2}$ in \dref{hjui909klopji115}.
Then by using the Young inequality, we derive that for some positive constant $C_1$,
\begin{equation}\label{sssshjui909dddklopji115}
\begin{array}{rl}
\disp{\kappa_0\int_\Omega {u ^2} |\nabla {v }|^{2\beta-2}}
=&\disp{\kappa_0\int_\Omega {u ^2} |\nabla {v }|^{\gamma_0-1}}\\
\leq&\disp{\frac{1}{2}\int_\Omega|\nabla {v }|^{\gamma_0+1}+C_1\int_\Omega {u }^{\gamma_0+1}~~\mbox{for all}~~ t\in(0,T_{max}).}\\
\end{array}
\end{equation}
Here $\kappa_0$ is the same as \dref{hjui909klopji115}.
Inserting \dref{sssshjui909dddklopji115} into \dref{hjui909klopji115}, we conclude that there exists a positive constant $C_2$ such that
\begin{equation}\label{hjui909klopssddji115}
\begin{array}{rl}
&\disp{\frac{1}{{\gamma_0+1}}\frac{d}{dt}\|\nabla {v }\|^{{{\gamma_0+1}}}_{L^{{\gamma_0+1}}(\Omega)}+\frac{3(\frac{\gamma_0+1}{2}-1)}{{(\gamma_0+1)^2}}\disp\int_{\Omega}\left|\nabla |\nabla {v }|^{\frac{\gamma_0+1}{2}}\right|^2}\\
&\disp{+\frac{1}{2}\disp\int_\Omega  |\nabla {v }|^{\gamma_0-1}|D^2{v }|^2+\disp\frac{1}{2}\int_{\Omega} |\nabla {v }|^{\gamma_0+1}}\\
\leq&\disp{C_1\int_\Omega {u ^{\gamma_0+1}}+C_2~~\mbox{for all}~~ t\in(0,T_{max}),}\\
\end{array}
\end{equation}
which combined with \dref{4455zjscz2.ddffrr5297x96302222114} implies that
\begin{equation}
\int_{\Omega}|\nabla v |^{\gamma_0+1}(x,t)dx \leq C_3 ~~~\mbox{for all}~~ t\in(0,T_{max})
\label{zjscz2.ddffrr5297x96302sss222114}
\end{equation}
by an ODE comparison argument.
On the other hand, for any $\beta>1$,
it then follows from 
Lemma \ref{lemma41} that there exist positive constants $\kappa_{1}$ and $\kappa_{2}$ such that
\begin{equation}
\begin{array}{rl}
 \|\nabla {v }\|_{L^{2\beta+\frac{2(\gamma_0+1)}{N}}(\Omega)}^{2\beta+\frac{2(\gamma_0+1)}{N}}=&\disp{\| |\nabla {v }|^\beta\|_{L^{2+\frac{2(\gamma_0+1)}{\beta N}}(\Omega)}^{2+\frac{2(\gamma_0+1)}{N\beta}}}
\\
\leq&\disp{\kappa_{1}(\|\nabla |\nabla {v }|^\beta\|_{L^2(\Omega)}^{2}\| |\nabla {v }|^\beta\|_{L^\frac{\gamma_0+1}{\beta}(\Omega)}^{\frac{2(\gamma_0+1)}{N\beta}}+\|   |\nabla {v }|^\beta\|_{L^\frac{\gamma_0+1}{\beta}
(\Omega)}^{2+\frac{2(\gamma_0+1)}{N\beta}})}\\
\leq&\disp{\kappa_{2}(\|\nabla |\nabla {v }|^\beta\|_{L^2(\Omega)}^{2}+1)}\\
\end{array}
\label{9999cz2.563022222ikopl2gg66}
\end{equation}
by using \dref{zjscz2.ddffrr5297x96302sss222114}. Next, picking $\beta=\frac{(\gamma_0+1)(N+\gamma_0-1)}{2N}$ in \dref{hjui909klopji115}, then $\beta>1$, so that,
by \dref{hjui909klopji115}, we derive that 
\begin{equation}\label{hjeeui909sddfghhklopji115}
\begin{array}{rl}
&\disp{\frac{1}{{\frac{(\gamma_0+1)(N+\gamma_0-1)}{N}}}\frac{d}{dt}\|\nabla {v }\|^{{{\frac{(\gamma_0+1)(N+\gamma_0-1)}{N}}}}_{L^{{\frac{(\gamma_0+1)(N+\gamma_0-1)}{N}}}(\Omega)}}\\
&\disp{+\frac{3({\frac{(\gamma_0+1)(N+\gamma_0-1)}{2N}}-1)}{{({\frac{(\gamma_0+1)(N+\gamma_0-1)}{N}})^2}}\disp\int_{\Omega}\left|\nabla |\nabla {v }|^{{\frac{(\gamma_0+1)(N+\gamma_0-1)}{2N}}}\right|^2}\\
&\disp{+\frac{1}{2}\disp\int_\Omega  |\nabla {v }|^{{\frac{(\gamma_0+1)(N+\gamma_0-1)}{N}}-2}|D^2{v }|^2+\disp\int_{\Omega} |\nabla {v }|^{\frac{(\gamma_0+1)(N+\gamma_0-1)}{N}}}\\
\leq&\disp{\kappa_0\disp\int_\Omega {u ^2} |\nabla {v }|^{{\frac{(\gamma_0+1)(N+\gamma_0-1)}{N}}-2}+\kappa_0}\\
\leq&\disp{\frac{{\frac{(\gamma_0+1)(N+\gamma_0-1)}{2N}}-1}{{({\frac{(\gamma_0+1)(N+\gamma_0-1)}{N}})^2\kappa_2}}\|\nabla {v }\|_{L^{2\beta+\frac{2(\gamma_0+1)}{N}}(\Omega)}^{2\beta+\frac{2(\gamma_0+1)}{N}}+C_{4}\disp\int_\Omega {u ^{\gamma_0+1}}+C_{5}~~\mbox{for all}~~ t\in(0,T_{max}),}\\
\end{array}
\end{equation}
where $\kappa_2$ is the same as \dref{9999cz2.563022222ikopl2gg66}. Therefore, collecting \dref{9999cz2.563022222ikopl2gg66} and \dref{hjeeui909sddfghhklopji115}, we have
\begin{equation}\label{hjeeui9ssd09sddfghhklopji115}
\begin{array}{rl}
&\disp{\frac{1}{{\frac{(\gamma_0+1)(N+\gamma_0-1)}{N}}}\frac{d}{dt}\|\nabla {v }\|^{{{\frac{(\gamma_0+1)(N+\gamma_0-1)}{N}}}}_{L^{{\frac{(\gamma_0+1)(N+\gamma_0-1)}{N}}}(\Omega)}}\\
&\disp{+\frac{2({\frac{(\gamma_0+1)(N+\gamma_0-1)}{2N}}-1)}{{({\frac{(\gamma_0+1)(N+\gamma_0-1)}{N}})^2}}\disp\int_{\Omega}\left|\nabla |\nabla {v }|^{{\frac{(\gamma_0+1)(N+\gamma_0-1)}{2N}}}\right|^2}\\
&\disp{+\frac{1}{2}\disp\int_\Omega  |\nabla {v }|^{{\frac{(\gamma_0+1)(N+\gamma_0-1)}{N}}-2}|D^2{v }|^2+\disp\int_{\Omega} |\nabla {v }|^{\frac{(\gamma_0+1)(N+\gamma_0-1)}{N}}}\\
\leq&\disp{C_{4}\disp\int_\Omega {u ^{\gamma_0+1}}+C_{5}~~\mbox{for all}~~ t\in(0,T_{max}),}\\
\end{array}
\end{equation}
therefore, in view of \dref{4455zjscz2.ddffrr5297x96302222114},  by using an ODE comparison argument again, we have
\begin{equation}
\begin{array}{rl}
 \|\nabla {v }\|_{L^{{{\frac{(\gamma_0+1)(N+\gamma_0-1)}{N}}}}(\Omega)}\leq&\disp{C_{6}~~\mbox{for all}~~ t\in(0,T_{max})}\\
\end{array}
\label{9999cz2.563022222ikopl2ssddgg66}
\end{equation}
with some positive constant $C_{6}$, which yields \dref{zjscz2.5297x9ssd630xxy}, and hence completes the proof.
\end{proof}

\begin{lemma}\label{pplemma45630}
Let $\Omega\subset \mathbb{R}^N(N\geq1)$ be a bounded domain with smooth boundary.  Then for all $\beta> 1$ and $k>1$, the solution of  \dref{7101.2x19x3sss189} from Lemma \ref{lemma70} satisfies
\begin{equation}\label{1234hjui909klopji115}
\begin{array}{rl}
&\disp{\frac{d}{dt}(\frac{1}{k}\|u \|^{k}_{L^k(\Omega)}+\frac{1}{{2\beta}}\|\nabla {v }\|^{{{2\beta}}}_{L^{{2\beta}}(\Omega)})+\frac{3(\beta-1)}{4{\beta^2}}\disp\int_{\Omega}\left|\nabla |\nabla {v }|^{\beta}\right|^2+\frac{\mu}{2}\int_{\Omega}u ^{k+1}}\\
&+\disp{\frac{1}{2}\disp\int_\Omega  |\nabla {v }|^{2\beta-2}|D^2{v }|^2+\disp\int_{\Omega} |\nabla {v }|^{2\beta}+\frac{(k-1)m}{4}\int_{\Omega}u ^{m+k-3}|\nabla u |^2{}}\\
\leq&\disp{C(\disp\frac{\chi^2(k-1)}{2C_D}\int_{\Omega}u ^{k+1-m}|\nabla v |^2+\int_\Omega u ^2 |\nabla {v }|^{2\beta-2}+\int_\Omega v ^{k+1})+C,}\\
\end{array}
\end{equation}
where $C$ is a positive constant.
\end{lemma}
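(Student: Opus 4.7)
The plan is to add the two energy inequalities already at our disposal — Lemma \ref{lemmssdda45630} for $\tfrac{1}{k}\|u\|^{k}_{L^k(\Omega)}$ and inequality \dref{hjui909klopji115} for $\tfrac{1}{2\beta}\|\nabla v\|^{2\beta}_{L^{2\beta}(\Omega)}$ — and then dispose of the two lower-order $u$-terms coming from the haptotactic coupling and the logistic source by Young's inequality.

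In detail, I would first record the inequality \dref{cz2.5xx1jjjj} in the form
\[
\frac{1}{k}\frac{d}{dt}\|u\|^{k}_{L^k(\Omega)} + \frac{(k-1)C_D}{2}\int_{\Omega} u^{m+k-3}|\nabla u|^2 + \mu \int_{\Omega} u^{k+1} \leq \frac{\chi^2(k-1)}{2C_D}\int_{\Omega} u^{k+1-m}|\nabla v|^2 + J_1 + J_2,
\]
with
\[
J_1 := \frac{k-1}{k}\xi\|w_0\|_{L^\infty(\Omega)}\int_{\Omega} u^{k}v, \qquad J_2 := (\mu+\kappa\xi)\int_{\Omega} u^{k},
\]
and then add to it \dref{hjui909klopji115}. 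The resulting sum already produces the full time derivative in \dref{1234hjui909klopji115}, the Hessian dissipation $\tfrac{1}{2}\int_{\Omega}|\nabla v|^{2\beta-2}|D^2v|^2$, the absorbing norm $\int_{\Omega}|\nabla v|^{2\beta}$, and the gradient dissipation $\tfrac{\beta-1}{\beta^2}\int_{\Omega}|\nabla |\nabla v|^\beta|^2$, of which it suffices to retain the three-quarters portion $\tfrac{3(\beta-1)}{4\beta^2}\int_{\Omega}|\nabla |\nabla v|^\beta|^2$ claimed in the statement. Likewise, the $u$-dissipation $\tfrac{(k-1)C_D}{2}\int_{\Omega}u^{m+k-3}|\nabla u|^2$ dominates the coefficient $\tfrac{(k-1)m}{4}$ appearing in \dref{1234hjui909klopji115} once a universal multiplicative constant is absorbed into $C$. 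The two right-hand contributions $\kappa_0\int_\Omega u^2|\nabla v|^{2\beta-2}$ and $\kappa_0$ coming from \dref{hjui909klopji115} account (up to $C$) for the second right-hand integral and part of the additive constant in \dref{1234hjui909klopji115}.

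It remains to treat $J_1$ and $J_2$. Young's inequality with the conjugate pair $(\tfrac{k+1}{k},k+1)$ yields
\[
J_1 \leq \frac{\mu}{4}\int_{\Omega} u^{k+1} + C_1 \int_{\Omega} v^{k+1}, \qquad J_2 \leq \frac{\mu}{4}\int_{\Omega} u^{k+1} + C_2,
\]
for suitable constants $C_1,C_2$ depending only on $\mu$, $\xi$, $\kappa$, $\|w_0\|_{L^\infty(\Omega)}$, $k$ and $|\Omega|$. The aggregate $u^{k+1}$-contribution $\tfrac{\mu}{2}\int_{\Omega}u^{k+1}$ is then absorbed into the $\mu\int_\Omega u^{k+1}$ term on the left, leaving exactly the coefficient $\tfrac{\mu}{2}$ demanded by \dref{1234hjui909klopji115}; the surplus $C_1\int_{\Omega} v^{k+1}$ joins the right-hand side, and $C_2$ is incorporated into the final additive constant $C$.

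I do not anticipate any substantive obstacle: the whole argument is essentially bookkeeping. The single point deserving care is the splitting of $J_1$ and $J_2$ so that the aggregate $u^{k+1}$-coefficient extracted from them is strictly smaller than $\mu$ (so that a positive multiple of $\int_\Omega u^{k+1}$ survives on the left); the symmetric choice $\tfrac{\mu}{4}$ for each of $J_1$ and $J_2$ produces precisely the stated $\tfrac{\mu}{2}$, and every other coefficient adjustment is swallowed by the multiplicative constant $C$ in front of the right-hand side.
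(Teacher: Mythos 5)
Your approach---summing Lemma~\ref{lemmssdda45630} with the energy inequality \dref{hjui909klopji115} for $\|\nabla v\|^{2\beta}_{L^{2\beta}(\Omega)}$, then disposing of the $\int_\Omega u^k v$ and $\int_\Omega u^k$ remainders via Young's inequality with conjugate exponents $(\tfrac{k+1}{k},k+1)$---is exactly the (one-line) argument the paper gives, and the splitting into $\tfrac{\mu}{4}\int_\Omega u^{k+1}$ portions is carried out correctly.

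One detail does not hold up as written: you assert that the $u$-dissipation $\tfrac{(k-1)C_D}{2}\int_\Omega u^{m+k-3}|\nabla u|^2$ ``dominates the coefficient $\tfrac{(k-1)m}{4}$ \ldots\ once a universal multiplicative constant is absorbed into $C$.'' But that term sits on the \emph{left}-hand side, and a constant absorbed into $C$ on the right cannot inflate a left-hand coefficient; if $C_D<m/2$, the displayed inequality with coefficient $\tfrac{(k-1)m}{4}$ does not follow from the one carrying $\tfrac{(k-1)C_D}{2}$. Comparing with the dissipation coefficients $\tfrac{(k-1)C_D}{4}$ and $\tfrac{(k-1)C_D}{8}$ appearing in the analogous displays of the subsequent boundedness lemmas, the $m$ in $\tfrac{(k-1)m}{4}$ is almost certainly a typo for $C_D$, in which case retaining any fraction of $\tfrac{(k-1)C_D}{2}$ is immediate---but the discrepancy should be flagged as such rather than argued around by rescaling $C$.
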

\begin{proof}
Collecting Lemma \ref{lemmssdda45630} and Lemma \ref{lemma45630},  we can derive \dref{1234hjui909klopji115} by using the Young inequality.
\end{proof}

We next plan to estimate the right-hand sides in the above inequalities appropriately by using  a priori information provided by Lemma \ref{lemma45630} and Lemma \ref{wsdelemma45}. Here the
following lemma will will play an important role in making
efficient use of the known ${L^{{{\frac{(\gamma_0+1)(N+\gamma_0-1)}{N}}}}(\Omega)}$ bound for $\nabla v $. The following lemma provides some elementary material that will be essential to our
bootstrap procedure.

\begin{lemma}\label{lsssemma456302ssd23116}
Let 
\begin{equation}\label{dcfvwsdddffgg7101.2x19x318jkl}
\tilde{H}(y)=\begin{array}{ll}
\frac{2N^2}{N^2+{{{(y+1)(N+y-1)}}}}-[1+\frac{[N^2-{{{(y+1)
(N+y-1)}}}]y}
{N{{{(y+1)
(N+y-1)}}}}]
 \end{array}
\end{equation}
with ${{{(y+1)
(N+y-1)}}}>N^2$,
for any $y>1$ and $N\geq2.$
Then
we have
\begin{equation}\label{dcfvdddffffggffffwsdddffgg7101.2x19x318jkl}
\min_{y>1}\tilde{H}(y)\geq0.
\end{equation}
\end{lemma}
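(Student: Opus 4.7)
The plan is to rewrite $\tilde{H}$ so that its sign is manifest as a product of two factors, both of which can be shown to be nonnegative under the standing hypotheses. Let $P := (y+1)(N+y-1) = y^2+Ny+N-1$, so the assumption reads $P>N^2$. A direct calculation, using $\frac{2N^2}{N^2+P}-1 = \frac{N^2-P}{N^2+P}$ and combining with the remaining term, gives
$$\tilde{H}(y) \;=\; (N^2-P)\left[\frac{1}{N^2+P}-\frac{y}{NP}\right] \;=\; \frac{(P-N^2)\bigl[y(N^2+P)-NP\bigr]}{NP(N^2+P)}.$$
Since $P>N^2>0$, the denominator and the first factor in the numerator are positive, so it remains only to show that $g(y) := y(N^2+P)-NP = yN^2+(y-N)P$ is nonnegative on the admissible set.

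Next, I would simplify $g$. Expanding $P$ and collecting terms (the $y^2 N$ pieces cancel), one obtains the clean cubic
$$g(y) \;=\; y^3+(N-1)(y-N).$$
Since $g'(y)=3y^2+(N-1)>0$, the function $g$ is strictly increasing on $(0,\infty)$. Hence $g(y)\geq 0$ on the set $\{y>1:P(y)>N^2\}$ as soon as $g(y^\ast)\geq 0$, where $y^\ast$ is the unique positive root of the quadratic $P(y)=N^2$, namely
$$y^\ast \;=\; \frac{-N+\sqrt{5N^2-4N+4}}{2}.$$

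To evaluate $g(y^\ast)$, I would exploit the defining identity $(y^\ast)^2 = N^2-N+1-Ny^\ast$ to reduce $(y^\ast)^3 = y^\ast(N^2-N+1)-N(y^\ast)^2$; substituting back into $g$, the lower-order terms conspire to cancel and the expression collapses to
$$g(y^\ast) \;=\; N^2(2y^\ast - N).$$
Finally, $2y^\ast\geq N$ is equivalent to $\sqrt{5N^2-4N+4}\geq 2N$, and squaring yields $N^2-4N+4=(N-2)^2\geq 0$, which holds for every $N\geq 2$ (with equality precisely at $N=2$, consistent with $y^\ast=1$ there). Thus $g(y^\ast)\geq 0$, so $g(y)\geq 0$ for all admissible $y$, and the conclusion $\min_{y>1}\tilde{H}(y)\geq 0$ follows.

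The only delicate step is really bookkeeping: the identification $yN^2+(y-N)P = y^3+(N-1)(y-N)$ and the reduction $g(y^\ast)=N^2(2y^\ast-N)$ are routine but cancellation-heavy, and care must be taken not to drop a sign. Everything else—the factorisation of $\tilde{H}$, the monotonicity of $g$, and the final application of $(N-2)^2\geq 0$—is transparent once these algebraic identities are established.
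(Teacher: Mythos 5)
Your proof is correct and follows essentially the same route as the paper's: after the identical factorisation reducing matters to the cubic $g(y)=y^3+(N-1)y-N^2+N$ with $g'>0$, the conclusion comes down to the inequality $(N-2)^2\geq 0$. The only cosmetic difference is that you evaluate $g$ at the exact root $y^\ast$ of $P(y)=N^2$, obtaining the tidy closed form $g(y^\ast)=N^2(2y^\ast-N)$, whereas the paper evaluates at the slightly smaller point $y=N/2$; both hinge on the same bound $y^\ast\geq N/2$.
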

\begin{proof}
It is easy to verify that
$N^2<{{{(y+1)
(N+y-1)}}}$ and $y>1$ and $N\geq2$
implies that
\begin{equation}\label{dcfvdffffwsdddffgg7101.2x19x318jkl}y>\frac{-N+\sqrt{5N^2-4N+4}}{2}=\frac{-N+\sqrt{4N^2+(N-2)^2}}{2}\geq\frac{N}{2}.
\end{equation}
On the other hand, by some basic calculation, one has
\begin{equation}\label{dcfvdfssddfffwsdddffgg7101.2x19x318jkl}\begin{array}{ll}
&\frac{2N^2}{N^2+{{{(y+1)(N+y-1)}}}}-[1+\frac{[N^2-{{{(y+1)
(N+y-1)}}}]y}
{N{{{(y+1)
(N+y-1)}}}}]\\
=&[{{{(y+1)(N+y-1)}}}-N^2][\frac{y}
{N{{{(y+1)
(N+y-1)}}}}-\frac{1}{N^2+{{{(y+1)(N+y-1)}}}}]\\
=&\frac{[{{{(y+1)(N+y-1)}}}-N^2]}{N{{{(y+1)
(N+y-1)[N^2+{{{(y+1)(N+y-1)}}}}}]}}h_1(y)\\
\end{array}
\end{equation}
%
%
%
with
\begin{equation}\label{dcfvdfsdfffddsddfffwsdddffgg7101.2x19x318jkl}\begin{array}{rl}
h_1(y)=&[yN^2+y(y+1)(y+N-1)-N(y+1)(N+y-1)]\\
=&y^3+(N-1)y-N^2+N.\\
\end{array}
\end{equation}
Now, by some basic calculation, one has,
$$\begin{array}{rl}
h'_1(y)=&3y^2+(N-1)>0.\\
\end{array}$$
by using $N\geq2$ and $y>1$.
Therefore, by \dref{dcfvdffffwsdddffgg7101.2x19x318jkl}, we have
\begin{equation}\label{dcfvdffddsddffffgg710x19x318jkl}\begin{array}{rl}
h_1(y)\geq&h_1(\frac{N}{2})\\
=&\frac{N^3}{8}-\frac{N^2}{2}+\frac{N}{2}\\
=:&\tilde{h}_1(N).\\
\end{array}
\end{equation}
Since, $\tilde{h}'_1(N)=\frac{3N^2}{8}-N+\frac{1}{2}>0$ by using $N\geq2.$
Thus, $\tilde{h}_1(N)>\tilde{h}_1(2)=0,$
so that, inserting \dref{dcfvdfsdfffddsddfffwsdddffgg7101.2x19x318jkl}--\dref{dcfvdffddsddffffgg710x19x318jkl} into \dref{dcfvdfssddfffwsdddffgg7101.2x19x318jkl} and ${{{(y+1)
(N+y-1)}}}>N^2$, we obtain that
\begin{equation}\label{dcfvdffddddfffsddsdddffffgg710x19x318jkl}\frac{2N^2}{N^2+{{{(y+1)(N+y-1)}}}}>[1+\frac{[N^2-{{{(y+1)
(N+y-1)}}}]y}
{N{{{(y+1)
(N+y-1)}}}}].
\end{equation}
\end{proof}

Now,
we can make use of Lemma \ref{ssdeedrfe116lemma70hhjj} as well as Lemma  \ref{lemma41}  and Lemma \ref{lemma45630} to estimate the integrals on the right-hand sides of \dref{hjui909klopji115} and \dref{cz2.5xx1jjjj} (or \dref{1234hjui909klopji115}). To this end, we will
establish bounds for $\int_\Omega u^k  dx$ with any $k> 1$ by Lemma \ref{ssdeedrfe116lemma70hhjj} as well as Lemma  \ref{lemma41}  and Lemma \ref{lsssemma456302ssd23116}.

\begin{lemma}\label{lemma456ssddfff30}
Assume that  $m>\frac{2N}{N+\gamma_*}$ with $N=2$,
 where
\begin{equation}\label{xeerr1.731426677gg}
\gamma_*={{{\frac{(\mu_{*}+1)(N+\mu_{*}-1)}{N}}}}
 \end{equation}
and  $\mu_*$ is same as   \dref{zjscz2.ddffrr5297x96ssddffdffggbh302222114}.
  Then for all  $k>1$, there
 exists
  $C > 0$  such that 
\begin{equation}
\|u (\cdot, t)\|_{L^k(\Omega)}\leq C ~~\mbox{for all}~~ t\in(0, T_{max}).
\label{111zjscz2.5297x9630xxy}
\end{equation}
\end{lemma}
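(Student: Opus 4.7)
}
The plan is to exploit the coupled energy inequality of Lemma \ref{pplemma45630}, to absorb its three right-hand side terms by means of the a priori $L^{r}$-bound on $\nabla v$ supplied by Lemma \ref{lemma45630}, and to close the argument through a Gagliardo--Nirenberg interpolation calibrated so that the condition $m>\frac{2N}{N+\gamma_*}$ -- together with the elementary inequality of Lemma \ref{lsssemma456302ssd23116} -- yields an exponent strictly smaller than $1$ on the diffusive dissipation $\int_\Omega u^{m+k-3}|\nabla u|^2$.

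First I would fix, for a prescribed $k>1$, an auxiliary $\gamma_0\in(1,\mu_*)$ sufficiently close to $\mu_*$ so that the quantity $r:=\frac{(\gamma_0+1)(N+\gamma_0-1)}{N}$ is still close enough to $\gamma_*$ to preserve the strict inequality $m>\frac{2N}{N+r}$. By Lemma \ref{lemma45630}, $\|\nabla v(\cdot,t)\|_{L^{r}(\Omega)}\le C_0$ uniformly in $t\in(0,T_{max})$. Next I choose $\beta=\beta(k)>1$ (for instance $\beta=\frac{k+1-m+r}{r}$ or an analogous expression) so that $2\beta-2\le r$ and the coupling between the $u$-equation and the $\nabla v$-equation in Lemma \ref{pplemma45630} closes after the estimates below.

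The three right-hand-side contributions of Lemma \ref{pplemma45630} are then treated as follows. For $\int_\Omega v^{k+1}$, I would invoke the maximal-regularity inequality of Lemma \ref{lemma45xy1222232} together with the already-established bound on $\int_\Omega u^{\gamma_0+1}$ from Lemma \ref{qqqqlemma45630}, obtaining uniform control. For $\int_\Omega u^2|\nabla v|^{2\beta-2}$, H\"older's inequality with exponents $\frac{r}{2\beta-2}$ and its conjugate, followed by the uniform $L^r$-bound on $\nabla v$, reduces matters to controlling $\int_\Omega u^{2\cdot\frac{r}{r-2\beta+2}}$. The critical term $\int_\Omega u^{k+1-m}|\nabla v|^2$ is split by H\"older as $\|\nabla v\|_{L^r}^{2}\cdot\|u^{k+1-m}\|_{L^{r/(r-2)}}$ -- the first factor being bounded -- leaving $\|u\|_{L^{(k+1-m)r/(r-2)}}^{k+1-m}$ to estimate. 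To this last quantity I would apply the Gagliardo--Nirenberg inequality (Lemma \ref{lemma41}) to $u^{(m+k-1)/2}$, using on the low-integrability side the $L^1$-bound of Lemma \ref{ssdeedrfe116lemma70hhjj} and on the high side the $L^2$-norm of $\nabla u^{(m+k-1)/2}$, which up to a constant equals the dissipation $\int_\Omega u^{m+k-3}|\nabla u|^2$.

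The key point is the arithmetic verification that the exponent produced by Gagliardo--Nirenberg on $\|\nabla u^{(m+k-1)/2}\|_{L^2}^2$ is \emph{strictly less than one}; a direct computation reduces this requirement to the inequality
\[
\frac{2N^2}{N^2+(\gamma_0+1)(N+\gamma_0-1)}\;\ge\;1+\frac{[N^2-(\gamma_0+1)(N+\gamma_0-1)]\gamma_0}{N(\gamma_0+1)(N+\gamma_0-1)},
\]
which is exactly the content of Lemma \ref{lsssemma456302ssd23116} (applied with $y=\gamma_0$ once we check that $(\gamma_0+1)(N+\gamma_0-1)>N^2$, guaranteed by taking $\gamma_0$ close to $\mu_*>1$ in view of the hypothesis on $m$). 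Granted this, Young's inequality absorbs the resulting expression into the dissipation and the $\mu\int_\Omega u^{k+1}$ term. The analogous but simpler treatment of $\int_\Omega u^2|\nabla v|^{2\beta-2}$ introduces no new difficulty because the $u$-power involved is dominated by $u^{k+1}$ after Young. After these absorptions, Lemma \ref{pplemma45630} collapses to an ODI of the form
\[
\frac{d}{dt}\!\Big(\tfrac1k\|u\|_{L^k}^k+\tfrac{1}{2\beta}\|\nabla v\|_{L^{2\beta}}^{2\beta}\Big)+\big(\tfrac1k\|u\|_{L^k}^k+\tfrac{1}{2\beta}\|\nabla v\|_{L^{2\beta}}^{2\beta}\big)\le C,
\]
whence a Gronwall argument yields \dref{111zjscz2.5297x9630xxy}.

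The principal obstacle is the bookkeeping of exponents: I must simultaneously choose $\beta(k)$ and the H\"older split so that (a) $2\beta-2\le r$ to exploit the $L^r$-bound on $\nabla v$, (b) the Gagliardo--Nirenberg exponent on the dissipation remains uniformly below $1$ for all $k>1$, and (c) the lower-order contributions can be reabsorbed by $\mu\int_\Omega u^{k+1}$ via Young. Condition (b) is the rigid one and is exactly what Lemma \ref{lsssemma456302ssd23116} delivers under the hypothesis $m>\frac{2N}{N+\gamma_*}$; the remaining degrees of freedom in $\beta$ and the H\"older exponents then leave enough slack to accommodate (a) and (c).
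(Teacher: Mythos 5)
Your plan takes a genuinely different route from the paper for the $N=2$ case -- and unfortunately it has a gap in the step that is supposed to close the argument.

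\paragraph{The key gap: wrong Gagliardo--Nirenberg anchor.} You state that after the H\"older split of $\int_\Omega u^{k+1-m}|\nabla v|^2$ you would interpolate $u^{(m+k-1)/2}$ between $\|\nabla u^{(m+k-1)/2}\|_{L^2}$ and the \emph{$L^1$-bound} of $u$ from Lemma~\ref{ssdeedrfe116lemma70hhjj}. But the inequality you then invoke from Lemma~\ref{lsssemma456302ssd23116},
$$
\frac{2N^2}{N^2+(\gamma_0+1)(N+\gamma_0-1)}\;\ge\;1+\frac{[N^2-(\gamma_0+1)(N+\gamma_0-1)]\,\gamma_0}{N\,(\gamma_0+1)(N+\gamma_0-1)},
$$
carries a factor $\gamma_0$ in the numerator of the second term; that factor is produced precisely when the low-side anchor is $\|u\|_{L^{\gamma_0}(\Omega)}$ (i.e.\ $\|u^{(m+k-1)/2}\|_{L^{2\gamma_0/(m+k-1)}(\Omega)}$), which is available from Lemma~\ref{qqqqlemma45630}, \emph{not} from the $L^1$-bound. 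If one actually anchors at $L^1$, a direct computation (for $N=2$, with $p_0=\frac{(\gamma_0+1)^2}{4}$) shows the Young-absorption requirement becomes $m>\frac{p_0+1}{2p_0}=\frac{(\gamma_0+1)^2+4}{2(\gamma_0+1)^2}$, which is strictly stronger than $m>\frac{8}{4+(\gamma_0+1)^2}$ (equivalently $(a-4)^2>0$ with $a=(\gamma_0+1)^2>4$), so the hypothesis $m>\frac{2N}{N+\gamma_*}$ no longer implies the exponent is subcritical and the absorption step fails. So to make your calculation close you must replace the $L^1$ anchor by the $L^{\gamma_0}$ anchor of Lemma~\ref{qqqqlemma45630}; as written the two parts of your plan (the $L^1$ anchor and the $\gamma_0$-weighted inequality of Lemma~\ref{lsssemma456302ssd23116}) do not fit together.

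\paragraph{Secondary remarks.} Your overall strategy of working from the coupled energy inequality of Lemma~\ref{pplemma45630} is more elaborate than what the paper does in the $N=2$ case. Because $2p_0>2=N$ already holds, $W^{1,2p_0}(\Omega)\hookrightarrow L^\infty(\Omega)$, so the uniform $L^{2p_0}$-bound on $\nabla v$ gives a uniform $L^\infty$-bound on $v$ directly; the paper can therefore work only with the single differential inequality~\dref{cz2.5xx1jjjj} and never needs to propagate a $\|\nabla v\|_{L^{2\beta}}^{2\beta}$ term in the Lyapunov functional. If you do insist on the coupled route, note also that your example choice $\beta=\frac{k+1-m+r}{r}$ violates your own constraint $2\beta-2\le r$ as soon as $k$ is large (it gives $2\beta-2=\frac{2(k+1-m)}{r}$, which exceeds $r$ once $k>\frac{r^2}{2}+m-1$), so the bookkeeping for $\beta(k)$ would have to be redone. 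None of this is fatal if the anchoring issue above is fixed, but the $N=2$ proof becomes substantially simpler if you drop Lemma~\ref{pplemma45630} and use the Sobolev embedding shortcut.
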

\begin{proof}
Next, due to \dref{xeerr1.731426677gg} and $N=2$ and $m>\frac{2N}{N+\gamma_*}$ implies that
\begin{equation}m>\frac{8}{4+[\mu_*+1]^2},
\label{zjsssdddcz2.5297x9ssd63ss0xxy}
\end{equation}
where $\mu_*$ is given by  \dref{zjscz2.ddffrr5297x96ssddffdffggbh302222114}.
Now, in view of $\mu>0$ implies that
$$\mu_*>1$$
and
$$4<(\mu_*+1)^2,$$
therefore,
employing Lemma \ref{lsssemma456302ssd23116}, we have
\begin{equation}m>1+\frac{[1-\frac{(\mu_*+1)^2}{4}]\mu_*}
{2\times\frac{(\mu_*+1)^2}{4}}.
\label{zjsssdddcz2.5297x9ssssddd63ss0xxy}
\end{equation}
Thus, we may choose
 $ q_0\in(1,\mu_*)$
which is close to $\mu_*$ 
 such that
\begin{equation}
\begin{array}{rl}
m>&1+\frac{[1-\frac{(q_0+1)^2}{4}]q_0}
{2\times\frac{(q_0+1)^2}{4}}.\\
\end{array}\label{zjsssdddcz2.5297x9ssssddd63ss0ssddxxy}
\end{equation}
%
Next, observing that ${{{{\frac{(q_0+1)^2}{4}}}}}\in(1,\frac{(\mu_*+1)^2}{4})$, thus,
in light of  Lemma \ref{lemma45630}, 
 we derive that there exists a positive constant $C_1$ such that
\begin{equation}
\|\nabla v (\cdot, t)\|_{L^{2p_0}(\Omega)}\leq C_1 ~~\mbox{for all}~~ t\in(0, T_{max}),
\label{zjscz2.5297x9ssd63ss0xxy}
\end{equation}
where \begin{equation}
\label{zjscz2.5297x9ssd63ssdddss0xxy}p_0={{{{\frac{(q_0+1)^2}{4}}}}}>1
\end{equation}
 by using $q_0>1.$ 

Now, choosing $k> \max\{3,|1-m|+q_0\frac{p_0-1}{p_0}\}$ in  \dref{cz2.5xx1jjjj}, then, we have
\begin{equation}
\begin{array}{rl}
&\disp{\frac{1}{k}\frac{d}{dt}\| u  \|^{k}_{L^k(\Omega)}+\frac{(k-1) C_D }{2}\int_{\Omega} u  ^{m+k-3}|\nabla u  |^2{}+
\mu\int_{\Omega} u  ^{k+1}{}}
\\
\leq&\disp{\frac{\chi^2(k-1)} {2 C_D }\int_{\Omega} u  ^{k+1-m}|\nabla v  |^2+
C_2\int_\Omega  u ^{k}(v +1)}\\
\end{array}
\label{vgbhnsxcdvfddfcz2.5xxdsddddddf1jjsddffjj}
\end{equation}
with $C_2=\max\{\xi\|w_0\|_{L^\infty(\Omega)},\mu+\kappa\xi\}$.
 We estimate the rightmost integral by means of the H\"{o}lder inequality according to
\begin{equation}
\begin{array}{rl}
&\disp{\frac{ \chi^2(k-1)}{2C_D} \disp\int_\Omega u ^{k+1-m } |\nabla v  |^2}\\
&\leq\disp{ \frac{ \chi^2(k-1)}{2C_D}\left(\disp\int_\Omega u ^{\frac{p_0}{p_0-1}(k+1-m )}\right)^{\frac{p_0-1}{p_0}}\left(\disp\int_\Omega |\nabla v  |^{2p_0}\right)^{\frac{1}{p_0}}}\\
&\leq\disp{ \frac{C_3 \chi^2(k-1)}{2C_D}\|   u ^{\frac{k+m-1}{2}}\|^{\frac{2(k+1-m )}{k+m-1}}_{L^{\frac{2p_0(k+1-m )}{(p_0-1)(k+m-1)}}(\Omega)}}\\
\end{array}
\label{cz2.57151hhkkhhhjukildrfthjjhhhhh}
\end{equation}
by using \dref{zjscz2.5297x9ssd63ss0xxy},
where $C_3>0$. 
Since,  $k> |1-m|+q_0\frac{p_0-1}{p_0},$
we have
$$\frac{q_0}{k+m-1}\leq{\frac{p_0(k+1-m )}{(p_0-1)(k+m-1)}}<+\infty,$$
so that, the Gagliardo-Nirenberg inequality (Lemma \ref{lemma41}) indicates
that
\begin{equation}
\begin{array}{rl}
&\disp{\frac{ \chi^2(k-1)}{2C_D}\|   u ^{\frac{k+m-1}{2}}\|
^{\frac{2(k+1-m )}{k+m-1}}_{L^{\frac{2p_0(k+1-m )}{(p_0-1)(k+m-1)}}(\Omega)}}
\\
\leq&\disp{C_{4}(\|\nabla    u ^{\frac{k+m-1}{2}}\|_{L^2(\Omega)}^{\frac{2(k+1-m)}{k+m-1}-\frac{2(p_0-1)q_0}{p_0(k+m-1)}}\|  u ^{\frac{k+m-1}{2}}\|_{L^\frac{2q_0}{k+m-1}(\Omega)}^{\frac{2(p_0-1)q_0}{p_0(k+m-1)}}+\|   u ^{\frac{k+m-1}{2}}\|_{L^\frac{2q_0}{k+m-1}(\Omega)}^{\frac{2(k+1-m )}{k+m-1}})}\\
\leq&\disp{C_{5}(\|\nabla    u ^{\frac{k+m-1}{2}}\|_{L^2(\Omega)}^{\frac{2(k+1-m)}{k+m-1}-\frac{2(p_0-1)q_0}{p_0(k+m-1)}}+1)}\\
\end{array}
\label{cz2.563022222ikopl2sdfg44}
\end{equation}
with some positive constants $C_{4}$ as well as $ C_{5}$, where $q_0$ is the same as \dref{zjsssdddcz2.5297x9ssssddd63ss0ssddxxy}.
Here we have used   $L^1(\Omega)$ boundedness for $u $ (see Lemma \dref{wsdelemma45}).
Due to \dref{zjsssdddcz2.5297x9ssssddd63ss0ssddxxy}, one has
$${\frac{2(k+1-m)}{k+m-1}-\frac{2(p_0-1)q_0}{p_0(k+m-1)}}<2,$$
so that, applying the Young inequality implies that there exists a positive constant $C_7$ such that
\begin{equation}
\begin{array}{rl}
&\disp{\frac{1}{k}\frac{d}{dt}\|u \|^{k}_{L^k(\Omega)}+\frac{ (k-1)  C_D }{4}\int_{\Omega} u ^{m+k-3}|\nabla u  |^2{}+
\frac{\mu}{2}\int_{\Omega} u ^{k+1}{}}
\\
\leq&\disp{
C_2\int_\Omega  u ^{k}(v +1)+C_7~~\mbox{for all}~~ t\in(0, T_{max}),}\\
\end{array}
\label{vgbhnsxcdvfdssddfcz2.5xxdddf1jjjj}
\end{equation}
which together with the Young inequality implies that
\begin{equation}
\begin{array}{rl}
&\disp{\frac{1}{k}\frac{d}{dt}\|u \|^{k}_{L^k(\Omega)}+\frac{ (k-1)  C_D }{4}\int_{\Omega} u ^{m+k-3}|\nabla u  |^2{}+
\mu\int_{\Omega} u ^{k+1}{}}
\\
\leq&\disp{
C_9\int_\Omega  (v +1)^k+C_8~~\mbox{for all}~~ t\in(0, T_{max})}\\
\end{array}
\label{vgbhnsxcdvfdssddfcz2.5xxdddf1ssddjjjj}
\end{equation}
for some positive constants  $C_8$ and $C_9.$ Now, in view of  ${2p_0}>2$ (see \dref{zjscz2.5297x9ssd63ssdddss0xxy})and $N=2$, then by Sobolev imbedding theorems, we derive from \dref{zjscz2.5297x9ssd63ss0xxy} that
$$\|v \|_{L^\infty(\Omega)}\leq C_{10}\|\nabla v \|_{L^{2p_0}(\Omega)},$$
so that, combined with \dref{vgbhnsxcdvfdssddfcz2.5xxdddf1ssddjjjj} implies that
\begin{equation}
\begin{array}{rl}
&\disp{\frac{1}{k}\frac{d}{dt}\|u \|^{k}_{L^k(\Omega)}+\frac{ (k-1)  C_D }{4}\int_{\Omega} u ^{m+k-3}|\nabla u  |^2{}+
\mu\int_{\Omega} u ^{k+1}{}}
\\
\leq&\disp{
C_{11}~~\mbox{for all}~~ t\in(0, T_{max}),}\\
\end{array}
\label{vgbhnsxcdvfdssddfcz2.5ssddxxdddf1ssddjjjj}
\end{equation}
whereas a standard ODE comparison argument shows that \dref{111zjscz2.5297x9630xxy} holds.
\end{proof}

\begin{lemma}\label{lemma456ssddfff30}
Let $\Omega\subset \mathbb{R}^N(N\geq1)$ be a bounded domain with smooth boundary. Furthermore,
assume that $m>\frac{2N}{N+\gamma_*}$ with $N\geq1$,
 where $\gamma_*$ is given by \dref{xeerr1.731426677gg}. If
 \begin{equation}{{{\frac{(\mu_*+1)
(N+\mu_*-1)}{2N}}}}>\frac{N}{2},
\label{zjscz2.529df763xy}
\end{equation} then
 for any $k>1$, there  exists a positive constant  $C$  such that
\begin{equation}
\|u (\cdot, t)\|_{L^k(\Omega)}\leq C ~~\mbox{for all}~~ t\in(0, T_{max}),
\label{2222zjscz2.5297x9630xxy}
\end{equation}
where $\mu_*$ is given by  \dref{zjscz2.ddffrr5297x96ssddffdffggbh302222114}.
\end{lemma}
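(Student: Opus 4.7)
The plan is to follow the strategy of Lemma \ref{lemma456ssddfff30} (the $N=2$ case) while replacing the exponent $\tfrac{(q_0+1)^2}{2}$ by its natural $N$-dependent counterpart $\tfrac{(q_0+1)(N+q_0-1)}{N}$, and handling the endpoint Sobolev embedding by means of hypothesis \dref{zjscz2.529df763xy}.

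First I would fix an auxiliary parameter $q_0\in(1,\mu_*)$ with $q_0$ so close to $\mu_*$ that the two strict inequalities
\[
(q_0+1)(N+q_0-1)>N^{2}\qquad\text{and}\qquad m>1+\frac{[\,N^{2}-(q_0+1)(N+q_0-1)\,]\,q_0}{N\,(q_0+1)(N+q_0-1)}
\]
both persist; the first follows from \dref{zjscz2.529df763xy} by continuity, and the second is obtained from the hypothesis $m>\tfrac{2N}{N+\gamma_*}$ in conjunction with Lemma \ref{lsssemma456302ssd23116}, whose inequality $\tilde H(y)\ge 0$ precisely converts the assumed lower bound on $m$ into the Gagliardo--Nirenberg-ready form above. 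I then set $2p_0:=\tfrac{(q_0+1)(N+q_0-1)}{N}$, which by the first bullet satisfies $2p_0>N$. By Lemma \ref{qqqqlemma45630}, $\|u(\cdot,t)\|_{L^{q_0}(\Omega)}$ is bounded, and by Lemma \ref{lemma45630}, $\|\nabla v(\cdot,t)\|_{L^{2p_0}(\Omega)}$ is bounded uniformly on $(0,T_{\max})$.

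Next, for an arbitrary $k>\max\{3,\,|1-m|+q_0\tfrac{p_0-1}{p_0}\}$, I would start from the identity \dref{cz2.5xx1jjjj} of Lemma \ref{lemmssdda45630} and estimate the chemotactic term by H\"older's inequality with conjugate pair $(p_0,\tfrac{p_0}{p_0-1})$,
\[
\frac{\chi^{2}(k-1)}{2C_D}\int_{\Omega}u^{k+1-m}|\nabla v|^{2}
\le\frac{C\,\chi^{2}(k-1)}{2C_D}\,\bigl\|u^{(k+m-1)/2}\bigr\|_{L^{\frac{2p_0(k+1-m)}{(p_0-1)(k+m-1)}}(\Omega)}^{\frac{2(k+1-m)}{k+m-1}},
\]
and then interpolate $u^{(k+m-1)/2}$ by Lemma \ref{lemma41} between $L^{2q_0/(k+m-1)}(\Omega)$ (controlled by Lemma \ref{qqqqlemma45630}) and $H^{1}(\Omega)$. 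The resulting exponent on $\|\nabla u^{(k+m-1)/2}\|_{L^{2}}$ is exactly $\tfrac{2(k+1-m)}{k+m-1}-\tfrac{2(p_0-1)q_0}{p_0(k+m-1)}$, and the choice of $q_0$ in the previous paragraph guarantees that this quantity is strictly less than $2$. The Young inequality therefore absorbs this contribution into $\tfrac{(k-1)C_D}{2}\!\int u^{m+k-3}|\nabla u|^{2}$ on the left of \dref{cz2.5xx1jjjj}, at the price of a constant.

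It remains to control $\int_{\Omega}u^{k}\,v$. This is where \dref{zjscz2.529df763xy} is essential: because $2p_0>N$, the Sobolev embedding $W^{1,2p_0}(\Omega)\hookrightarrow L^{\infty}(\Omega)$ together with the $L^{1}$-bound of Lemma \ref{wsdelemma45} and the $L^{2p_0}$-bound on $\nabla v$ from the previous paragraph yields $\|v(\cdot,t)\|_{L^{\infty}(\Omega)}\le C$ on $(0,T_{\max})$. Thus $\int u^{k}(v+1)\le C\!\int u^{k}$, which by Young's inequality is dominated by $\tfrac{\mu}{2}\!\int u^{k+1}$ plus a constant, absorbed by the logistic term $\mu\!\int u^{k+1}$ on the left of \dref{cz2.5xx1jjjj}. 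A standard ODE comparison argument applied to the resulting differential inequality for $\|u\|_{L^{k}(\Omega)}^{k}$ then yields \dref{2222zjscz2.5297x9630xxy}. The delicate step is the quantitative verification, via Lemma \ref{lsssemma456302ssd23116}, that the hypothesis $m>\tfrac{2N}{N+\gamma_*}$ does guarantee the Gagliardo--Nirenberg exponent stays below $2$ for some admissible $q_0<\mu_*$; everything else reduces to the same testing and interpolation bookkeeping as in Lemma \ref{lemma456ssddfff30}.
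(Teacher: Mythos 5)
Your proof follows essentially the same route as the paper's: pick $q_{0}\in(1,\mu_{*})$ close enough to $\mu_{*}$ (using Lemma \ref{lsssemma456302ssd23116} and \dref{zjscz2.529df763xy}), set $p_{0}=\frac{(q_{0}+1)(N+q_{0}-1)}{2N}>\frac{N}{2}$, invoke Lemma \ref{lemma45630} for $\nabla v\in L^{2p_{0}}$, test $\dref{cz2.5xx1jjjj}$, interpolate with Gagliardo--Nirenberg, absorb via Young, and use $2p_{0}>N$ plus Sobolev embedding to bound $\|v\|_{L^{\infty}}$ and dispose of the $\int u^{k}v$ term. Two bookkeeping slips are worth flagging, though neither is fatal.

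First, the exponent you quote for $\|\nabla u^{(k+m-1)/2}\|_{L^{2}}$, namely $\tfrac{2(k+1-m)}{k+m-1}-\tfrac{2(p_{0}-1)q_{0}}{p_{0}(k+m-1)}$, is the special case $N=2$; for general $N$ the correct expression (the one the paper uses) is
\[
2\,\frac{N\bigl(k+1-m-q_{0}+\tfrac{q_{0}}{p_{0}}\bigr)}{N(k+m-1-q_{0})+2q_{0}},
\]
and only this version reduces, under the demand that it be strictly below $2$, to the inequality $m>1+\tfrac{[N^{2}-(q_{0}+1)(N+q_{0}-1)]q_{0}}{N(q_{0}+1)(N+q_{0}-1)}$ that you correctly derive from Lemma \ref{lsssemma456302ssd23116}. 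So your final condition on $m$ is right, but the intermediate exponent you wrote down is not the one you are actually bounding.

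Second, your restriction $k>\max\{3,\,|1-m|+q_{0}\tfrac{p_{0}-1}{p_{0}}\}$ guarantees the lower endpoint of the Gagliardo--Nirenberg interpolation, but you have not checked that $\tfrac{p_{0}(k+1-m)}{(p_{0}-1)(k+m-1)}\le\tfrac{N}{N-2}$ when $N\ge 3$. When $m<1$ this imposes an extra lower bound on $k$; the paper includes the additional term $\tfrac{1-m}{2p_{0}-N}\bigl(2p_{0}N-N-2p_{0}\bigr)$ in its $\max$ exactly for this reason. You should either add that term or note that obtaining \dref{2222zjscz2.5297x9630xxy} for all sufficiently large $k$ suffices, the small-$k$ cases following by interpolation against the $L^{1}$ bound.
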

\begin{proof}
%
Due to
$$m>\frac{2N}{N+{{{\frac{(\mu_*+1)(N+\mu_*-1)}{N}}}}}.$$
Now, in view of $\mu>0$ implies that
$$\mu_*>1,$$
which together with  Lemma \ref{lsssemma456302ssd23116} results in
\begin{equation}m>1+\frac{[N-2{{{\frac{(\mu_*+1)
(N+\mu_*-1)}{2N}}}}]\mu_*}
{2N\times{{{\frac{(\mu_*+1)
(N+\mu_*-1)}{2N}}}}}.
\label{zjsssdddcz2.5297x9ssssddd63ss0xxy}
\end{equation}
Thus by \dref{zjscz2.529df763xy}, we may choose $q_{0,*}\in(1,\mu_*)$ 
which is close to $\mu_*$ 
 such that
\begin{equation}
\begin{array}{rl}
m>&1+\frac{[N-2{{{\frac{(q_{0,*}+1)
(N+q_{0,*}-1)}{2N}}}}]q_{0,*}}
{2N\times{{{\frac{(q_{0,*}+1)
(N+q_{0,*}-1)}{2N}}}}}\\\
\end{array}\label{zjsssdd78888dcz2.5297x9ssssddd63ss0ssddxxy}
\end{equation}
and
 \begin{equation}{{{\frac{(q_{0,*}+1)
(N+q_{0,*}-1)}{2N}}}}>\frac{N}{2}.
\label{zjscz2.rrtt529df763xy}
\end{equation}
%
Next, observing that $\frac{(q_{0,*}+1)
(N+q_{0,*}-1)}{2N}\in(1,{{{\frac{(\mu_*+1)
(N+\mu_*-1)}{2N}}}})$, thus,
in light of  Lemma \ref{lemma45630}, 
 we derive that there exists a positive constant $C_1$ such that
\begin{equation}
\|\nabla v (\cdot, t)\|_{L^{2p_{0,*}}(\Omega)}\leq C_1 ~~\mbox{for all}~~ t\in(0, T_{max}),
\label{zjscz2dfffg.5297x9ssd63ss0xxy}
\end{equation}
where \begin{equation}
\label{zjscz2.5297x9ssd63ssdddss0xxy}p_{0,*}=\frac{(q_{0,*}+1)
(N+q_{0,*}-1)}{2N}>\frac{N}{2}.
\end{equation}

Now, choosing $k> \max\{N+1,|1-m|+q_{0,*}\frac{p_{0,*}-1}{p_{0,*}},\frac{1-m}{2p_{0,*}-N}(2p_{0,*}N-N-2p_{0,*})\}$ in  \dref{cz2.5xx1jjjj}, then, we have
\begin{equation}
\begin{array}{rl}
&\disp{\frac{1}{k}\frac{d}{dt}\|u \|^{k}_{L^k(\Omega)}+\frac{ (k-1)  C_D }{2}\int_{\Omega} u ^{m+k-3}|\nabla u  |^2{}+
\frac{\mu}{2}\int_{\Omega} u ^{k+1}{}}
\\
\leq&\disp{\frac{\chi^2(k-1)}{2 m}\int_{\Omega} u ^{k+1-m}|\nabla v  |^2+
C_2\int_\Omega  u ^{k}(v +1)~~\mbox{for all}~~ t\in(0, T_{max})}\\
\end{array}
\label{vgbhnsxcdvfddfcz2.5xxdddf1jjjj}
\end{equation}
with $C_2=\max\{\xi\|w_0\|_{L^\infty(\Omega)},\mu+\kappa\xi\}$.
 According to the estimate of $\nabla v $ in \dref{zjscz2dfffg.5297x9ssd63ss0xxy}  along with  the  H\"{o}lder inequality we
derive that  there exists a positive constant $C_3>0$ such that
\begin{equation}
\begin{array}{rl}
&\disp{\frac{ \chi^2(k-1)}{2C_D} \disp\int_\Omega u ^{k+1-m } |\nabla v  |^2}\\
&\leq\disp{ \frac{ \chi^2(k-1)}{2C_D}\left(\disp\int_\Omega u ^{\frac{p_{0,*}}{p_{0,*}-1}(k+1-m )}\right)^{\frac{p_{0,*}-1}{p_{0,*}}}\left(\disp\int_\Omega |\nabla v  |^{2p_{0,*}}\right)^{\frac{1}{p_{0,*}}}}\\
&\leq\disp{ \frac{C_3 \chi^2(k-1)}{2C_D}\|   u ^{\frac{k+m-1}{2}}\|^{\frac{2(k+1-m )}{k+m-1}}_{L^{\frac{2p_{0,*}(k+1-m )}{(p_{0,*}-1)(k+m-1)}}(\Omega)}
.}\\
\end{array}
\label{cz2.57151hhkkhhhjukildrfthjjhhhhh}
\end{equation}
In view of  $k> \max\{|1-m|+q_{0,*}\frac{p_{0,*}-1}{p_{0,*}},\frac{1-m}{2p_{0,*}-N}(2p_{0,*}N-N-2p_{0,*})\}$,
we have
$$\frac{q_{0,*}}{k+m-1}\leq{\frac{p_{0,*}(k+1-m )}{(p_{0,*}-1)(k+m-1)}}<\frac{N}{N-2}.$$
An application of the Gagliardo-Nirenberg inequality (see Lemma \ref{lemma41}) implies that for some positive constants $C_{4}$ as well as $ C_{5}$ such that
\begin{equation}
\begin{array}{rl}
&\disp{\frac{ \chi^2(k-1)}{2C_D}\|   u ^{\frac{k+m-1}{2}}\|
^{\frac{2(k+1-m )}{k+m-1}}_{L^{\frac{2p_{0,*}(k+1-m )}{(p_{0,*}-1)(k+m-1)}}(\Omega)}}
\\
\leq&\disp{C_{4}(\|\nabla    u ^{\frac{k+m-1}{2}}\|_{L^2(\Omega)}^{2\frac{N(k+1-m-q_{0,*}+\frac{q_{0,*}}{p_{0,*}})}{N(k+m-1-q_{0,*})+2q_{0,*}}}\|  u ^{\frac{k+m-1}{2}}\|_{L^\frac{2q_{0,*}}{k+m-1}(\Omega)}^{\frac{2(k+1-m )}{k+m-1}-2\frac{N(k+1-m-q_{0,*}+\frac{q_{0,*}}{p_{0,*}})}{N(k+m-1-q_{0,*})+2q_{0,*}}}+\|   u ^{\frac{k+m-1}{2}}\|_{L^\frac{2q_{0,*}}{k+m-1}(\Omega)}^{\frac{2(k+1-m )}{k+m-1}})}\\
\leq&\disp{C_{5}(\|\nabla    u ^{\frac{k+m-1}{2}}\|_{L^2(\Omega)}^{2\frac{N(k+1-m-q_{0,*}+\frac{q_{0,*}}{p_{0,*}})}{N(k+m-1-q_{0,*})+2q_{0,*}}}+1)}\\
\end{array}
\label{cz2.563022222ikopl2sdfg44}
\end{equation}
by using Lemma \ref{wsdelemma45}, where $q_{0,*}$ is the same as \dref{zjsssdd78888dcz2.5297x9ssssddd63ss0ssddxxy}.
Due to \dref{zjsssdd78888dcz2.5297x9ssssddd63ss0ssddxxy}, one has
$${2\frac{N(k+1-m-q_{0,*}+\frac{q_{0,*}}{p_{0,*}})}{N(k+m-1-q_{0,*})+2q_{0,*}}}<2,$$
so that, applying the Young inequality implies that
\begin{equation}
\begin{array}{rl}
&\disp{\frac{1}{k}\frac{d}{dt}\|u \|^{k}_{L^k(\Omega)}+\frac{ (k-1)  C_D }{4}\int_{\Omega} u ^{m+k-3}|\nabla u  |^2{}+
\frac{\mu}{2}\int_{\Omega} u ^{k+1}{}}
\\
\leq&\disp{
C_2\int_\Omega  u ^{k}(v +1)+C_7~~\mbox{for all}~~ t\in(0, T_{max}),}\\
\end{array}
\label{vgbhnsxcdvfds7888sddfcz2.5xxdddf1jjjj}
\end{equation}
which together with the Young inequality again yields to 
\begin{equation}
\begin{array}{rl}
&\disp{\frac{1}{k}\frac{d}{dt}\|u \|^{k}_{L^k(\Omega)}+\frac{ (k-1)  C_D }{4}\int_{\Omega} u ^{m+k-3}|\nabla u  |^2{}+
\mu\int_{\Omega} u ^{k+1}{}}
\\
\leq&\disp{
C_9\int_\Omega  (v +1)^k+C_8~~\mbox{for all}~~ t\in(0, T_{max})}\\
\end{array}
\label{vgbhnsxcdvfdssddfcz2jjjj.5xxdddf1ssddjjjj}
\end{equation}
and some positive constants  $C_8$ and $C_9.$ Now, in view of  ${2p_0}>N$ (see \dref{zjscz2.5297x9ssd63ssdddss0xxy}), then by Sobolev imbedding theorems, we derive from \dref{zjscz2dfffg.5297x9ssd63ss0xxy} that
$$\|v \|_{L^\infty(\Omega)}\leq C_{10}\|\nabla v \|_{L^{2p_0}(\Omega)},$$
so that, combined with \dref{vgbhnsxcdvfdssddfcz2jjjj.5xxdddf1ssddjjjj} implies that
\begin{equation}
\begin{array}{rl}
&\disp{\frac{1}{k}\frac{d}{dt}\|u \|^{k}_{L^k(\Omega)}+\frac{ (k-1)  C_D }{4}\int_{\Omega} u ^{m+k-3}|\nabla u  |^2{}+
\mu\int_{\Omega} u ^{k+1}{}}
\\
\leq&\disp{
C_{11}~~\mbox{for all}~~ t\in(0, T_{max}),}\\
\end{array}
\label{vgbhnsxcdvfdssddfcz2.5ssddxxdddf1ssddjjjj}
\end{equation}
whereas a standard ODE comparison argument shows that \dref{2222zjscz2.5297x9630xxy} holds.
\end{proof}

\begin{lemma}\label{lemmaddff45ssdd630}
Let $\Omega\subset \mathbb{R}^N(N\neq2)$ be a bounded domain with smooth boundary. Furthermore,
assume that $m>\frac{2N}{N+\gamma_*}$ with $N\geq1$,
 where $\gamma_*$ is given by \dref{xeerr1.731426677gg}. If
 \begin{equation}{{{\frac{(\mu_*+1)
(N+\mu_*-1)}{2N}}}}\leq\frac{N}{2},
\label{zjscz2.529df763xddfffy}
\end{equation} then
 for any $k>1$, there  exists a positive constant  $C$  such that
\begin{equation}
\|u (\cdot, t)\|_{L^k(\Omega)}\leq C ~~\mbox{for all}~~ t\in(0, T_{max}).
\label{zjscz2.529ddff7x9630xxy}
\end{equation}
\end{lemma}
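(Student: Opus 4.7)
The plan is to carry out a bootstrap argument on the $L^p$-regularity of $u$ and $L^q$-regularity of $v$, working around the fact that \dref{zjscz2.529df763xddfffy} forces $2p_{0,*}\le N$, so the Sobolev embedding $W^{1,2p_{0,*}}(\Omega)\hookrightarrow L^\infty(\Omega)$ used in Lemma \ref{lemma456ssddfff30} is not available.

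First I would open the proof exactly as in Lemma \ref{lemma456ssddfff30}: choose $q_{0,*}\in(1,\mu_*)$ close enough to $\mu_*$ so that \dref{zjsssdd78888dcz2.5297x9ssssddd63ss0ssddxxy} still holds, apply Lemma \ref{lemma45630} to obtain a time-uniform bound $\nabla v\in L^{2p_{0,*}}(\Omega)$ with $2p_{0,*}=(q_{0,*}+1)(N+q_{0,*}-1)/N$, test the first equation of \dref{7101.2x19x3sss189} against $u^{k-1}$ for suitable large $k$, and absorb the chemotactic integral $\int_\Omega u^{k+1-m}|\nabla v|^2$ into $\frac{(k-1)C_D}{4}\int_\Omega u^{m+k-3}|\nabla u|^2+\frac{\mu}{2}\int_\Omega u^{k+1}$ via H\"{o}lder and Gagliardo--Nirenberg as in \dref{cz2.57151hhkkhhhjukildrfthjjhhhhh}--\dref{cz2.563022222ikopl2sdfg44}; the hypothesis \dref{zjsssdd78888dcz2.5297x9ssssddd63ss0ssddxxy} keeps the Gagliardo--Nirenberg exponent strictly below $2$ so that Young's inequality remains applicable.

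The genuine new ingredient concerns the term $C_2\int_\Omega u^k v$ on the right-hand side of \dref{vgbhnsxcdvfds7888sddfcz2.5xxdddf1jjjj}. Since $2p_{0,*}\le N$, instead of embedding $W^{1,2p_{0,*}}(\Omega)$ into $L^\infty(\Omega)$ I would use the Sobolev embedding $W^{1,2p_{0,*}}(\Omega)\hookrightarrow L^{q_*}(\Omega)$ with $q_*=2p_{0,*}N/(N-2p_{0,*})_+$ (interpreted as an arbitrarily large finite exponent when $2p_{0,*}=N$), together with the uniform $L^1$-bound on $v$ inherited from Lemma \ref{wsdelemma45}, to obtain $v\in L^{q_*}(\Omega)$ uniformly in $t$. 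Provided $k<q_*-1$, Young's inequality converts $C_2\int_\Omega u^k v$ into $\varepsilon\int_\Omega u^{k+1}+C(\varepsilon)\|v\|_{L^{k+1}(\Omega)}^{k+1}\le \frac{\mu}{4}\int_\Omega u^{k+1}+C$, and the standard ODE-comparison argument that closes Lemma \ref{lemma456ssddfff30} delivers a time-uniform bound for $u$ in $L^k(\Omega)$ for such $k$.

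I would then iterate. Once $u$ is uniformly bounded in $L^{k_1}(\Omega)$ for some $k_1>\gamma_0+1$, revisiting the proof of Lemma \ref{lemma45630} with $\gamma_0$ replaced by $k_1-1$ upgrades the bound on $\nabla v$ to $L^{l_1}(\Omega)$ with $l_1=k_1(N+k_1-2)/N>2p_{0,*}$. Re-applying Sobolev then yields $v\in L^{q_*^{(1)}}(\Omega)$ with $q_*^{(1)}>q_*$, which feeds back into the $L^k$-argument and enlarges the admissible range of $k$. After finitely many cycles the corresponding $l_j$ exceeds $N$, Sobolev now produces $v\in L^\infty(\Omega)$, and the proof concludes exactly as in \dref{vgbhnsxcdvfdssddfcz2.5ssddxxdddf1ssddjjjj}. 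The main technical obstacle I expect is verifying that this iteration genuinely improves the exponents at each cycle and escapes to infinity; this should follow from the strict inequality $\mu_*>1$ (hence $\gamma_*>2$) recorded in Remark~(i) after Theorem~\ref{theorem3}, which guarantees that each bootstrap step yields a fixed positive gain in the exponent.
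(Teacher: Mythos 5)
Your proposed route differs genuinely from the paper's: you want to decouple the $u$-estimate from the $\nabla v$-estimate and bootstrap between them, whereas the paper couples them through the energy functional $\frac{1}{k}\|u\|_{L^k(\Omega)}^k+\frac{1}{2\beta}\|\nabla v\|_{L^{2\beta}(\Omega)}^{2\beta}$ of Lemma \ref{pplemma45630} and then absorbs all cross terms into the two dissipations simultaneously. Unfortunately there is a concrete gap in the decoupled approach: the chemotactic term $\int_\Omega u^{k+1-m}|\nabla v|^2$ cannot be absorbed ``as in \dref{cz2.57151hhkkhhhjukildrfthjjhhhhh}--\dref{cz2.563022222ikopl2sdfg44}'' for arbitrarily large $k$ once $2p_{0,*}\le N$. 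The Gagliardo--Nirenberg inequality of Lemma \ref{lemma41} requires the target exponent to be at most $\frac{2N}{N-2}$, i.e.
$$\frac{2p_{0,*}(k+1-m)}{(p_{0,*}-1)(k+m-1)}\le\frac{2N}{N-2},$$
which, after rearranging, reads $(N-2p_{0,*})\,k\le(m-1)(2Np_{0,*}-N-2p_{0,*})$. In Lemma \ref{lemma456ssddfff30} this is a lower bound on $k$ because $N-2p_{0,*}<0$; under the hypothesis \dref{zjscz2.529df763xddfffy} it becomes a finite upper bound on $k$, so the very step you are importing breaks for large $k$. Ensuring that the GN \emph{interior} exponent stays below $2$ (which \dref{zjsssdd78888dcz2.5297x9ssssddd63ss0ssddxxy} does guarantee) is not enough; the \emph{outer} Lebesgue index must also stay subcritical.

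The paper circumvents exactly this by changing the H\"older split for $J_1$: it writes $\int_\Omega u^{k+1-m}|\nabla v|^2\le\left(\int_\Omega u^{\frac{N}{N-2}(k+1-m)}\right)^{\frac{N-2}{N}}\left(\int_\Omega|\nabla v|^{N}\right)^{\frac{2}{N}}$, which makes the outer GN index for $u^{(k+m-1)/2}$ equal to $\frac{2N(k+1-m)}{(N-2)(k+m-1)}\le\frac{2N}{N-2}$ precisely because $m\ge 1$ (a consequence of \dref{zjscz2.529df763xddfffy}); the price is the factor $\|\nabla v\|_{L^N(\Omega)}^2$, which is \emph{not} bounded a priori but is controlled via Gagliardo--Nirenberg by $\|\nabla|\nabla v|^\beta\|_{L^2(\Omega)}$ and absorbed into the dissipation of the $v$-energy from Lemma \ref{lemma45630} -- this is exactly why the coupled functional (and the term $J_2=\int_\Omega u^2|\nabla v|^{2\beta-2}$, which your proposal does not mention at all) appears. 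Finally, even setting aside the $J_1$-absorption, the iteration you sketch is not shown to escape to infinity: with $q_*=\frac{2p_{0,*}N}{(N-2p_{0,*})_+}$ the first cycle only gives $k_1<q_*-1$, and in regimes where $\mu_*$ is close to $1$ and $N$ is large one has $q_*-1\approx q_{0,*}$, so the next $\gamma_0=k_1-1$ may not exceed $q_{0,*}$ and hence need not improve the $L^p$-bound on $\nabla v$. You flag this as ``the main technical obstacle,'' but the claimed justification ($\mu_*>1$, $\gamma_*>2$) gives no quantitative positive gain per cycle.
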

\begin{proof}

Let $$\bar{\beta}=\max\{\frac{4N{{{\frac{(\mu_*+1)(N+\frac{\chi\max\{1,\lambda_0\}}
{(\chi\max\{1,\lambda_0\}-\mu)_{+}}-1)}{N}}}}}{N+{{{\frac{(\mu_*+1)(N+\frac{\chi\max\{1,\lambda_0\}}
{(\chi\max\{1,\lambda_0\}-\mu)_{+}}-1)}{N}}}}},2-\frac{2}{N}-m,16,8N+2\}.$$
Due to
$$m>\frac{2N}{N+{{{\frac{(\mu_*+1)(N+\frac{\chi\max\{1,\lambda_0\}}
{(\chi\max\{1,\lambda_0\}-\mu)_{+}}-1)}{N}}}}},$$
as well as \dref{zjscz2.529df763xddfffy} and Lemma \ref{lemma45630},
%
we may choose $q_{0,***}\in(1,\mu_*)$ 
which is close to $\mu_*$ 
 such that
%
\begin{equation}
\begin{array}{rl}
&\frac{2N(m -1)q_{0,***}}{N-2q_{0,***}}(\frac{2}{N}-1+\frac{\beta}{q_{0,***}})+2-m-\frac{2}{N}\\
&>\disp{ \frac{N(1-\frac{4}{\beta})}{1+\frac{N}{2q_{0,***}}-\frac{4N}{\beta}}(\frac{2}{N}-1+\frac{\beta}{q_{0,***}})+2-m-\frac{2}{N}}\\
\end{array}
\label{sxcdcz2.57151hhkkhhhjukildrfthjjhhhhh}
\end{equation}
for  all $\beta\geq\bar{\beta}.$
Therefore, we can choose
\begin{equation}
k\in( \frac{N(1-\frac{4}{\beta})}{1+\frac{N}{2q_{0,***}}-\frac{4N}{\beta}}(\frac{2}{N}-1+\frac{\beta}{q_{0,***}})+2-m-\frac{2}{N},\frac{2N(m -1)q_{0,***}}{N-2q_{0,***}}(\frac{2}{N}-1+\frac{\beta}{q_{0,***}})+2-m-\frac{2}{N}).
\label{dcfvzsxcvgcz2.57151hhkkhhhjukildrfthjjhhhhh}
\end{equation}

Now for the above $k$,
by the  H\"{o}lder inequality, we have
\begin{equation}
\begin{array}{rl}
J_1:&=\disp{ \disp\frac{ \chi^2(k-1)}{2C_D}\int_\Omega u ^{{k}+1-m } |\nabla v  |^2}\\
&\leq\disp{ \frac{ \chi^2(k-1)}{2C_D}\left(\disp\int_\Omega u ^{{\frac{N}{N-2}}({k}+1-m )}\right)^{{\frac{N-2}{N}}}\left(\disp\int_\Omega |\nabla v  |^{{N}}\right)^{{\frac{2}{N}}}}\\
&=\disp{ \frac{ \chi^2(k-1)}{2C_D}\|   u ^{\frac{{k}+m-1}{2}}\|^{\frac{2({k}+1-m )}{{k}+m-1}}_{L^{\frac{{\frac{2N}{N-2}}({k}+1-m )}{{k}+m-1}}(\Omega)}
 \|\nabla  v   \|_{L^{{N}}(\Omega)}^2.}\\
\end{array}
\label{cz2.57151hhkkhhhjukildrfthjjhhhhh}
\end{equation}
Next, by \dref{zjscz2.529df763xddfffy}, we derive that $m\geq 1 $, so that, in view of
$N\geq3$,
we have
$$\frac{1}{{k}+m-1}\leq\frac{{k}+1-m}{{k}+m-1}\frac{{\frac{N}{2}}}{({\frac{N}{2}}-1)_+}\leq\frac{N}{(N-2)_+},$$
so that,  by using Gagliardo-Nirenberg interpolation inequality (Lemma \ref{lemma41}) and $L^1(\Omega)$ boundedness for $u $ (see Lemma \dref{wsdelemma45})
we
have
\begin{equation}
\begin{array}{rl}
&\disp{\frac{ \chi^2(k-1)}{2C_D}\|   u ^{\frac{{k}+m-1}{2}}\|
^{\frac{2({k}+1-m )}{{k}+m-1}}_{L^{\frac{{\frac{2N}{N-2}}({k}+1-m )}{{k}+m-1}}(\Omega)}}
\\
\leq&\disp{C_1(\|\nabla    u ^{\frac{{k}+m-1}{2}}\|_{L^2(\Omega)}^{\mu_1}\|  u ^{\frac{{k}+m-1}{2}}\|_{L^\frac{2}{{k}+m-1}(\Omega)}^{1-\mu_1}+\|  u ^{\frac{{k}+m-1}{2}}\|_{L^\frac{2}{{k}+m-1}(\Omega)})^{\frac{2({k}+1-m )}{{k}+m-1}}}\\
\leq&\disp{C_2(\|\nabla    u ^{\frac{{k}+m-1}{2}}\|_{L^2(\Omega)}^{\frac{2({k}+1-m )\mu_1}{{k}+m-1}}+1)}\\
\end{array}
\label{czzsxcv2.563022222ikopl2sdfg44}
\end{equation}
with some positive constants $C_{1}$ as well as $C_2$ and
$$\mu_1=\frac{\frac{N[{k}+m-1]}{2}-\frac{N({k}+m-1)}{{\frac{2N}{N-2}}({k}+1-m )}}{1-\frac{N}{2}+\frac{N[{k}+m-1]}{2}}=
[{k}+m-1]\frac{\frac{N}{2}-\frac{N}{{\frac{2N}{N-2}}({k}+1-m )}}{1-\frac{N}{2}+\frac{N[{k}+m-1]}{2}}\in(0,1).$$

On the other hand, due to Lemma \ref{lemma41} and the fact that $\beta\geq\bar{\beta}>\frac{N}{2}$ and $N>2$, we have
\begin{equation}
\begin{array}{rl}
 \|\nabla  v   \|_{L^{{N}}(\Omega)}^2=&\disp{\| |\nabla v  |^\beta\|_{L^{\frac{{N}}{\beta}}(\Omega)}^{\frac{2}{\beta}}}
\\
\leq&\disp{C_3(\|\nabla |\nabla v  |^\beta\|_{L^2(\Omega)}^{\frac{2\mu_2}{\beta}}\| |\nabla v  |^\beta\|_{L^\frac{2q_{0,***}}{\beta}(\Omega)}^{\frac{2(1-\mu_2)}{\beta}}+\|   |\nabla v  |^\beta\|_{L^\frac{2q_{0,***}}{\beta}
(\Omega)}^{\frac{2}{\beta}})}\\
\leq&\disp{C_{4}(\|\nabla |\nabla v  |^\beta\|_{L^2(\Omega)}^{\frac{2\mu_2}{\beta}}+1),}\\
\end{array}
\label{czcfv2.563022222ikopl255}
\end{equation}
where 
some positive constants $C_3$ as well as  $C_{4}$ and
$$\mu_2=\frac{\frac{N\beta}{2q_{0,***}}-\frac{N\beta}{{N}}}{1-\frac{N}{2}+\frac{N\beta}{2q_{0,***}}}=
\beta\frac{\frac{N}{2q_{0,***}}-\frac{N}{{N}}}{1-\frac{N}{2}+\frac{N\beta}{2q_{0,***}}}\in(0,1).$$
Inserting \dref{czzsxcv2.563022222ikopl2sdfg44}--\dref{czcfv2.563022222ikopl255} into \dref{cz2.57151hhkkhhhjukildrfthjjhhhhh} as well as    by means of the Young inequality and \dref{dcfvzsxcvgcz2.57151hhkkhhhjukildrfthjjhhhhh} we see that for any $\delta>0,$
%
\begin{equation}
\begin{array}{rl}
J_1\leq&\disp{C_{5}(\|\nabla    u ^{\frac{{k}+m-1}{2}}\|_{L^2(\Omega)}^{\frac{2({k}+1-m )\mu_1}{{k}+m-1}}+1)(\|\nabla |\nabla v  |^\beta\|_{L^2(\Omega)}^{\frac{2\mu_2}{\beta}}+1)}\\
=&\disp{C_{5}(\|\nabla    u ^{\frac{{k}+m-1}{2}}\|_{L^2(\Omega)}^{\frac{{N}({{k}+1-m -{\frac{N-2}{N}}})}{1-\frac{N}{2}+\frac{N[{k}+m-1]}{2}}}+1)
(\|\nabla |\nabla v  |^\beta\|_{L^2(\Omega)}^{\frac{{N}(\frac{2}{2q_{0,***}}-\frac{1}{{\frac{N}{2}}})}{1-\frac{N}{2}+\frac{N\beta}{2q_{0,***}}}}+1)}\\
\leq&\disp{\delta\disp\int_{\Omega} |\nabla  u ^{\frac{m+k-1}{2}}|^2+\delta\|\nabla |\nabla v  |^\beta\|_{L^2(\Omega)}^2
+C_{6}~~\mbox{for all}~~ t\in(0,T_{max}) }\\
\end{array}
\label{cz2.57151hhkkhhhjukildrftdfrtgyhu}
\end{equation}
for all
$\beta\geq\bar{\beta}$.
Here we have use the fact that
$k < \frac{2N(m -1)q_{0,***}}{N-2q_{0,***}}(\frac{2}{N}-1+\frac{\beta}{q_{0,***}})+2-m-\frac{2}{N}$
 and $k>2-\frac{2}{N}-m.$
Next, due to  the  H\"{o}lder inequality and $\beta\geq\bar{\beta}>16$, we have
\begin{equation}
\begin{array}{rl}
J_2:&=\disp{ \disp  \int_\Omega u  ^2 |\nabla v  |^{2\beta-2}}\\
&\leq\disp{  \left(\disp\int_\Omega  u  ^{{\frac{\beta}{4}}}\right)^{{\frac{8}{\beta}}}\left(\disp\int_\Omega |\nabla v  |^{{\frac{\beta(2\beta-2)}{\beta-8}}}\right)^{{\frac{\beta-8}{\beta}}}}\\
&\leq\disp{  \left(\disp\int_\Omega  u ^{{\frac{\beta}{4}}}\right)^{{\frac{8}{\beta}}}\left(\disp\int_\Omega |\nabla v  |^{{\frac{\beta(2\beta-2)}{\beta-8}}}\right)^{{\frac{\beta-8}{\beta}}}}\\
&=\disp{  \|   u ^{\frac{{k}+m-1}{2}}\|^{\frac{4}{{k}+m-1}}_{L^{\frac{{\frac{\beta}{2}}}{{k}+m-1}}(\Omega)}
 \|\nabla  v   \|_{L^{{\frac{\beta(2\beta-2)}{\beta-8}}}(\Omega)}^{(2\beta-2)}.}\\
\end{array}
\label{czasxc2.57151hhkkhhhjukildrfttyuijkoghyu66}
\end{equation}

On the other hand, with the help of $k>1-m+{\frac{N-2}{4N}}\beta$, $\beta\geq\bar{\beta}\geq4$ and Lemma  \ref{lemma41} we conclude that
\begin{equation}
\begin{array}{rl}
&\disp{ \|   u ^{\frac{{k}+m-1}{2}}\|^{\frac{4}{{k}+m-1}}_{L^{\frac{{\frac{\beta}{2}}}{{k}+m-1}}(\Omega)}}
\\
\leq&\disp{C_{7}(\|\nabla    u ^{\frac{{k}+m-1}{2}}\|_{L^2(\Omega)}^{\mu_3}\|  u ^{\frac{{k}+m-1}{2}}\|_{L^\frac{2}{{k}+m-1}(\Omega)}^{(1-\mu_3)}+\|   u ^{\frac{{k}+m-1}{2}}\|_{L^\frac{2}{{k}+m-1}(\Omega)})^{\frac{4}{{k}+m-1}}}\\
\leq&\disp{C_{8}(\|\nabla    u ^{\frac{{k}+m-1}{2}}\|_{L^2(\Omega)}^{\frac{4\mu_3}{{k}+m-1}}+1)}\\
\end{array}
\label{czwsdc2.563022222ikopl2sdfgggjjkkk66}
\end{equation}
with some positive constants $C_{7}$ as well as $C_{8}$ and
$$\mu_3=\frac{\frac{N[{k}+m-1]}{2}-\frac{N({k}+m-1)}{{\frac{\beta}{2}}}}{1-\frac{N}{2}+\frac{N[{k}+m-1]}{2}}=
[{k}+m-1]\frac{\frac{N}{2}-\frac{N}{{\frac{\beta}{2}}}}{1-\frac{N}{2}+\frac{N[{k}+m-1]}{2}}\in(0,1).$$

On the other hand, again, it infers by the Gagliardo-Nirenberg inequality (Lemma \ref{lemma41}) that there are $C_{9}>0$  and $C_{10}>0$
ensuring
%
\begin{equation}
\begin{array}{rl}
 \|\nabla  v   \|_{L^{{\frac{\beta(2\beta-2)}{\beta-8}}}(\Omega)}^{(2\beta-2)}=&\disp{\| |\nabla v  |^\beta\|_{L^{\frac{{\frac{\beta(2\beta-2)}{\beta-8}}}{\beta}}(\Omega)}^{\frac{2\beta-2}{\beta}}}
\\
\leq&\disp{C_{9}(\|\nabla |\nabla v  |^\beta\|_{L^2(\Omega)}^{\frac{(2\beta-2)\mu_4}{\beta}}\| |\nabla v  |^\beta\|_{L^\frac{2q_{0,***}}{\beta}(\Omega)}^{\frac{(2\beta-2)(1-\mu_4)}{\beta}}+\|   |\nabla v  |^\beta\|_{L^\frac{2q_{0,***}}{\beta}
(\Omega)}^{\frac{(2\beta-2)}{\beta}})}\\
\leq&\disp{C_{10}(\|\nabla |\nabla v  |^\beta\|_{L^2(\Omega)}^{\frac{(2\beta-2)\mu_4}{\beta}}+1)}\\
\end{array}
\label{czqscvb2.563022222ikopl2gg66}
\end{equation}
for any $\beta\geq\bar{\beta}>\frac{7N+2}{2}$,
where 
$$\mu_4=\frac{\frac{N\beta}{2q_{0,***}}-\frac{N\beta}{{\frac{\beta(2\beta-2)}{\beta-8}}}}{1-\frac{N}{2}+\frac{N\beta}{2q_{0,***}}}=
\beta\frac{\frac{N}{2q_{0,***}}-\frac{N}{{\frac{\beta(2\beta-2)}{\beta-8}}}}{1-\frac{N}{2}+\frac{N\beta}{2q_{0,***}}}\in(0,1).$$
Inserting \dref{czwsdc2.563022222ikopl2sdfgggjjkkk66}--\dref{czqscvb2.563022222ikopl2gg66} into \dref{czasxc2.57151hhkkhhhjukildrfttyuijkoghyu66} and using $k > \frac{N(1-\frac{4}{\beta})}{1+\frac{N}{2q_{0,***}}-\frac{4N}{\beta}}(\frac{2}{N}-1+\frac{\beta}{q_{0,***}})+2-m-\frac{2}{N}$
 and $k>2-\frac{2}{N}-m$
and Lemma \ref{lemma41}, we derive that for the above  $\delta>0,$
\begin{equation}
\begin{array}{rl}
J_2\leq&\disp{C_{11}(\|\nabla    u ^{\frac{{k}+m-1}{2}}\|_{L^2(\Omega)}^{
\frac{{N}(2-{\frac{8}{\beta}})}{1-\frac{N}{2}+\frac{N[{k}+m-1]}{2}}}+1)
(\|\nabla |\nabla v  |^\beta\|_{L^2(\Omega)}^{\frac{{N}(\frac{2\beta-2}{2q_{0,***}}-{\frac{\beta-8}{\beta}})}{1-\frac{N}{2}+\frac{N\beta}{2q_{0,***}}}}+1)}\\
\leq&\disp{\delta\disp\int_{\Omega} |\nabla  u ^{\frac{m+k-1}{2}}|^2+\delta\|\nabla |\nabla v  |^\beta\|_{L^2(\Omega)}^2
+C_{12}~~\mbox{for all}~~ t\in(0,T_{max}).  }\\
\end{array}
\label{cz2.57151hhkkhhhjukildrftdfrtgyhugg}
\end{equation}
Finally, with the help of  \dref{cz2.5ghju48cfg924ghyuji} and by the Sobolev inequality, the Young inequality and \dref{dcfvzsxcvgcz2.57151hhkkhhhjukildrfthjjhhhhh}, we conclude that for the above  $\delta>0,$ there exist positive constants $C_{13},C_{14}$ as well as $C_{15}$
and $C_{16}$ such that
\begin{equation}\label{hhcfvgbhhjui909klopji115}
\begin{array}{rl}
J_3=&\disp{\int_\Omega v ^{k+1}}\\
\leq &\disp{C_{13}\|v \|^{k+1}_{L^\infty(\Omega)}}\\
\leq &\disp{C_{14}(\|\nabla v \|^{k+1}_{L^{N+1}(\Omega)}+1)}\\
\leq &\disp{C_{15}(\|\nabla v \|^{k+1}_{L^{2\beta}(\Omega)}+1)}\\
\leq &\disp{\delta\|\nabla v \|^{2\beta}_{L^{2\beta}(\Omega)}+C_{16}}\\
\end{array}
\end{equation}
for
all $\beta\geq\bar{\beta}>N+1.$
Now, inserting \dref{cz2.57151hhkkhhhjukildrftdfrtgyhu},  \dref{cz2.57151hhkkhhhjukildrftdfrtgyhugg}--\dref{hhcfvgbhhjui909klopji115} into \dref{1234hjui909klopji115} and using the Young inequality and choosing $\delta$ small enough yields to
\begin{equation}\label{dfvvfdcvfbhjui909klopji115}
\begin{array}{rl}
&\disp{\frac{d}{dt}(\frac{1}{k}\| u  \|^{k}_{L^k(\Omega)}+\frac{1}{{2\beta}}\|\nabla v \|^{{{2\beta}}}_{L^{{2\beta}}(\Omega)})+\frac{3(\beta-1)}{8{\beta^2}}\disp\int_{\Omega}\left|\nabla |\nabla v |^{\beta}\right|^2+\frac{\mu}{2}\int_{\Omega} u  ^{k+1}}\\
&+\disp{\frac{1}{2}\disp\int_\Omega  |\nabla v |^{2\beta-2}|D^2v  |^2+\disp\frac{1}{2}\int_{\Omega} |\nabla v |^{2\beta}+\frac{(k-1) C_D }{8}\int_{\Omega} u  ^{m+k-3}|\nabla u  |^2{}}\\
\leq&\disp{C_{16}~~\mbox{for all}~~ t\in(0,T_{max})}\\
\end{array}
\end{equation}
and  some positive constant $C_{16}$.
Therefore, letting
  $y:=\disp\int_{\Omega} u  ^{k}  +\disp\int_{\Omega} |\nabla  v   |^{2\beta} $ 
in \dref{dfvvfdcvfbhjui909klopji115} yields to
$$\frac{d}{dt}y(t)+C_{17}y(t)\leq C_{18}~~\mbox{for all}~~ t\in(0,T_{max}).$$
 Thus a standard ODE comparison argument implies
boundedness of $y(t)$ for all $t\in (0, T_{max})$. Clearly, $\| u  (\cdot, t)\|_{L^k(\Omega)}$ and $\|\nabla  v   (\cdot, t)\|_{L^{2\beta}(\Omega)}$
are bounded for all $t\in (0, T_{max})$.
Obviously,
$\lim_{\beta\rightarrow+\infty}\disp{ \frac{N(1-\frac{4}{\beta})}{1+\frac{N}{2q_{0,***}}-\frac{4N}{\beta}}(\frac{2}{N}-1+\frac{\beta}{q_{0,***}})}=
\lim_{\beta\rightarrow+\infty}\frac{2N(m -1)q_{0,***}}{N-2q_{0,***}}(\frac{2}{N}-1+\frac{\beta}{q_{0,***}})+2-m-\frac{2}{N}=+\infty,$
hence,
the boundedness of $\| u  (\cdot, t)\|_{L^k(\Omega)}$ and the H\"{o}lder inequality implies the results.
\end{proof}

Employing Lemmas \ref{lemma456ssddfff30}--\ref{lemmaddff45ssdd630}, we can prove the following lemma.

\begin{lemma}\label{sddlemmddffa45630}
Let $\Omega\subset \mathbb{R}^N(N\geq1)$ be a bounded domain with smooth boundary. Furthermore,
assume that $m>\frac{2N}{N+\gamma_*}$ with $N\geq1$,
 where $\gamma_*$ is given by \dref{xeerr1.731426677gg}. Then
 for any $k>1$, there  exists a positive constant  $C$  such that
\begin{equation}
\|u (\cdot, t)\|_{L^k(\Omega)}\leq C ~~\mbox{for all}~~ t\in(0, T_{max}).
\label{zjscz2.529ddff7xssdd9630xxy}
\end{equation}
\end{lemma}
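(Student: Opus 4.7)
The plan is to obtain this consolidated $L^k$-bound by a clean case distinction that combines the three preparatory results already proved. Concretely, I would split the argument according to whether $N=2$ and, for $N\neq 2$, according to whether the size quantity $\Psi(\mu_*):=\frac{(\mu_*+1)(N+\mu_*-1)}{2N}$ exceeds $\frac{N}{2}$ or not; these are precisely the three regimes covered by the $N=2$ version of Lemma \ref{lemma456ssddfff30}, the general-$N$ version of Lemma \ref{lemma456ssddfff30} under $\Psi(\mu_*)>N/2$, and Lemma \ref{lemmaddff45ssdd630} under $\Psi(\mu_*)\le N/2$.

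First I would dispose of $N=2$ directly: the hypothesis $m>\frac{2N}{N+\gamma_*}$ with $\gamma_*=\frac{(\mu_*+1)(N+\mu_*-1)}{N}$ becomes precisely $m>\frac{8}{4+(\mu_*+1)^2}$, which is exactly the assumption invoked by the two-dimensional Lemma \ref{lemma456ssddfff30}; that lemma then delivers \dref{zjscz2.529ddff7xssdd9630xxy}. For $N=1$, I note that $\mu>0$ forces $\mu_*>1$, hence $\Psi(\mu_*)=\mu_*(\mu_*+1)/2>1>\tfrac{N}{2}$, and the second Lemma \ref{lemma456ssddfff30} applies verbatim. For $N\ge 3$, I branch: if $\Psi(\mu_*)>\tfrac{N}{2}$ the hypothesis \dref{zjscz2.529df763xy} is met and the second Lemma \ref{lemma456ssddfff30} yields the conclusion, while if $\Psi(\mu_*)\le\tfrac{N}{2}$ the complementary hypothesis \dref{zjscz2.529df763xddfffy} holds and Lemma \ref{lemmaddff45ssdd630} applies. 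In either sub-case the bound $\|u(\cdot,t)\|_{L^k(\Omega)}\le C$ follows for arbitrary $k>1$.

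The main obstacle is really only bookkeeping: one must verify that in each regime the structural inequality $m>\frac{2N}{N+\gamma_*}$ transmits to the slightly more restrictive hypothesis required by the appropriate sub-lemma, and that $\mu_*>1$ (which follows from $\mu>0$) is used systematically to rule out degenerate choices of the free exponents $q_{0,*}$, $q_{0,***}$ chosen close to $\mu_*$. All the analytic content, namely the coupled energy inequality \dref{1234hjui909klopji115}, the Gagliardo--Nirenberg interpolation of $\|u^{(k+m-1)/2}\|$ and $\||\nabla v|^\beta\|$, the boundary-trace handling, and the final ODE comparison, was already carried out in Lemmas \ref{lemma456ssddfff30}--\ref{lemmaddff45ssdd630}, so the present lemma is a genuine corollary and no further estimation is needed.
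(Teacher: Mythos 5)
Your proof is correct and reconstructs exactly the case distinction that the paper leaves implicit (the paper offers no proof, merely the remark that Lemmas~\ref{lemma456ssddfff30}--\ref{lemmaddff45ssdd630} combine to give the result). Your observations that the $N=2$ hypothesis reduces to $m>\frac{8}{4+(\mu_*+1)^2}$, that $\mu_*>1$ forces $\Psi(\mu_*)>N/2$ automatically when $N=1$, and that for $N\ge3$ the two remaining sub-lemmas cover the complementary regimes $\Psi(\mu_*)>N/2$ and $\Psi(\mu_*)\le N/2$, are all accurate and supply precisely the missing bookkeeping.
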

Along with the Duhamel's principle and $L^p$-$L^q$ estimates for Neumann heat semigroup, the above lemma yields the following Lemma.


\begin{lemma}\label{lemdffmddffa45630}
Let $(u , v ,w )$ be the solution of the problem  \dref{7101.2x19x3sss189}. Then
\begin{equation}
\|v (\cdot, t)\|_{W^{1,\infty}(\Omega)}\leq C ~~\mbox{for all}~~ t\in(0, T_{max})
\label{zjscz2.ssdd529ddff7xssdd9630xxy}
\end{equation}
\end{lemma}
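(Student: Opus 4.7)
The plan is to apply a variation-of-constants representation to the second equation of \dref{7101.2x19x3sss189} combined with the standard smoothing properties of the Neumann heat semigroup on $\Omega$. Denoting by $(e^{t\Delta})_{t\geq 0}$ the semigroup generated by the Neumann Laplacian, the $v$-equation $v_t = \Delta v - v + u$ together with the initial and boundary conditions yields the representation
\begin{equation*}
v(\cdot, t) = e^{-t} e^{t\Delta} v_0 + \int_0^t e^{-(t-s)} e^{(t-s)\Delta} u(\cdot, s)\, ds \quad \text{for all } t \in (0, T_{max}).
\end{equation*}

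First, I would fix some $q > N$ and use Lemma \ref{sddlemmddffa45630} applied with $k = q$ to obtain a constant $M > 0$ such that $\|u(\cdot, s)\|_{L^q(\Omega)} \leq M$ for all $s \in (0, T_{max})$. Differentiating the Duhamel formula and invoking the standard smoothing estimate
\begin{equation*}
\|\nabla e^{t\Delta} f\|_{L^\infty(\Omega)} \leq C_1 \bigl(1 + t^{-\frac{1}{2} - \frac{N}{2q}}\bigr) \|f\|_{L^q(\Omega)}, \quad t > 0,
\end{equation*}
(which is by now classical, see e.g.\ the Neumann heat semigroup estimates frequently employed in the chemotaxis literature) leads to
\begin{equation*}
\|\nabla v(\cdot, t)\|_{L^\infty(\Omega)} \leq C_1 \|\nabla v_0\|_{L^\infty(\Omega)} + C_1 M \int_0^t \bigl(1 + (t-s)^{-\frac{1}{2} - \frac{N}{2q}}\bigr) e^{-(t-s)}\, ds.
\end{equation*}

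The first term on the right is finite thanks to the assumption $v_0 \in W^{1,\infty}(\Omega)$ made in \dref{x1.731426677gg}. The integral on the right is uniformly bounded in $t$: the singularity exponent satisfies $\tfrac{1}{2} + \tfrac{N}{2q} < 1$ precisely because of the choice $q > N$, so the integrand is locally integrable near $s = t$, while the exponential weight $e^{-(t-s)}$ guarantees convergence of the integral uniformly as $t \to \infty$. A parallel and in fact simpler argument using $\|e^{t\Delta} f\|_{L^\infty(\Omega)} \leq C_2 \bigl(1 + t^{-N/(2q)}\bigr) \|f\|_{L^q(\Omega)}$ supplies the corresponding bound for $\|v(\cdot, t)\|_{L^\infty(\Omega)}$; combining both yields \dref{zjscz2.ssdd529ddff7xssdd9630xxy}.

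The only potentially delicate point is the requirement $q > N$ in the $L^q$-bound for $u$, but this is exactly the content of Lemma \ref{sddlemmddffa45630}, which provides $L^k$-boundedness of $u$ for every $k > 1$. Hence no serious obstacle is expected: once the previous lemma is in hand, the present conclusion follows from completely standard semigroup machinery.
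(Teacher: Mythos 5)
Your proof is correct and follows essentially the same route as the paper: the variation-of-constants representation of $v$, the $L^p$-$L^q$ smoothing estimates for the Neumann heat semigroup, and the $L^k$-boundedness of $u$ from Lemma \ref{sddlemmddffa45630} with some $k>N$ (the paper fixes $k=2N$, you keep a general $q>N$). The only cosmetic difference is that the paper's displayed Duhamel formula contains a typo (it writes $v(s)$ instead of $u(s)$ under the integral, and its written singularity exponent is garbled), whereas you state the correct exponent $\tfrac12+\tfrac{N}{2q}<1$ explicitly.
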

\begin{proof}
By Duhamel's principle, we see that the solution $v $ can be expressed as follows
\begin{equation}
v (t)=e^{t(\Delta-1)}v_0 +\int_{0}^{t}e^{(t-s)(\Delta-1)}v (s) ds,~ t\in(0, T_{max}),
\label{5555hhjjjfghbnmcz2.5ghju48cfg924ghyuji}
\end{equation}
where $(e^{ t \Delta})_{t\geq0}$ is the Neumann heat semigroup in $\Omega$.  Using Lemma \ref{sddlemmddffa45630}, we
follow the $L^p$-$L^q$ estimates for Neumann heat semigroup to obtain
for any $t\in(0,T_{max})$, we obtain
\begin{equation}
\begin{array}{rl}
&\| v  (\cdot, t)\|_{W^{1,\infty}(\Omega)}\\
\leq&\disp{e^{- t}\| \nabla v_0\|_{L^{\infty}(\Omega)}+\int_{0}^t(t-s)^{-\frac{N}{2}(\frac{1}{2N}-\frac{1}{2})}e^{-(t-s)}
\|u (\cdot,s)\|_{L^{2N}(\Omega)}ds}\\
\leq&\disp{C_{1}~~ \mbox{for all}~~ t\in(0,T_{max}).}\\
\end{array}
\label{zjccffgbhjcvvvbscz2.5297x96301ku}
\end{equation}
 This lemma is proved.
\end{proof}


Applying Lemma \ref{sddlemmddffa45630} and Lemma  \ref{lemdffmddffa45630}, a straightforward adaptation of the well-established Moser-type iteration procedure \cite{Alikakos72}
allows us to formulate a general condition which is sufficient for the boundedness of $u $.

\begin{lemma}\label{lemdffssddmddffa45630}
Let $(u , v ,w )$ be the solution of the problem  \dref{7101.2x19x3sss189}. Then there exists a positive constant $C$ such that
\begin{equation}
\|u (\cdot, t)\|_{L^{\infty}(\Omega)}\leq C ~~\mbox{for all}~~ t\in(0, T_{max}).
\label{zjscz2.ssdd529ddff7xssddsddd9630xxy}
\end{equation}
\end{lemma}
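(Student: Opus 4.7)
The plan is to carry out a Moser-type iteration essentially along the lines of the classical Alikakos scheme, as already suggested in the excerpt. All the ingredients are in place: by Lemma \ref{sddlemmddffa45630} the family $\{u(\cdot,t)\}_{t\in(0,T_{max})}$ is uniformly bounded in every $L^k(\Omega)$ with $k<\infty$; by Lemma \ref{lemdffmddffa45630} we have $\|\nabla v(\cdot,t)\|_{L^\infty(\Omega)}\leq C$ (and in particular $\|v\|_{L^\infty}\leq C$); and Lemma \ref{lemm3a} supplies a uniform one-sided pointwise bound $-\Delta w \leq \|w_0\|_{L^\infty(\Omega)}\cdot v + \kappa$.

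First I would test the $u$-equation against $u^{p-1}$ for $p>1$ large, using $D(u)\geq C_D u^{m-1}$ to produce the dissipation $C_D(p-1)\int_{\Omega} u^{m+p-3}|\nabla u|^2$, and then integrate the two drift terms by parts. For the chemotaxis term Young's inequality with the $L^\infty$ bound on $\nabla v$ yields
$$\chi(p-1)\int_\Omega u^{p-1}\nabla u\cdot\nabla v \leq \frac{C_D(p-1)}{4}\int_\Omega u^{m+p-3}|\nabla u|^2 + C_1 p\,\|\nabla v\|_{L^\infty}^{2}\int_\Omega u^{p+1-m},$$
while the haptotaxis term is controlled via Lemma \ref{zsxxcdvlemma45630} together with $\|v\|_{L^\infty}\leq C$ to give a contribution of the form $C_2 p\int_\Omega u^p$. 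Absorbing the gradient term on the left and exploiting the logistic absorption $-\mu\int u^{p+1}$, one obtains an inequality of the shape
$$\frac{d}{dt}\int_\Omega u^{p} + \int_\Omega u^{p} + \frac{2 C_D(p-1)}{(m+p-1)^{2}}\int_\Omega \bigl|\nabla u^{(m+p-1)/2}\bigr|^{2} \leq C_3\,p^{\alpha}\int_\Omega u^{p+1-m} + C_3 p\int_\Omega u^{p}$$
with constants independent of $p$.

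Next I would apply the Gagliardo--Nirenberg inequality (Lemma \ref{lemma41}) to $u^{(m+p-1)/2}$ in order to dominate $\int_\Omega u^{p+1-m}$ by a suitable product of $\|\nabla u^{(m+p-1)/2}\|_{L^2}$ and $\|u^{(m+p-1)/2}\|_{L^{2p_{k-1}/(m+p-1)}}$, with interpolation exponent strictly less than $1$. After absorption into the dissipation and a standard ODE comparison, one obtains, for the doubling sequence $p_k:=2^k p_0$ with $p_0$ fixed large (so that $p_0>\max\{1,m-1,N\}$), a recursive estimate
$$M_k \leq (a\, b^{k})^{1/p_k}\, M_{k-1}^{\theta_k}, \qquad M_k := \sup_{t\in(0,T_{max})}\|u(\cdot,t)\|_{L^{p_k}(\Omega)},$$
with exponents $\theta_k\to 1$ and $\sum_k p_k^{-1}\log(a b^k)<\infty$. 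Iterating and taking $k\to\infty$ yields the desired uniform bound $\sup_{t\in(0,T_{max})}\|u(\cdot,t)\|_{L^{\infty}(\Omega)}\leq C$, which is \eqref{zjscz2.ssdd529ddff7xssddsddd9630xxy}.

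The only delicate point is to choose the Gagliardo--Nirenberg exponents so that (i) the interpolation exponent stays strictly below $1$ uniformly in $k$ (so that absorption into the dissipation is legitimate and the resulting power of $\|u\|_{L^{p_{k-1}}}$ is the correct one for the recursion), and (ii) the accumulated constants $\prod_k(a b^{k})^{1/p_k}$ remain finite in the limit. Both requirements are standard and are guaranteed by the choice $p_0$ sufficiently large together with $m+p_k-1\to\infty$; the rest of the argument is a verbatim adaptation of Alikakos' procedure and requires no new idea beyond what is already provided by Lemmas \ref{sddlemmddffa45630}--\ref{lemdffmddffa45630}.
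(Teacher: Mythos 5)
Your proposal follows the same Alikakos--Moser iteration scheme as the paper: test the $u$-equation against $u^{p-1}$, absorb the chemotaxis term via Young's inequality and the $L^\infty$ bound on $\nabla v$ from Lemma \ref{lemdffmddffa45630}, control the haptotaxis term through Lemma \ref{zsxxcdvlemma45630} and $\|v\|_{L^\infty}\leq C$, estimate the remaining lower-order powers of $u$ by Gagliardo--Nirenberg and absorb into the dissipation $\int_\Omega|\nabla u^{(m+p-1)/2}|^2$, and then iterate along $p_k=2^kp_0$. This is precisely what the paper does, and the ingredients you invoke (Lemmas \ref{sddlemmddffa45630}, \ref{lemdffmddffa45630}, \ref{zsxxcdvlemma45630}) are the ones the paper uses. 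The only point worth flagging is that the paper treats the two regimes $m\geq 1$ and $0<m<1$ separately (when $0<m<1$ the exponent $p+1-m$ exceeds $p$ and the Gagliardo--Nirenberg interpolation has to be checked a little more carefully, which the paper does by splitting $\Omega$ into $\{u>\varepsilon\}$ and $\{u\leq\varepsilon\}$); your sketch glosses over this distinction, but since the final Young absorption condition $a\cdot\tfrac{2(p+1-m)}{m+p-1}<2$ holds for all large $p$ in both regimes, the omission is a presentational shortcut rather than a gap. In sum, your approach is correct and essentially identical to the paper's.
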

\begin{proof}
Firstly, by Lemma \ref{sddlemmddffa45630}, we obtain that for any $k > 0$,
\begin{equation}
\|u (\cdot,t)\|_{L^k(\Omega)}\leq \alpha_k~~\mbox{for all}~~ t\in(0, T_{max}),
\label{zjscz2.5297x9630111rrddtt}
\end{equation}
where $\alpha_k$ depends on $k$.
Multiplying the first equation of  \dref{7101.2x19x3sss189} by $ku^{k-1} $
with $k\geq\max\{2m, N+2\}$, integrating it over $\Omega$, then using the boundary condition
$\frac{\partial u }{\partial \nu}=0$ and  combining with Lemma \ref{lemdffmddffa45630}, we obtain
\begin{equation}
\begin{array}{rl}
&\disp{\frac{d}{dt}\|u \|^{k}_{L^k(\Omega)}+k (k-1)  C_D \int_{\Omega} (u +\varepsilon)^{m-1}u ^{k-2}|\nabla u  |^2{}+
\mu k\int_{\Omega} u ^{k+1}+\int_{\Omega} u ^k{}}
\\
\leq&\disp{k(k-1)\chi\int_{\Omega} u ^{k-1}\nabla u \cdot\nabla v -\xi\int_\Omega \nabla\cdot(  u  \nabla w )
   u ^{k-1}+(\mu k+1)\int_{\Omega} u ^k}\\
   \leq&\disp{k(k-1)\chi\int_{\Omega} u ^{k-1}\nabla u \cdot\nabla v }\\
   &\disp{+\frac{({k-1})}{k}\xi\|w_0\|_{L^\infty(\Omega)}\int_\Omega u^k  v  dx+\kappa \frac{({k-1})}{k}\xi \int_\Omega u^k  dx+(\mu k+1)\int_{\Omega} u ^k}\\
\leq&\disp{\frac{k (k-1)  C_D }{2}\int_{\Omega} (u +\varepsilon)^{m-1}u ^{k-2}|\nabla u  |^2+C_1k\int_{\Omega} u ^{k}+C_2k^2\int_{\Omega} u ^k(u +\varepsilon)^{1-m},}\\
\end{array}
\label{vgbhnsxcdvfddfcz2.5xxdsddddddf1jjssdsddffjj}
\end{equation}
by using \dref{cz2.563019rrtttt12},
where $C_{1}>0,C_{2}>0$, as all subsequently appearing constants $C_i(i= 3, 4, \ldots)$ are independent of $k$.
In what follows, we estimate the last two terms of \dref{vgbhnsxcdvfddfcz2.5xxdsddddddf1jjssdsddffjj}.
When $m\geq 1$, then, $$(u +\varepsilon)^{1-m}\leq u ^{1-m}~~~\mbox{and}~~~(u +\varepsilon)^{m-1}\geq u ^{m-1},$$
so that,
 by \dref{vgbhnsxcdvfddfcz2.5xxdsddddddf1jjssdsddffjj}, we have
 \begin{equation}
\begin{array}{rl}
&\disp{\frac{d}{dt}\|u \|^{k}_{L^k(\Omega)}+\frac{k (k-1)  C_D }{2}\int_{\Omega} u ^{m+k-3}|\nabla u  |^2{}+\int_{\Omega} u ^k{}}
\\
\leq&\disp{C_1k\int_{\Omega} u ^{k}+C_2k^2\int_{\Omega} u ^{k+1-m}.}\\
\end{array}
\label{vgbhnsxcdvfddfcz2.5xxdsddddddf1jjssdssddsddffjj}
\end{equation}
In order to take full advantage of the dissipated quantities appearing on the left-hand side herein,  for any $\delta>0,$
we
first invoke the Gagliardo-Nirenberg inequality which provides $C_3>0,C_4 > 0$ as well as $C_5 > 0$ and $C_6 > 0$ such that
\begin{equation}
\begin{array}{rl}
&\disp{C_1k \|   u \|^{k}_{L^{k}(\Omega)}}\\
=&\disp{ C_1k \|   u ^{\frac{{k}+m-1}{2}}\|^{\frac{2k}{{k}+m-1}}_{L^{\frac{{2k}}{{k}+m-1}}(\Omega)}}
\\
\leq&\disp{C_{1}k\|\nabla    u ^{\frac{{k}+m-1}{2}}\|_{L^2(\Omega)}^{\frac{2Nk}{(N+2){k}+2N(m-1)}}\|  u ^{\frac{{k}+m-1}{2}}\|_{L^\frac{k}{{k}+m-1}(\Omega)}^{\frac{2k}{{k}+m-1}-\frac{2Nk}{(N+2){k}+2N(m-1)}}+C_{3}k\|   u ^{\frac{{k}+m-1}{2}}\|_{L^\frac{k}{{k}+m-1}(\Omega)}^{\frac{2k}{{k}+m-1}}}\\
\leq&\disp{\delta\|\nabla    u ^{\frac{{k}+m-1}{2}}\|_{L^2(\Omega)}^{2}+C_4 k^{\frac{(N+2)k+2N(m-1)}{2k+2N(m-1)}}\|  u ^{\frac{{k}+m-1}{2}}\|_{L^\frac{k}{{k}+m-1}(\Omega)}^{\frac{k[2k+N(m-1)]}{({k}+m-1)(k+N(m-1))}}+C_{3}k\|   u ^{\frac{{k}+m-1}{2}}\|_{L^\frac{k}{{k}+m-1}(\Omega)}^{\frac{2k}{{k}+m-1}}}\\
\leq&\disp{\delta\|\nabla    u ^{\frac{{k}+m-1}{2}}\|_{L^2(\Omega)}^{2}+C_4 k^{\frac{N+2}{2}} \|  u \|_{L^{\frac{k}{2}}(\Omega)}^{\frac{k[2k+N(m-1)]}{(2k+2N(m-1))}}+C_{3}k\|   u \|_{L^\frac{k}{2}(\Omega)}^{k}}\\
\end{array}
\label{czwsdc2.563022222ikoplssd2sdfgggjjkkk66}
\end{equation}
and
\begin{equation}
\begin{array}{rl}
&\disp{C_2k^2 \|   u \|^{k+1-m}_{L^{k+1-m}(\Omega)}}\\
=&\disp{ C_2k^2 \|   u ^{\frac{{k}+m-1}{2}}\|^{\frac{2(k+1-m)}{{k}+m-1}}_{L^{\frac{{2(k+1-m)}}{{k}+m-1}}(\Omega)}}
\\
\leq&\disp{C_{5}k^2\|\nabla    u ^{\frac{{k}+m-1}{2}}\|_{L^2(\Omega)}^{\frac{2N[k-(2m-2)]}{(N+2){k}+2N(m-1)}}\|  u ^{\frac{{k}+m-1}{2}}\|_{L^\frac{k}{{k}+m-1}(\Omega)}^{\frac{2(k+1-m)}{{k}+m-1}-\frac{2N[k-(2m-2)]}{(N+2){k}+2N(m-1)}}+C_{5}k^2\|   u ^{\frac{{k}+m-1}{2}}\|_{L^\frac{k}{{k}+m-1}(\Omega)}^{\frac{2(k-1+m)}{{k}+m-1}}}\\
\leq&\disp{\delta\|\nabla    u ^{\frac{{k}+m-1}{2}}\|_{L^2(\Omega)}^{2}+C_6 k^{\frac{(N+2)k+2N(m-1)}{k+2N(m-1)}}\|  u ^{\frac{{k}+m-1}{2}}\|_{L^\frac{k}{{k}+m-1}(\Omega)}^{\frac{k[k+2(m-1)]}{({k}+m-1)(\frac{k}{2}+N(m-1))}}+C_{5}k^2\|   u ^{\frac{{k}+m-1}{2}}\|_{L^\frac{k}{{k}+m-1}(\Omega)}^{\frac{2(k-1+m)}{{k}+m-1}}}\\
\leq&\disp{\delta\|\nabla    u ^{\frac{{k}+m-1}{2}}\|_{L^2(\Omega)}^{2}+C_6 k^{N+2}\|  u \|_{L^{\frac{k}{2}}(\Omega)}^{\frac{k[k+2(m-1)]}{(k+2N(m-1))}}+C_{5}k^2\|   u \|_{L^\frac{k}{2}(\Omega)}^{k+1-m}.}\\
\end{array}
\label{czwsdc2.563022222ikoplddffssd2sdfgggjjkkk66}
\end{equation}
Taking $\delta$ appropriately small, and substituting the above two inequalities into \dref{vgbhnsxcdvfddfcz2.5xxdsddddddf1jjssdssddsddffjj}, we obtain
\begin{equation}
\begin{array}{rl}
&\disp{\frac{d}{dt}\|u \|^{k}_{L^k(\Omega)}+\int_{\Omega} u ^k{}}
\\
\leq&\disp{C_7 k^{\frac{N+2}{2}} \|  u \|_{L^{\frac{k}{2}}(\Omega)}^{\frac{k[2k+N(m-1)]}{(2k+2N(m-1))}}+C_8 k^{N+2}\|  u \|_{L^{\frac{k}{2}}(\Omega)}^{\frac{k[k+2(m-1)]}{(k+2N(m-1))}}+C_{9}k\|   u \|_{L^\frac{k}{2}(\Omega)}^{k}+C_{10}k^2\|   u \|_{L^\frac{k}{2}(\Omega)}^{k+1-m}.}\\
\end{array}
\label{vgbhnsxcdvfddfcssddz2.5xxdsddddddf1jjssdssddsddffjj}
\end{equation}
Notice that
$${\frac{k[k+2(m-1)]}{(k+2N(m-1))}}\leq\max\{\frac{k[2k+N(m-1)]}{(2k+2N(m-1))},k+1-m\}\leq k,$$
 we further obtain
 \begin{equation}
\begin{array}{rl}
\disp{\frac{d}{dt}\|u \|^{k}_{L^k(\Omega)}+\int_{\Omega} u ^k{}}
\leq&\disp{C_{11} k^{N+2}\|  u \|_{L^{\frac{k}{2}}(\Omega)}^{\frac{k[k+2(m-1)]}{(k+2N(m-1))}}+C_{12}k^{N+2}\|   u \|_{L^\frac{k}{2}(\Omega)}^{k}.}\\
\end{array}
\label{vgbhnsxcdvfddfcsssddsddz2.5xxdsddddddf1jjssdssddsddffjj}
\end{equation}
In what follows, we use Moser iteration method to show the $L^\infty$ estimate of $u $. Take $k_i=
2k_{i-1} = 2^ik_0 , k_0 = 2m$, $M_i =\sup_{t\in(0,T_{max})}\int_{\Omega}u ^{\frac{{k_i}}{2}}$, then when $m\geq 1$, we have
\begin{equation}
\begin{array}{rl}
\disp{M_i}\leq&\disp{\max\{\lambda^iM_{i-1}^2,\|1+u_0\|_{L^\infty(\Omega)}^{k_i}\}.}\\
\end{array}
\label{vgbhnsxcdvfddfcsssddsddz2.5xxdsdddddssdddf1jjssdssddsddffjj}
\end{equation}
with some $\lambda> 1.$
Now if $\lambda^iM_{i-1}^2\leq\|1+u_0\|_{L^\infty(\Omega)}^{k_i}$
for infinitely many $i\geq 1$, we get (3.33) with $C = \|1+u_0\|_{L^\infty(\Omega)}$.
Otherwise $M _i\leq \lambda^iM_{i-1}^2$
for all $i = 0, 1, \ldots$, so $\ln M_i \leq i \ln \lambda +2 \ln M_{i-1}$. By induction, we get
$$\ln M_i \leq (i+2)\ln \lambda +2^i(\ln M_{0}+2\ln \lambda)$$
and thus
$$M_i\leq \lambda^{i+2+2^i}M_0^{2^i}. $$
From this, it follows that
\dref{zjscz2.ssdd529ddff7xssddsddd9630xxy}  is valid with some positive constant.
When $0< m<1$, noticing that $u^k  (u +\varepsilon)^{1-m}\leq u ^{k+1-m}
+\varepsilon^{1-m} u ^{k}$, then by \dref{zjscz2.5297x9630111rrddtt} and $\varepsilon\in(0,1)$, we derive from Lemma \ref{lemdffmddffa45630} that
 \begin{equation}
\begin{array}{rl}
&\disp{\frac{d}{dt}\|u \|^{k}_{L^k(\Omega)}+\frac{k (k-1)  C_D }{2^{2-m}}\int_{u >\varepsilon} u ^{m+k-3}|\nabla u  |^2{}+\mu k\int_{\Omega} u ^{k+1}+\mu k\int_{\Omega} u ^{k}{}}
\\
\leq&\disp{C_{13}k^2\int_{\Omega} u ^{k}+C_{14}k^2\int_{\Omega} u ^{k+1-m}}\\
\leq&\disp{C_{13}k^2\int_{u \leq\varepsilon} u ^{k}+C_{13}k^2\int_{u >\varepsilon} u ^{k}+C_{14}k^2\int_{u \leq \varepsilon} u ^{k+1-m}+C_{14}k^2\int_{u >\varepsilon} u ^{k+1-m}}\\
\leq&\disp{C_{13}k^2\int_{u \leq \varepsilon} u ^{\frac{k}{2}}+C_{14}k^2\int_{u \geq\varepsilon} u ^{k}+C_{14}k^2\int_{u \geq \varepsilon} u ^{k+1-m}.}\\
\end{array}
\label{vgbhnsxcdvfddfcz2ssdd.5xxdsddddddf1jjssdssddsddffjj}
\end{equation}

Denote $J(t)=\{x\in  u >\varepsilon\}$. By virtue of the Gagliardo-Nirenberg interpolation inequality and \dref{zjscz2.5297x9630111rrddtt}, we obtain
$$\|u \|^k_{L^k(J(t))}\leq\|u \|_{L^3(J(t))}\|u \|^{k-1}_{L^{\frac{3(k-1)}{2}}(J(t))}\leq
C_{15}
\|u \|^{k-1}_{L^{\frac{3(k-1)}{2}}(J(t))}$$
and
$$\|u \|^{k+1-m}_{L^{k+1-m}(J(t))}\leq\|u \|_{L^{3(2-m)}(J(t))}^{2-m}\|u \|^{k-1}_{L^{\frac{3(k-1)}{2}}(J(t))}\leq
C_{16}
\|u \|^{k-1}_{L^{\frac{3(k-1)}{2}}(J(t))}.$$
Substituting the above two inequalities into \dref{vgbhnsxcdvfddfcz2.5xxdsddddddf1jjssdsddffjj}, we obtain
\begin{equation}
\begin{array}{rl}
&\disp{\frac{d}{dt}\|u \|^{k}_{L^k(\Omega)}+k (k-1)  C_D \int_{J(t)} (u +\varepsilon)^{m-1}u ^{k-2}|\nabla u  |^2{}+
\mu k\int_{\Omega} u ^{k+1}+\int_{\Omega} u ^k{}}
\\
\leq&\disp{C_{17}k^2\int_{\Omega} u ^{\frac{k}{2}}+C_{18}k^2\|u \|^{k-1}_{L^{\frac{3(k-1)}{2}}(J(t))}.}\\
\end{array}
\label{vgbhnsxcdvfddfczssdd2.5xxdsddddddf1jjssdsddffjj}
\end{equation}

Using the Gagliardo-Nirenberg interpolation inequality again, it follows
$$\begin{array}{rl}
&C_{18}k^2\|u \|^{k-1}_{L^{\frac{3(k-1)}{2}}(J(t))}\\
=&C_{18}k^2\|u ^{\frac{{k}+m-1}{2}}\|^{\frac{2(k-1)}{m+k-1}}_{L^{\frac{3(k-1)}{k+m-1}}(J(t))}\\
\leq&\disp{C_{19}k^2\|\nabla    u ^{\frac{{k}+m-1}{2}}\|_{L^2(J(t))}^{\frac{4(2k-N)}{(N+2){k}+2N(m-1)}}\|  u ^{\frac{{k}+m-1}{2}}\|_{L^\frac{k}{{k}+m-1}(\Omega)}^{\frac{2(k-1)}{m+k-1}-\frac{4(2k-N)}{(N+2){k}+2N(m-1)}}+C_{19}k^2\|   u ^{\frac{{k}+m-1}{2}}\|_{L^\frac{k}{{k}+m-1}(\Omega)}^{\frac{2(k-1)}{{k}+m-1}}}\\
\leq&\disp{\delta\|\nabla    u ^{\frac{{k}+m-1}{2}}\|_{L^2(J(t))}^{2}+C_{20} k^{\frac{2[(N+2){k}+2N(m-1)]}{(N-2)k+2Nm}}\|  u ^{\frac{{k}+m-1}{2}}\|_{L^\frac{k}{{k}+m-1}(\Omega)}^{\frac{2k[k(N-2)+(2N-4)m+2-N]}{(m+k-1)[(N-2){k}+2Nm]}}+C_{19}k^{2}\|   u \|_{L^\frac{k}{2}(\Omega)}^{k-1}}\\
\leq&\disp{\delta\|\nabla    u ^{\frac{{k}+m-1}{2}}\|_{L^2(J(t))}^{2}+C_{20} k^{\frac{2(N+2)}{(N-2)}}\|  u ^{\frac{{k}+m-1}{2}}\|_{L^\frac{k}{{k}+m-1}(\Omega)}^{\frac{2k[k(N-2)+(2N-4)m+2-N]}{(m+k-1)[(N-2){k}+2Nm]}}+C_{19}k^{2}\|   u \|_{L^\frac{k}{2}(\Omega)}^{k-1}}\\
=&\disp{\delta\|\nabla    u ^{\frac{{k}+m-1}{2}}\|_{L^2(J(t))}^{2}+C_{20} k^{\frac{2(N+2)}{(N-2)}}\|  u ^{\frac{{k}+m-1}{2}}\|_{L^\frac{k}{2}(\Omega)}^{\frac{k[k(N-2)+(2N-4)m+2-N]}{[(N-2){k}+2Nm]}}+C_{19}k^{2}\|   u \|_{L^\frac{k}{2}(\Omega)}^{k-1},}\\
\end{array}
$$
substituting this inequality into \dref{vgbhnsxcdvfddfczssdd2.5xxdsddddddf1jjssdsddffjj} gives
\begin{equation}
\begin{array}{rl}
&\disp{\frac{d}{dt}\|u \|^{k}_{L^k(\Omega)}+k (k-1)  C_D \int_{J(t)} (u +\varepsilon)^{m-1}u ^{k-2}|\nabla u  |^2{}+
\mu k\int_{\Omega} u ^{k+1}+\int_{\Omega} u ^k{}}
\\
\leq&\disp{C_{21}k^2\int_{\Omega} u ^{\frac{k}{2}}+C_{23} k^{\frac{2(N+2)}{(N-2)}}\|  u ^{\frac{{k}+m-1}{2}}\|_{L^\frac{k}{2}(\Omega)}^{\frac{k[k(N-2)+(2N-4)m+2-N]}{[(N-2){k}+2Nm]}}+C_{22}k^{2}\|   u \|_{L^\frac{k}{2}(\Omega)}^{k-1}.}\\
\end{array}
\label{vgbhnsxcdvfddfczssdd2.5xssdxdsddddddf1jjssdsddffjj}
\end{equation}
Similarly to the case $m\geq 1$, and we obtain \dref{zjscz2.ssdd529ddff7xssddsddd9630xxy}.

\end{proof}

By virtue of \dref{1.163072x} and Lemmas \ref{lemdffmddffa45630}--\ref{lemdffssddmddffa45630}, we are now in a position to prove Theorem \ref{theorem3}.

{\bf Proof of Theorem \ref{theorem3}.}
\begin{proof}
From \dref{1.163072x} and Lemmas \ref{lemdffmddffa45630}--\ref{lemdffssddmddffa45630}, we derive  that there
exists a constant $C > 0$ independent of $\varepsilon$ such that
\begin{equation}
\|u (\cdot,t)\|_{L^\infty(\Omega)} +\|v (\cdot,t)\|_{W^{1,\infty}(\Omega)}  \leq C ~~\mbox{for all}~~ t\in(0,T_{max}).
\label{zjscz2.5297x9630111kkuu}
\end{equation}
Suppose on the contrary that $T_{max}< \infty$, then \dref{zjscz2.5297x9630111kkuu} contradicts to the blow-up criterion \dref{1.163072x}, which
implies $T_{max}= \infty$. Therefore, the classical solution $(u,v,w)$ is global in time and bounded.
\end{proof}



\section{Proof of Theorem \ref{theossdrem3}}

The goal of this section is to prove theorem \ref{theossdrem3}. In the absence of \dref{91ssdd61}, the first equation in system \dref{7101.2x19x3sss189} may be degenerate, so system \dref{7101.2x19x3sss189} might not
have classical solutions. Our goal is to construct solutions of \dref{7101.2x19x3sss189} as limits of solutions to appropriately regularized problems.
To this end,
we approximate the diffusion coefficient function in \dref{7101.2x19x3sss189}
 by a family $(D_\varepsilon)_{\varepsilon\in(0,1)}$ of
functions
$$D_\varepsilon\in C^2((0,\infty))~ \mbox{such that}~~ D_\varepsilon(u)\geq\varepsilon
 ~\mbox{for all}~ u > 0 $$$$~~\mbox{and}~
D(u) \leq D_\varepsilon(u)\leq D(u)  + 2\varepsilon~ \mbox{for all}~
 u > 0 ~\mbox{and}~\varepsilon\in (0, 1).$$
  Therefore, for any $\varepsilon\in(0,1)$, the regularized problem of \dref{7101.2x19x3sss189} is presented as follows
%
\begin{equation}
 \left\{\begin{array}{ll}
  u_{\varepsilon t}=\nabla\cdot(D_\varepsilon(u_\varepsilon)\nabla u_\varepsilon)-\chi\nabla\cdot(u_\varepsilon\nabla v_\varepsilon)-\xi\nabla\cdot
  (u_\varepsilon\nabla w_\varepsilon)+\mu u_\varepsilon(1-u_\varepsilon-w_\varepsilon),\quad
x\in \Omega, t>0,\\
 \disp{v_{\varepsilon t}=\Delta v_\varepsilon +u_\varepsilon- v_\varepsilon},\quad
x\in \Omega, t>0,\\
\disp{w_{\varepsilon t}=- v_\varepsilon w_\varepsilon },\quad
x\in \Omega, t>0,\\
 \disp{\frac{\partial u_\varepsilon}{\partial \nu}=\frac{\partial v_\varepsilon}{\partial \nu}=\frac{\partial w_\varepsilon}{\partial \nu}=0},\quad
x\in \partial\Omega, t>0,\\
\disp{u_\varepsilon(x,0)=u_0(x)},v_\varepsilon(x,0)=v_0(x),w_\varepsilon(x,0)=w_0(x),\quad
x\in \Omega.\\
 \end{array}\right.\label{5555710ssdd1.2x19x3189}
\end{equation}
We are now in the position to construct global weak solutions for \dref{7101.2x19x3sss189}. Before going
into details, let us first give the definition of weak solution.

\begin{definition}\label{df1}
Let  $T\in(0,\infty]$, and  $\Omega\subset R^N$ be a bounded domain with smooth boundary.
A triple $(u, v, w)$ of nonnegative functions defined on  $\Omega\times(0,T)$, is called a weak solution to
\dref{7101.2x19x3sss189}  on $[0, T )$ if
%
%
\begin{equation}
\begin{array}{ll}
   \mbox{(i)}~~ u\in L_{loc}^2([0,T);L^2(\Omega)),~~
    v \in L_{loc}^2([0,T); W^{1,2}(\Omega)),~~
    w \in L_{loc}^2([0,T); W^{1,2}(\Omega));\\
\end{array}\label{dffff1.1fghyuisdakkklll}
\end{equation}
\begin{equation}\label{726291hh}
\begin{array}{rl}
 \mbox{(ii)}~~ H(u) \in L^1_{loc}(\bar{\Omega}\times [0, T)),~~u\nabla v~~\mbox{and}~~u\nabla w~~ \in
L^1_{loc}(\bar{\Omega}\times [0, T);\mathbb{R}^{N});
\end{array}
\end{equation}
(iii) $(u, v, w)$ satisfies \dref{7101.2x19x3sss189}  in the sense that for every  $\varphi\in C_0^{\infty} (\bar{\Omega}\times[0, T))$
\begin{equation}
\begin{array}{rl}\label{eqx45xx12112ccgghh}
&\disp{-\int_0^{T}\int_{\Omega}u\varphi_t-\int_{\Omega}u_0\varphi(\cdot,0) }
\\
=&\disp{
\int_0^T\int_{\Omega}H(u)\Delta\varphi+\chi\int_0^T\int_{\Omega}u
\nabla v\cdot\nabla\varphi+\xi\int_0^T\int_{\Omega}u
\nabla w\cdot\nabla\varphi}
+\disp{\int_0^T\int_{\Omega}(\mu u-uw-\mu u^2)\varphi;}
\end{array}
\end{equation}
holds as well as
\begin{equation}
\begin{array}{rl}\label{eqx45xx12112ccgghhjj}
&\disp{-\int_0^{T}\int_{\Omega}v\varphi_t-\int_{\Omega}v_0\varphi(\cdot,0)=-
\int_0^T\int_{\Omega}\nabla v\cdot\nabla\varphi-\int_0^T\int_{\Omega}v\varphi+\int_0^T\int_{\Omega}u\varphi}
\end{array}
\end{equation}
and
\begin{equation}
\begin{array}{rl}\label{eqx45xx1sddd2112ccgghhjj}
&\disp{-\int_0^{T}\int_{\Omega}w\varphi_t-\int_{\Omega}w_0\varphi(\cdot,0)=-
\int_0^T\int_{\Omega}vw\varphi,}
\end{array}
\end{equation}
where we let
\begin{equation}
H(s)=\int_0^sD(\sigma)d\sigma~~\mbox{for}~~s\geq0.
 \label{zjscz2.5297x963ssddd0111kkuu}
\end{equation}
In particular, if  $T=\infty$ can be taken, then $(u, v, w)$ is called a global-in-time weak solution
to \dref{7101.2x19x3sss189}.
%
%



We proceed to establish the main step towards the boundedness of weak solutions to  \dref{7101.2x19x3sss189}.
To this end, firstly, from \dref{1.163072x} and Lemmas \ref{lemdffmddffa45630}--\ref{lemdffssddmddffa45630},
 we can easily derive the following  estimates for $u_\varepsilon$ and $v_\varepsilon$, which plays an important role in proving Theorem \ref{theossdrem3}.
%
%
\begin{lemma}\label{lemma45630hhuujjuu}
Let $m> \frac{2N}{N+\gamma_*}$ with $N\geq1$,
 where $\gamma_*$ is given by \dref{xeerr1.731426677gg}.  Then one can find 
$C > 0$
independent of $\varepsilon\in(0, 1)$ such that
\begin{equation}
\|u_\varepsilon(\cdot,t)\|_{L^\infty(\Omega)}  \leq C ~~\mbox{for all}~~ t\in(0,\infty)
\label{zjscz2.5297x9630111kkuu}
\end{equation}
and
\begin{equation}
\|v_\varepsilon(\cdot,t)\|_{W^{1,\infty}(\Omega)}  \leq C ~~\mbox{for all}~~ t\in(0,\infty).
\label{zjscz2.5297x9630111kkhhii}
\end{equation}
\end{lemma}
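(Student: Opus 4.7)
The plan is to re-run the entire a priori estimate machinery from Section 3 for the regularized family $(u_\varepsilon,v_\varepsilon,w_\varepsilon)$, observing at every step that all constants can be chosen independently of $\varepsilon\in(0,1)$. First I would invoke the local existence theory analogous to Lemma \ref{lemma70} for the non-degenerate approximate problem \eqref{5555710ssdd1.2x19x3189}: since $D_\varepsilon\in C^2((0,\infty))$ with $D_\varepsilon\geq\varepsilon>0$, each regularized system admits, on a maximal interval $(0,T_{\max,\varepsilon})$, a classical solution triple $(u_\varepsilon,v_\varepsilon,w_\varepsilon)$ with $0\leq w_\varepsilon\leq\|w_0\|_{L^\infty(\Omega)}$ and the usual extensibility criterion.

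Next I would verify the $\varepsilon$-independence at each link of the chain in Section 3. The crucial structural point is that $D_\varepsilon$ satisfies the \emph{same} lower bound $D_\varepsilon(u)\geq D(u)\geq C_D u^{m-1}$ with the same constant $C_D$ from \eqref{9161}; consequently, when one tests the first equation of \eqref{5555710ssdd1.2x19x3189} with $u_\varepsilon^{k-1}$, the dissipative term satisfies
\[
\int_\Omega D_\varepsilon(u_\varepsilon)u_\varepsilon^{k-2}|\nabla u_\varepsilon|^2\,dx \;\geq\; C_D\int_\Omega u_\varepsilon^{m+k-3}|\nabla u_\varepsilon|^2\,dx,
\]
which is precisely the estimate used throughout Lemmas \ref{wsdelemma45}--\ref{lemdffssddmddffa45630}. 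Lemma \ref{lemm3a} is unaffected since the third equation in \eqref{5555710ssdd1.2x19x3189} is unchanged. Lemmas \ref{zsxxcdvlemma45630}, \ref{ssdeedrfe116lemma70hhjj}, \ref{lemmssdda45630}, \ref{qqqqlemma45630}, Corollary \ref{lemma456ssdddddfgg30} and Lemmas \ref{lemma45630}, \ref{pplemma45630}, \ref{lsssemma456302ssd23116} use only: the lower bound $C_D$, integration by parts, the maximum principle $0\leq w_\varepsilon\leq\|w_0\|_{L^\infty}$, the $L^p$--$L^q$ estimates for the Neumann heat semigroup (Lemma \ref{lemma45xy1222232}, whose constant $\lambda_0$ depends only on $\Omega,\gamma,N$), and Gagliardo--Nirenberg/fractional Sobolev inequalities with domain-dependent constants. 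Hence each such estimate produces bounds determined only by $\|u_0\|_{L^1}$, $\|v_0\|_{W^{1,\infty}}$, $\|w_0\|_{C^{2+\vartheta}}$, $C_D$, $\chi$, $\xi$, $\mu$, $\kappa$ and $\Omega,N$; none of these involves $\varepsilon$.

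Carrying this through Lemmas \ref{lemma456ssddfff30}, \ref{lemmaddff45ssdd630}, \ref{sddlemmddffa45630} yields $\varepsilon$-uniform control of $\|u_\varepsilon(\cdot,t)\|_{L^k(\Omega)}$ for all $k>1$, as long as $m>\frac{2N}{N+\gamma_*}$. Then Lemma \ref{lemdffmddffa45630}, which is based on Duhamel's formula $v_\varepsilon(t)=e^{t(\Delta-1)}v_0+\int_0^t e^{(t-s)(\Delta-1)}u_\varepsilon(s)\,ds$ together with standard $L^p$--$L^\infty$ smoothing estimates, yields $\varepsilon$-uniform bounds on $\|v_\varepsilon(\cdot,t)\|_{W^{1,\infty}(\Omega)}$, proving \eqref{zjscz2.5297x9630111kkhhii}. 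Finally the Moser iteration in Lemma \ref{lemdffssddmddffa45630} only relies on the lower bound $C_D u^{m-1}$ for the diffusion, on the already-established $L^k$ and $W^{1,\infty}$ estimates, and on domain-dependent interpolation constants, so it provides an $\varepsilon$-uniform bound $\|u_\varepsilon(\cdot,t)\|_{L^\infty(\Omega)}\leq C$, giving \eqref{zjscz2.5297x9630111kkuu}.

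The main technical point to watch, and the place that deserves real care rather than a one-line citation, is ensuring that the constants emerging from the Moser iteration (the numbers $\lambda$ and $M_0$ in the recursion $M_i\leq \max\{\lambda^i M_{i-1}^2,\|1+u_0\|_{L^\infty}^{k_i}\}$) do not secretly pick up $\varepsilon$-dependence through the harmless-looking upper estimate $D_\varepsilon\leq D+2\varepsilon\leq D+2$. Since only the \emph{lower} bound on $D_\varepsilon$ appears on the dissipative side of the tested identity, and the convective/source terms in \eqref{5555710ssdd1.2x19x3189} are identical to those in \eqref{7101.2x19x3sss189}, this dependence does not arise; all constants stay controlled by $C_D$ alone. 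The uniform bounds \eqref{zjscz2.5297x9630111kkuu}--\eqref{zjscz2.5297x9630111kkhhii} then force $T_{\max,\varepsilon}=+\infty$ via the blow-up criterion \eqref{1.163072x} applied to the approximate problem, completing the proof.
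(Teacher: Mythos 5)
Your proposal is correct and takes essentially the same approach as the paper: the paper's (unstated) justification for this lemma is precisely that the entire chain of a priori estimates in Section 3 uses only the lower bound $D_\varepsilon(u)\geq D(u)\geq C_D u^{m-1}$, the maximum principle for $w_\varepsilon$, and $\varepsilon$-independent semigroup and interpolation constants, so the bounds carry over unchanged to the regularized system. Your observation that the upper bound $D_\varepsilon\leq D+2\varepsilon$ never enters the dissipative estimates is exactly the point the paper leaves implicit, and it correctly closes the argument.
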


Now with the above boundedness
information at hand, we may invoke standard parabolic regularity to obtain the H\"{o}lder regularity properties.

\begin{lemma}\label{lemma45630hhuujjuuyy}
Let $m> \frac{2N}{N+\gamma_*}$ with $N\geq1$,
 where $\gamma_*$ is given by \dref{xeerr1.731426677gg}.
Then for any $\varepsilon\in(0, 1)$, one can find $\mu\in(0, 1)$ such that for some $C > 0$
%
%
\begin{equation}
\|v_\varepsilon(\cdot,t)\|_{C^{\mu,\frac{\mu}{2}}(\Omega\times[t,t+1])}  \leq C ~~\mbox{for all}~~ t\in(0,\infty),
\label{zjscz2.5297x9630111kkhhiioo}
\end{equation}
and such that for any $\tau> 0$
 there exists $C(\tau) > 0$ fulfilling
\begin{equation}
\|\nabla v_\varepsilon(\cdot,t)\|_{C^{\mu,\frac{\mu}{2}}(\Omega\times[t,t+1])} \leq C ~~\mbox{for all}~~ t\in(\tau,\infty).
\label{zjscz2.5297x9630111kkhhffrreerrpphh}
\end{equation}
\end{lemma}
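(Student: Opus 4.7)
The plan is to view the second equation in the regularized system \dref{5555710ssdd1.2x19x3189} as an inhomogeneous linear parabolic Neumann problem
$$v_{\varepsilon t}-\Delta v_\varepsilon+v_\varepsilon=u_\varepsilon\quad\text{in }\Omega\times(0,\infty),\qquad \frac{\partial v_\varepsilon}{\partial\nu}=0,$$
and feed into it the uniform-in-$\varepsilon$ bounds already provided by Lemma \ref{lemma45630hhuujjuu}. That lemma guarantees $\|u_\varepsilon(\cdot,t)\|_{L^\infty(\Omega)}\leq C$ and $\|v_\varepsilon(\cdot,t)\|_{W^{1,\infty}(\Omega)}\leq C$ for all $t>0$, so the forcing $f_\varepsilon:=u_\varepsilon-v_\varepsilon$ is uniformly bounded in $L^\infty(\Omega\times(0,\infty))$ independently of $\varepsilon$.

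For \dref{zjscz2.5297x9630111kkhhiioo}, I would apply standard interior/boundary parabolic Hölder estimates for linear equations with bounded right-hand side under homogeneous Neumann conditions (e.g. Ladyzhenskaya--Solonnikov--Ural'ceva, Theorem III.10.1, or Lieberman). Since $v_0\in W^{1,\infty}(\Omega)\hookrightarrow C^{\mu_0}(\bar\Omega)$ for every $\mu_0\in(0,1)$, this yields some $\mu\in(0,1)$ and a constant $C>0$, both independent of $\varepsilon$ and $t$, such that $\|v_\varepsilon\|_{C^{\mu,\mu/2}(\bar\Omega\times[t,t+1])}\leq C$ on every unit time cylinder.

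For the stronger gradient estimate \dref{zjscz2.5297x9630111kkhhffrreerrpphh} I would perform a bootstrap based on maximal $L^p$ regularity. Fixing $\tau>0$, the variation-of-constants formula reads
$$v_\varepsilon(t)=e^{(t-\tau/2)(\Delta-1)}v_\varepsilon(\tau/2)+\int_{\tau/2}^{t}e^{(t-s)(\Delta-1)}u_\varepsilon(s)\,ds,\qquad t>\tau/2.$$
Since $v_\varepsilon(\tau/2)\in W^{2,p}(\Omega)$ uniformly in $\varepsilon$ (by Lemma \ref{lemma45xy1222232} applied on a small initial layer, using $u_\varepsilon\in L^\infty$ and $v_0\in W^{1,\infty}$) and $u_\varepsilon\in L^\infty$ uniformly in $\varepsilon$, Lemma \ref{lemma45xy1222232} gives, for each $p\in(1,\infty)$,
$$\|v_\varepsilon\|_{L^p((\tau/2,T);W^{2,p}(\Omega))}+\|v_{\varepsilon t}\|_{L^p((\tau/2,T);L^p(\Omega))}\leq C(\tau,p,T),$$
with $C(\tau,p,T)$ independent of $\varepsilon$. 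Choosing $p$ large enough so that the parabolic Sobolev embedding $W^{2,1}_p(\Omega\times(\tau/2,T))\hookrightarrow C^{1+\mu,(1+\mu)/2}(\bar\Omega\times[\tau,T])$ applies (by the Aubin--Lions type embeddings, valid once $p>N+2$), one obtains the desired bound $\|\nabla v_\varepsilon\|_{C^{\mu,\mu/2}(\bar\Omega\times[t,t+1])}\leq C(\tau)$ uniformly in $\varepsilon$ and $t\geq\tau$.

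The main technical point is to keep all constants independent of $\varepsilon$; this is automatic because the regularization enters only through $D_\varepsilon$ in the first equation, while the $v$-equation is $\varepsilon$-independent except through the coupling term $u_\varepsilon$, which is already controlled by Lemma \ref{lemma45630hhuujjuu}. The restriction $t>\tau$ in the second statement reflects the fact that only $v_0\in W^{1,\infty}$ is assumed: this regularity falls short of the $C^{1+\mu}$ needed to extend the gradient Hölder estimate up to the initial time, but becomes immaterial after the parabolic smoothing kicks in over an arbitrarily short interval.
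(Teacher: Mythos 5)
Your proposal follows exactly the same path as the paper: observe that the forcing $g_\varepsilon:=u_\varepsilon-v_\varepsilon$ in the (linear, $\varepsilon$-independent) $v$-equation is uniformly bounded by Lemma~\ref{lemma45630hhuujjuu}, and then invoke standard parabolic regularity theory for the Neumann problem — which is precisely what the paper does, though with essentially no further detail beyond the phrase ``standard parabolic regularity theory.'' Your spelling-out of the two ingredients (interior/boundary parabolic H\"older estimates for \dref{zjscz2.5297x9630111kkhhiioo}, and a smoothing/bootstrap argument away from $t=0$ for \dref{zjscz2.5297x9630111kkhhffrreerrpphh}) is a faithful elaboration of the same plan. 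One small imprecision worth flagging: you claim $v_\varepsilon(\tau/2)\in W^{2,p}(\Omega)$ uniformly in $\varepsilon$ ``by Lemma~\ref{lemma45xy1222232} applied on a small initial layer,'' but that lemma produces $L^p_t W^{2,p}_x$ control and itself requires $W^{2,\gamma}$ initial data, so it cannot be invoked directly from $v_0\in W^{1,\infty}$ nor does it immediately give a pointwise-in-time $W^{2,p}$ bound. The cleaner way to get the starting regularity at $t=\tau/2$ is the semigroup smoothing ($L^p$--$L^q$) estimates already used in Lemma~\ref{lemdffmddffa45630} — e.g. $\|v_\varepsilon(\tau/2)\|_{W^{2,p}}\lesssim (\tau/2)^{-1}\|v_\varepsilon(\tau/4)\|_{L^p}+\sup_s\|u_\varepsilon(s)\|_{L^p}$ — after which your maximal-regularity bootstrap and parabolic Sobolev embedding go through as stated.
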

\begin{proof}
Firstly, by Lemma  \ref{lemma45630hhuujjuu}, we derive that $ g_\varepsilon $ is bounded in $L^{\infty} (\Omega\times(0, \infty))$, where $g_\varepsilon (x, t) := -v_\varepsilon(x,t) +u_\varepsilon(x,t) $ for all $(x,t)\in \Omega\times(0, \infty)$.
Therefore, in view of the standard parabolic regularity theory  to the second equation  of  \dref{5555710ssdd1.2x19x3189}, one has \dref{zjscz2.5297x9630111kkhhiioo} and \dref{zjscz2.5297x9630111kkhhffrreerrpphh} hold.
%
\end{proof}



To achieve the convergence result, 
%
%
 we need to derive some
regularity properties of time derivatives.
%
%
\begin{lemma}\label{lemma45630hhuujjuuyytt}
Let $m> \frac{2N}{N+\gamma_*}$ with $N\geq1$,
 where $\gamma_*$ is given by \dref{xeerr1.731426677gg}.
%
%
%
%
%
Moreover,
let $\varsigma> m$ and $\varsigma\geq 2(m - 1)$. Then
for all $T > 0$ and $\varepsilon\in(0,1)$, there exists $C(T) > 0$ such that
\begin{equation}
\int_0^T\|\partial_tu_\varepsilon^\varsigma(\cdot,t)\|_{(W^{2,q}(\Omega))^*}dt  \leq C(T)
\label{zjscz2.5297x9630111kkhhiioott4}
\end{equation}
and
\begin{equation}
\|w_\varepsilon(\cdot,t)\|_{W^{1,\infty}(\Omega)}  \leq C(T).
\label{zjsczsdddfff2.5297x9630111kkhhii}
\end{equation}
\end{lemma}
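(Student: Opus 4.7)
The plan is to treat the two estimates separately, starting with the simpler bound on $w_\varepsilon$. Since the third equation of \dref{5555710ssdd1.2x19x3189} is an ODE in $t$ with $x$ entering only as a parameter, integrating yields the explicit representation $w_\varepsilon(x,t)=w_0(x)\exp\!\left(-\int_0^t v_\varepsilon(x,s)\,ds\right)$, so that differentiation in $x$ gives
\begin{equation*}
\nabla w_\varepsilon(x,t)=\exp\!\left(-\int_0^t v_\varepsilon(x,s)\,ds\right)\left[\nabla w_0(x)-w_0(x)\int_0^t \nabla v_\varepsilon(x,s)\,ds\right].
\end{equation*}
Combining $w_0\in C^{2+\vartheta}(\bar\Omega)$ from \dref{x1.731426677gg} with the $\varepsilon$-independent $L^\infty$-bounds on $v_\varepsilon$ and $\nabla v_\varepsilon$ provided by Lemma \ref{lemma45630hhuujjuu}, I immediately obtain \dref{zjsczsdddfff2.5297x9630111kkhhii} with a constant depending linearly on $T$.

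For the bound on $\partial_t u_\varepsilon^\varsigma$, I would test the identity $(u_\varepsilon^\varsigma)_t=\varsigma u_\varepsilon^{\varsigma-1}u_{\varepsilon t}$ against an arbitrary $\varphi\in W^{2,q}(\Omega)$ with $q>N$, since the Sobolev embedding $W^{2,q}(\Omega)\hookrightarrow W^{1,\infty}(\Omega)$ then gives $\|\varphi\|_{L^\infty}+\|\nabla\varphi\|_{L^\infty}\le c\|\varphi\|_{W^{2,q}}$. Substituting the first equation of \dref{5555710ssdd1.2x19x3189} and integrating the three divergence terms by parts (the Neumann boundary conditions for $u_\varepsilon,v_\varepsilon,w_\varepsilon$ annihilating all boundary contributions even though $\varphi$ itself has no boundary condition), the right-hand side of $\int_\Omega(u_\varepsilon^\varsigma)_t\varphi$ becomes a sum of integrals of three types: (a) gradient-times-$\nabla\varphi$ terms such as $\int u_\varepsilon^{\varsigma-1}D_\varepsilon\nabla u_\varepsilon\cdot\nabla\varphi$ and $\int u_\varepsilon^\varsigma\nabla v_\varepsilon\cdot\nabla\varphi$; (b) quadratic-in-$\nabla u_\varepsilon$ terms such as $\int u_\varepsilon^{\varsigma-2}D_\varepsilon|\nabla u_\varepsilon|^2\varphi$ and $\int u_\varepsilon^{\varsigma-1}\nabla u_\varepsilon\cdot\nabla v_\varepsilon\,\varphi$ (likewise for $w_\varepsilon$); and (c) the lower-order logistic contribution $\int u_\varepsilon^\varsigma(1-u_\varepsilon-w_\varepsilon)\varphi$.

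Each term would then be estimated by three ingredients: (i) the uniform $L^\infty$-bounds on $u_\varepsilon$, $v_\varepsilon$, $\nabla v_\varepsilon$ (Lemma \ref{lemma45630hhuujjuu}) and on $w_\varepsilon$, $\nabla w_\varepsilon$ (just shown), together with the continuity of $D$ on the range of $u_\varepsilon$, which makes $D_\varepsilon(u_\varepsilon)\le C$; (ii) the Cauchy--Schwarz inequality applied with splittings such as $u_\varepsilon^{\varsigma-1}|\nabla u_\varepsilon|=u_\varepsilon^{\,\varsigma-1-(m+k-3)/2}\cdot u_\varepsilon^{(m+k-3)/2}|\nabla u_\varepsilon|$; and (iii) the $\varepsilon$-independent bound
\begin{equation*}
\int_0^T\!\!\int_\Omega u_\varepsilon^{m+k-3}|\nabla u_\varepsilon|^2\,dx\,dt\le C(T),\qquad \text{valid for every }k>1,
\end{equation*}
which results from integrating in time the differential inequality of Lemma \ref{lemmssdda45630} and invoking the $L^k$-bound of Lemma \ref{sddlemmddffa45630} applied to the approximate solution. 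Dividing by $\|\varphi\|_{W^{2,q}}$, taking the supremum over such $\varphi$, and then integrating over $(0,T)$ yields \dref{zjscz2.5297x9630111kkhhiioott4}.

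The main obstacle will be the book-keeping of exponents: for each term one must pick a $k>1$ so that the residual power of $u_\varepsilon$ after Cauchy--Schwarz, namely $\varsigma-2-(m+k-3)=\varsigma-m-k+1$ in the quadratic diffusion term and $2(\varsigma-1)-(m+k-3)=2\varsigma-m-k+1$ in the gradient cross-terms, remains non-negative. The assumption $\varsigma>m$ makes the choice $k=\varsigma-m+1>1$ admissible for the former, while the assumption $\varsigma\ge 2(m-1)$ plays the analogous role for the latter in the regime $m>2$; once each $k$ has been chosen compatibly, the boundedness of $u_\varepsilon$ absorbs the residual factor and the estimate closes with only routine applications of Hölder's and the Young inequality.
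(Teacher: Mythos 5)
Your proposal is correct and follows essentially the same route as the paper: the $w_\varepsilon$-estimate via the explicit solution $w_\varepsilon=w_0\exp(-\int_0^t v_\varepsilon)$ and its gradient, and the $\partial_t u_\varepsilon^\varsigma$-estimate by testing, integrating by parts, invoking the uniform $L^\infty$-bounds plus the space--time estimate $\int_0^T\int_\Omega u_\varepsilon^{\varsigma-2}|\nabla u_\varepsilon|^2\le C(T)$ (obtained from Lemma \ref{lemmssdda45630} with $p=\varsigma-m+1$), and the embedding $W^{2,q}(\Omega)\hookrightarrow W^{1,\infty}(\Omega)$. The paper tests against $\psi\in C_0^\infty(\Omega)$ while you test directly against $\varphi\in W^{2,q}(\Omega)$ with the boundary terms killed by the Neumann data, which is a cleaner way to land in $(W^{2,q}(\Omega))^*$ but not a different argument. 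The only minor misattribution is in your final paragraph: with the single choice $k=\varsigma-m+1$ \emph{both} the diffusion term and the gradient cross-terms close (the latter via Young, not an independent $k$), and the hypothesis $\varsigma\ge 2(m-1)$ is actually used to keep the exponent $p+1-m=\varsigma-2(m-1)$ in the term $\int u_\varepsilon^{p+1-m}|\nabla v_\varepsilon|^2$ of Lemma \ref{lemmssdda45630} nonnegative, not to control a separate exponent in the cross-terms.
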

\begin{proof}
 Firstly, with the help of  Lemma \ref{lemma45630hhuujjuu}, for all $\varepsilon\in(0,1),$ we can fix a positive constants $C_1$ such that
 \begin{equation}
u_\varepsilon\leq C_1~~\mbox{and}~~|\nabla v_\varepsilon|  \leq C_1 ~~\mbox{in}~~ \Omega\times(0,\infty).
\label{gbhnzjscz2.5297x9630111kkhhiioo}
\end{equation}

Next, we will prove \dref{zjscz2.5297x9630111kkhhiioott4}. To this end,
for any fixed   $\psi\in C_0^{\infty}(\Omega)$, multiplying the first equation by $u^{\varsigma-1}_{\varepsilon}\psi$, we have
  \begin{equation}
\begin{array}{rl}
&\disp\frac{1}{\varsigma}\int_{\Omega}\partial_{t}u^{\varsigma}_{\varepsilon}(\cdot,t)\cdot\psi\\ =&\disp{\int_{\Omega}u^{\varsigma-1}_{\varepsilon}\left[\nabla\cdot(D_\varepsilon(u_\varepsilon)\nabla u_\varepsilon)-\chi\nabla\cdot(u_\varepsilon\nabla v_\varepsilon)-\xi\nabla\cdot(u_\varepsilon\nabla w_\varepsilon)+\mu u_\varepsilon(1-w_\varepsilon-u_\varepsilon)\right]\cdot\psi }
\\
=&\disp{-(\varsigma-1)\int_\Omega u_{\varepsilon}^{\varsigma-2}D_\varepsilon(u_\varepsilon)|{\nabla} {n}_{\varepsilon}|^2\psi-\int_\Omega u_{\varepsilon}^{\varsigma-1}D_\varepsilon(u_\varepsilon){\nabla} {u}_{\varepsilon}\cdot\nabla\psi }\\
&+\disp{(\varsigma-1)\chi\int_\Omega u_{\varepsilon}^{\varsigma-1}\nabla u_{\varepsilon}\cdot\nabla v_{\varepsilon}\psi+\chi\int_\Omega u_{\varepsilon}^{\varsigma} \nabla v_{\varepsilon}\cdot\nabla\psi}\\
&+\disp{(\varsigma-1)\xi\int_\Omega u_{\varepsilon}^{\varsigma-1}\nabla u_{\varepsilon}\cdot\nabla w_{\varepsilon}\psi+\xi\int_\Omega u_{\varepsilon}^{\varsigma} \nabla w_{\varepsilon}\cdot\nabla\psi}\\
&+\disp{\mu\int_\Omega u_{\varepsilon}^{\varsigma} \psi-\mu\int_\Omega u_{\varepsilon}^{\varsigma}w_{\varepsilon} \psi-\mu\int_\Omega u_{\varepsilon}^{\varsigma+1} \psi~~\mbox{for all}~~t\in(0,\infty).}\\
\end{array}
\label{cvbmdcfvgcz2.5ghju48}
\end{equation}
Next, we will estimate the right-hand sides of \dref{cvbmdcfvgcz2.5ghju48}.
To this end,
assuming that $p := \varsigma-m + 1$, then $\varsigma > m$ and $\varsigma\geq 2(m -1)$ yield to
$p > 1$ and $p\geq m- 1$. Since \dref{gbhnzjscz2.5297x9630111kkhhiioo}, we integrate \dref{cz2.5xx1jjjj} with respect to $t$ over $(0, T)$ for some fixed $T > 0$ and then have
  \begin{equation}
\begin{array}{rl}
&\disp{\frac{1}{{p}}\int_{\Omega}u_{\varepsilon}^{{{p}}}(\cdot,T)+
\frac{C_D(p-1)}{2}\int_{0}^T\int_{\Omega}u_{\varepsilon}^{m+p-3} |\nabla u_{\varepsilon}|^2}\\
 \leq&\disp{\frac{(p-1)\chi^2}{2C_D}\int_{0}^T\int_\Omega u_{\varepsilon}^{p+1-m}|\nabla v_{\varepsilon}|^2+\frac{({k-1})}{k}\xi\|w_0\|_{L^\infty(\Omega)}\int_\Omega  u_\varepsilon^{k}v_\varepsilon}\\
 &\disp{+(\mu+\kappa\xi)\int_{\Omega} u_\varepsilon^{k} +\mu
 C_1^pT+\frac{1}{{p}}\int_{\Omega}n_{0}^{{{p}}} }\\
 \leq&\disp{\frac{(p-1)\chi^2}{2C_D}C_1^{p+3-m}T +\mu
 C_1^pT+\frac{1}{{p}}\int_{\Omega}n_{0}^{{{p}}} .}\\
\end{array}
\label{vbhnjmkvgcz2.5ghhjuyuiihjj}
\end{equation}
 On the other hand, by $p = \varsigma-m + 1$, we have
  \begin{equation}
\int_{0}^T\int_{\Omega}u_{\varepsilon}^{\varsigma-2} |\nabla u_{\varepsilon}|^2=\int_{0}^T\int_{\Omega}u_{\varepsilon}^{m+p-3} |\nabla u_{\varepsilon}|^2\leq C_2(1+T)
 \label{fgbhvbhnjmkvgcz2.5ghhjuyuiihjj}
\end{equation}
for some positive constant $C_2$.
Next, by \dref{gbhnzjscz2.5297x9630111kkhhiioo}, we    also derive  that
 \begin{equation}
\begin{array}{rl}
&\disp{\mu\int_\Omega u_{\varepsilon}^{\varsigma} \psi-\mu\int_\Omega u_{\varepsilon}^{\varsigma}w_{\varepsilon} \psi-\mu\int_\Omega u_{\varepsilon}^{\varsigma+1}\psi \leq C_1^\varsigma|\Omega|(\mu+\mu\|w_0\|_{L^\infty(\Omega)}+\mu C_1)\|\psi\|_{L^\infty(\Omega)}}\\
\end{array}
\label{fbgnjvbhnjmkvgcz2.5ghhjuyuiihjj}
\end{equation}\
for all $\varepsilon\in(0, 1).$
Moreover, by \dref{gbhnzjscz2.5297x9630111kkhhiioo}--\dref{fbgnjvbhnjmkvgcz2.5ghhjuyuiihjj} and  the Young inequality, we conclude that there exists $C_3 > 0$ such that
 \begin{equation}
\begin{array}{rl}
\disp{|\int_{\Omega}\partial_{t}u^{\varsigma}_{\varepsilon}(\cdot,t)\cdot\psi|\leq C_3(
\int_{\Omega}u_{\varepsilon}^{\varsigma-2} |\nabla u_{\varepsilon}|^2+1)\|\psi\|_{W^{1,\infty}(\Omega)}.}\\
\end{array}
\label{fghncvbmdcfvgcz2.5ghju48}
\end{equation}
Due to the embedding $W^{2,q}(\Omega)\hookrightarrow W^{1,\infty}(\Omega)$ for $q > N$, we deduce 
that there exists
$C_4 > 0$ such that
\begin{equation}
\begin{array}{rl}
\disp{\|\partial_tu_{\varepsilon}^\varsigma(\cdot,t)\|_{(W^{2,q}(\Omega))^*}\leq C_4(
\int_{\Omega}u_{\varepsilon}^{\varsigma-2} |\nabla u_{\varepsilon}|^2+1)~~\mbox{for all}~~t\in(0,\infty)~~\mbox{and any}~~\varepsilon\in(0,1).}\\
\end{array}
\label{ggbhhfghncvbmdcfvgcz2.5ghju48}
\end{equation}
Now, combining   \dref{fgbhvbhnjmkvgcz2.5ghhjuyuiihjj} and \dref{ggbhhfghncvbmdcfvgcz2.5ghju48}, we can get \dref{zjscz2.5297x9630111kkhhiioott4}.
Now, observing that
the third equation of  \dref{5555710ssdd1.2x19x3189} is an ODE,  we derive that for any $(x,t)\in\Omega\times(0,\infty),$
\begin{equation}
\begin{array}{rl}
&\disp{w_\varepsilon(x,t)=w_0(x)e^{-\int_0^t v_\varepsilon(x,s)ds}.}\\
\end{array}
\label{vbgncz2.5xx1ffsskkoppgghh512}
\end{equation}
Hence,  by a basic
calculation, we conclude that for any $(x,t)\in\Omega\times(0,\infty),$
\begin{equation}
\begin{array}{rl}
&\disp{\nabla w_\varepsilon(x,t)=\nabla w_0(x)e^{-\int_0^t v_\varepsilon(x,s)ds}-w_0(x)e^{-\int_0^t v_\varepsilon(x,s)ds}\int_0^t  \nabla v_\varepsilon(x,s)ds,}\\
\end{array}
\label{vbgncz2.5xx1ffsskkopffggpgghh512}
\end{equation}
which together with \dref{x1.731426677gg} and \dref{gbhnzjscz2.5297x9630111kkhhiioo} implies \dref{zjsczsdddfff2.5297x9630111kkhhii}.
\end{proof}
\end{definition}
%

We are now in the position to prove our main result on global weak solvability.
\begin{lemma}\label{lemma45630223}
Assume that   $m> \frac{2N}{N+\gamma_*}$ with $N\geq1$,
 where $\gamma_*$ is given by \dref{xeerr1.731426677gg}.
 Then there exists $(\varepsilon_j)_{j\in \mathbb{N}}\subset (0, 1)$ such that $\varepsilon_j\rightarrow 0$ as $j\rightarrow\infty$ and that
\begin{equation} u_{\varepsilon}\rightarrow u ~~\mbox{a.e.}~~ \mbox{in}~~ \Omega\times (0,\infty),
\label{zjscz2.5297x9630222222}
\end{equation}
\begin{equation}
 u_{\varepsilon}\rightharpoonup u  ~~\mbox{weakly star in}~~ L^\infty(\Omega\times(0,\infty)),
 \label{zjscz2.5297x9630222222ee}
\end{equation}
\begin{equation}
v_\varepsilon\rightarrow v ~~\mbox{in}~~ C^0_{loc}(\bar{\Omega}\times[0,\infty)),
 \label{zjscz2.5297x96302222tt3}
\end{equation}
\begin{equation}
\nabla v_\varepsilon\rightarrow \nabla v ~~\mbox{in}~~ C^0_{loc}(\bar{\Omega}\times[0,\infty)),
 \label{zjscz2.5297x96302222tt4}
\end{equation}
\begin{equation}
\nabla v_\varepsilon\rightarrow \nabla v ~~\mbox{in}~~ L^{\infty}(\Omega\times(0,\infty))
 \label{zjscz2.5297x96302222tt4}
\end{equation}
as well as
\begin{equation}
 w_\varepsilon\rightharpoonup  w ~~\mbox{weakly star in}~~ L^{\infty}(\Omega\times(0,\infty))
 \label{zjscz2.5297x96302ssdd222tt4}
\end{equation}
and
\begin{equation}
\nabla w_\varepsilon\rightharpoonup \nabla w ~~\mbox{weakly star in}~~ L^{\infty}_{loc}(\Omega\times(0,\infty))
 \label{zjscz2.5297x96302ssdd22ssdd2tt4}
\end{equation}
 with some triple  $(u, v,w)$ which is a global weak solution of  \dref{5555710ssdd1.2x19x3189} in the sense of Definition \ref{df1}. 
\end{lemma}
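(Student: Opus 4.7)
The plan is to extract convergent subsequences from $(u_\varepsilon,v_\varepsilon,w_\varepsilon)$ using the uniform bounds proved in Lemmas \ref{lemma45630hhuujjuu}--\ref{lemma45630hhuujjuuyytt}, and then pass to the limit in the weak formulation given in Definition \ref{df1}. First, from \dref{zjscz2.5297x9630111kkuu} and \dref{zjscz2.5297x9630111kkhhii} together with \dref{zjsczsdddfff2.5297x9630111kkhhii}, the sequences $(u_\varepsilon)$, $(v_\varepsilon)$, $(\nabla v_\varepsilon)$, $(w_\varepsilon)$ are bounded in $L^\infty(\Omega\times(0,\infty))$ and $(\nabla w_\varepsilon)$ is bounded in $L^\infty_{\rm loc}(\bar\Omega\times[0,\infty))$, so Banach--Alaoglu yields along a subsequence $\varepsilon=\varepsilon_j\downarrow 0$ the weak-$\star$ convergences \dref{zjscz2.5297x9630222222ee}, \dref{zjscz2.5297x96302ssdd222tt4} and \dref{zjscz2.5297x96302ssdd22ssdd2tt4}. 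The H\"older estimates \dref{zjscz2.5297x9630111kkhhiioo}--\dref{zjscz2.5297x9630111kkhhffrreerrpphh} from Lemma \ref{lemma45630hhuujjuuyy}, combined with the Arzel\`a--Ascoli theorem and a diagonal extraction over an exhaustion of $\Omega\times[0,\infty)$, then produce the local uniform convergences \dref{zjscz2.5297x96302222tt3}--\dref{zjscz2.5297x96302222tt4} for $v_\varepsilon$ and $\nabla v_\varepsilon$.

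The main obstacle is to upgrade the weak convergence of $u_\varepsilon$ to the a.e.\ convergence \dref{zjscz2.5297x9630222222}, since in the degenerate case the natural spatial regularity is only on $u_\varepsilon^\varsigma$ for suitable $\varsigma$, not on $u_\varepsilon$ itself. I would proceed via Aubin--Lions applied to $u_\varepsilon^\varsigma$: choose $\varsigma>m$ with $\varsigma\geq 2(m-1)$, set $p=\varsigma-m+1$, and use the energy inequality \dref{vbhnjmkvgcz2.5ghhjuyuiihjj} appearing in the proof of Lemma \ref{lemma45630hhuujjuuyytt} to conclude that
\begin{equation*}
\int_0^T\!\!\int_\Omega u_\varepsilon^{\varsigma-2}|\nabla u_\varepsilon|^2\,dx\,dt\leq C(T).
\end{equation*}
Since $|\nabla u_\varepsilon^{\varsigma/2}|^2=\tfrac{\varsigma^2}{4}u_\varepsilon^{\varsigma-2}|\nabla u_\varepsilon|^2$ and $u_\varepsilon$ is uniformly bounded, the chain rule $\nabla u_\varepsilon^\varsigma=2u_\varepsilon^{\varsigma/2}\nabla u_\varepsilon^{\varsigma/2}$ gives $(u_\varepsilon^\varsigma)$ bounded in $L^2(0,T;W^{1,2}(\Omega))$, while \dref{zjscz2.5297x9630111kkhhiioott4} gives $(\partial_t u_\varepsilon^\varsigma)$ bounded in $L^1(0,T;(W^{2,q}(\Omega))^\ast)$ for $q>N$. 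The Aubin--Lions lemma, applied with the compact embedding $W^{1,2}(\Omega)\hookrightarrow\hookrightarrow L^2(\Omega)\hookrightarrow (W^{2,q}(\Omega))^\ast$, then yields strong convergence of $u_\varepsilon^\varsigma$ in $L^2(\Omega\times(0,T))$ and hence, up to a further subsequence, a.e.\ convergence of $u_\varepsilon^\varsigma$, which upgrades to the a.e.\ convergence of $u_\varepsilon$ itself by continuity of $s\mapsto s^{1/\varsigma}$ on $[0,\infty)$. A Cantor diagonal procedure in $T\uparrow\infty$ fixes a single subsequence valid on $\Omega\times(0,\infty)$.

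With these convergences in hand, passing to the limit in the regularized weak identities is mostly routine. In the $v$-equation the linear terms pass by \dref{zjscz2.5297x96302222tt3}--\dref{zjscz2.5297x96302222tt4} and the weak-$\star$ limit of $u_\varepsilon$; in the $w$-equation the product $v_\varepsilon w_\varepsilon$ converges to $vw$ in $L^1_{\rm loc}$ because $v_\varepsilon\to v$ uniformly locally and $w_\varepsilon\rightharpoonup w$ weakly-$\star$. The non-trivial term is $H_\varepsilon(u_\varepsilon)\Delta\varphi$ in the weak formulation \dref{eqx45xx12112ccgghh}: using the construction of $D_\varepsilon$, one has $|H_\varepsilon(s)-H(s)|\leq 2\varepsilon s$ for all $s\geq 0$, so the combination of the uniform $L^\infty$ bound on $u_\varepsilon$ with the a.e.\ convergence from Step 2 and the dominated convergence theorem yields $H_\varepsilon(u_\varepsilon)\to H(u)$ in $L^p_{\rm loc}$ for every $p<\infty$. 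For the cross terms $u_\varepsilon\nabla v_\varepsilon$ and $u_\varepsilon\nabla w_\varepsilon$ I would combine the weak-$\star$ convergence of $u_\varepsilon$ with the strong (resp.\ locally uniform) convergence of $\nabla v_\varepsilon$ and the weak-$\star$ convergence of $\nabla w_\varepsilon$, noting that $u_\varepsilon\to u$ strongly in $L^q_{\rm loc}$ for every $q<\infty$ by a.e.\ convergence plus the uniform $L^\infty$ bound. The logistic term $\mu u_\varepsilon(1-u_\varepsilon-w_\varepsilon)$ converges by dominated convergence using a.e.\ convergence of all three factors. Finally, the identification $w(x,t)=w_0(x)\exp\{-\int_0^t v(x,s)\,ds\}$ follows by passing to the limit in the explicit representation \dref{vbgncz2.5xx1ffsskkoppgghh512}, which together with the already established convergence of $v_\varepsilon$ shows that the limit $(u,v,w)$ satisfies \dref{eqx45xx1sddd2112ccgghhjj} and the regularity requirements \dref{dffff1.1fghyuisdakkklll}--\dref{726291hh} of Definition \ref{df1}.
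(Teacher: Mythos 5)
Your proof is correct and follows essentially the same route as the paper: extract weak-$\star$ and locally uniform limits from the uniform $L^\infty$ and H\"older bounds, use the space--time estimate on $u_\varepsilon^\varsigma$ together with the $L^1((W^{2,q})^\ast)$ bound on $\partial_t u_\varepsilon^\varsigma$ from Lemma~\ref{lemma45630hhuujjuuyytt} to apply Aubin--Lions and obtain a.e.\ convergence, then pass to the limit in the regularized weak identities via dominated convergence. Your choice $p=\varsigma-m+1$ (leading to a bound on $\nabla u_\varepsilon^{\varsigma/2}$ and then multiplying by the $L^\infty$ bound) versus the paper's $p=2\varsigma-m+1$ (directly bounding $\nabla u_\varepsilon^\varsigma$) is only a cosmetic difference in bookkeeping and does not change the substance of the argument.
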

\begin{proof}
Firstly, due to Lemma \ref{lemma45630hhuujjuu} and Lemma \ref{lemma45630hhuujjuuyytt}, 
for each $T > 0$,  we can find $\varepsilon$-independent constant $C(T)$ such that for all $t\in(0,T)$,
\begin{equation}
\|u_\varepsilon(\cdot,t)\|_{L^\infty(\Omega)}+ \|v_\varepsilon(\cdot,t)\|_{W^{1,\infty}(\Omega)} + \|w_\varepsilon(\cdot,t)\|_{W^{1,\infty}(\Omega)} \leq C(T)~~
\label{zjscz2.ssddd5297x9630111kk}
\end{equation}
as well as
\begin{equation}
\int_{0}^T\int_{\Omega}(u_{\varepsilon}+\varepsilon)^{m+p-3} |\nabla u_{\varepsilon}|^2\leq C(T)~~~\mbox{for any}~~p>1~~~\mbox{and}~~p\geq m- 1.
\label{fvgbhzjscz2.5sss297x96302222tt4455hyuhii}
 \end{equation}
  Now,   choosing $\varphi\in W^{1,2} (\Omega)$ as a second  function in the first equation in  \dref{5555710ssdd1.2x19x3189} and
using \dref{zjscz2.ssddd5297x9630111kk},
 we have
$$
\begin{array}{rl}
&\disp\left|\int_{\Omega}(v_{\varepsilon,t})\varphi\right|\\
 =&\disp{\left|\int_{\Omega}\left[\Delta v_{\varepsilon}-v_{\varepsilon}+u_{\varepsilon}\right]\varphi\right|}
\\
=&\disp{\left|\int_\Omega \left[-\nabla v_{\varepsilon}\cdot\nabla\varphi+v_{\varepsilon}\varphi+ u_{\varepsilon}  \varphi\right]\right|}\\
\leq&\disp{\left\{\|\nabla v_{\varepsilon}\|_{L^{2}(\Omega)}+ \|v_{\varepsilon}\|_{L^{2}(\Omega)}+ \|u_{\varepsilon}\|_{L^{2}(\Omega)}
\right\}}\times\disp{\|\varphi\|_{W^{1,2}(\Omega)}}\\
\end{array}
$$
for all $t>0$.
Along with \dref{zjscz2.ssddd5297x9630111kk}, further implies that
\begin{equation}
\begin{array}{rl}
&\disp\int_0^T\|\partial_tv_\varepsilon(\cdot,t)\|_{({W^{1,2}(\Omega)})^*}^{2}dt \\
\leq&\disp{\int_0^T\left\{\|\nabla u_{\varepsilon}\|_{L^{2}(\Omega)}+ \|v_{\varepsilon}\|_{L^{2}(\Omega)}+ \|n_{\varepsilon}\|_{L^{2}(\Omega)}
\right\}}^{2}dt
\\
\leq&\disp{C_1(T),}\\
\end{array}
\label{gbhncvbmdcfvgczffghhh2.5ghju48}
\end{equation}
where $C_1$ is a  positive constant independent of $\varepsilon$.
 Combining estimates \dref{zjscz2.ssddd5297x9630111kk}--\dref{gbhncvbmdcfvgczffghhh2.5ghju48} and the fact that $w_\varepsilon \leq \|w_0\|_{L^\infty(\Omega)}$, we can pick a sequence $(\varepsilon_j)_{j\in \mathbb{N}}\subset (0, 1)$ with $\varepsilon=\varepsilon_j\searrow0$ as $j\rightarrow\infty$  such that \dref{zjscz2.5297x9630222222ee}--\dref{zjscz2.5297x96302ssdd22ssdd2tt4} are valid with certain limit
functions $u,v$ and $w$ belonging to the indicated spaces.
%
We next fix $\varsigma> m$ satisfying $\varsigma\geq2(m-1)$ and set $p := 2\zeta-m+1$, then by \dref{fvgbhzjscz2.5sss297x96302222tt4455hyuhii} implies
that
for each
$T > 0,$ $(u_{\varepsilon}^\varsigma)_{\varepsilon\in(0,1)}$ is bounded in $L^2((0, T);W^{1,2}(\Omega))$.
 With the help of Lemma \ref{lemma45630hhuujjuuyytt}, we  also show that
$$(\partial_{t}u_{\varepsilon}^\varsigma)_{\varepsilon\in(0,1)}~~\mbox{is bounded in}~~L^1((0, T); (W^{2,q}(\Omega))^*)~~\mbox{for each}~~  T > 0$$
and some $q>N.$
Hence, an Aubin-Lions lemma (see e.g. \cite{Simon})
applies to the above inequality we have the strong precompactness of
%
%
%
$(u_{\varepsilon}^\varsigma)_{\varepsilon\in(0,1)}$ in $L^2(\Omega\times(0, T))$.
Therefore,
we can
  pick a
suitable subsequence such that $u_{\varepsilon}^\varsigma\rightarrow z^\varsigma$ for some nonnegative measurable $z:\Omega\times(0,\Omega)\rightarrow\mathbb{R}$.
 In
light of  \dref{zjscz2.5297x9630222222ee} and the Egorov theorem, we have $z = u$ necessarily, so that \dref{zjscz2.5297x9630222222} is valid.

 Next we shall prove that $(u,v,w)$ is a weak solution of problem \dref{7101.2x19x3sss189}.  To this end, multiplying the first equation
 as well as  the
second  equation and third equation in  \dref{5555710ssdd1.2x19x3189} by $\varphi\in C^\infty_0(\Omega\times [0,\infty))$, we obtain
\begin{equation}
\begin{array}{rl}\label{eqx4ss5xx12112ccgghh}
\disp{-\int_0^{\infty}\int_{\Omega}u_\varepsilon\varphi_t-\int_{\Omega}u_0\varphi(\cdot,0)  }=&\disp{
\int_0^\infty\int_{\Omega}H(u_\varepsilon+\varepsilon)ds\Delta\varphi+\chi\int_0^\infty\int_{\Omega}u_\varepsilon\nabla v_\varepsilon\cdot\nabla\varphi}\\
&+\disp{\xi\int_0^\infty\int_{\Omega}u_\varepsilon\nabla w_\varepsilon\cdot\nabla\varphi+\int_0^{\infty}\int_{\Omega}(\mu u_\varepsilon-\mu u_\varepsilon w_\varepsilon- \mu u_\varepsilon^2)\varphi}\\
\end{array}
\end{equation}
as well as
\begin{equation}
\begin{array}{rl}\label{eqx45xsssx12112ccgghhjj}
&\disp{-\int_0^{\infty}\int_{\Omega}v_\varepsilon\varphi_t-\int_{\Omega}v_0\varphi(\cdot,0)=-
\int_0^\infty\int_{\Omega}\nabla v_\varepsilon\cdot\nabla\varphi-\int_0^\infty\int_{\Omega}v_\varepsilon\varphi+\int_0^\infty\int_{\Omega}u_\varepsilon\varphi}
\end{array}
\end{equation}
and
\begin{equation}
\begin{array}{rl}\label{sddddfgghh}
&\disp{-\int_0^{\infty}\int_{\Omega}w_\varepsilon\varphi_t-\int_{\Omega}w_0\varphi(\cdot,0)=-
\int_0^\infty\int_{\Omega}v_\varepsilon w_\varepsilon\varphi.}
\end{array}
\end{equation}
 for all $\varepsilon\in (0,1)$, where $H$ is given by \dref{zjscz2.5297x963ssddd0111kkuu}.
 Then \dref{zjscz2.5297x9630222222}--\dref{zjscz2.5297x96302222tt4},  and the dominated convergence theorem enables
us to conclude
$$
\begin{array}{rl}\label{eqx45xx12112ccgghh}
&\disp{-\int_0^{\infty}\int_{\Omega}u\varphi_t-\int_{\Omega}u_0\varphi(\cdot,0) }
\\
=&\disp{
\int_0^\infty\int_{\Omega}H(u)\Delta\varphi+\chi\int_0^\infty\int_{\Omega}u
\nabla v\cdot\nabla\varphi+\xi\int_0^\infty\int_{\Omega}u
\nabla w\cdot\nabla\varphi}
+\disp{\int_0^\infty\int_{\Omega}(\mu u-\mu uw-\mu u^2)\varphi}
\end{array}
$$
as well as
$$
\begin{array}{rl}\label{eqx45xx12112ccgghhjj}
&\disp{-\int_0^{\infty}\int_{\Omega}v\varphi_t-\int_{\Omega}v_0\varphi(\cdot,0)=-
\int_0^\infty\int_{\Omega}\nabla v\cdot\nabla\varphi-\int_0^\infty\int_{\Omega}v\varphi+\int_0^\infty\int_{\Omega}u\varphi}
\end{array}
$$
and
$$
\begin{array}{rl}
&\disp{-\int_0^{\infty}\int_{\Omega}w\varphi_t-\int_{\Omega}w_0\varphi(\cdot,0)=-
\int_0^\infty\int_{\Omega}v w\varphi}
\end{array}
$$
by a limit procedure.
The proof of Lemma \ref{lemma45630223} is completed.
\end{proof}

%

%
%

We can now easily prove our main result.

{\bf The proof of Theorem \ref{theossdrem3}}~

A combination of Lemma \ref{lemma45630hhuujjuu} and Lemma \ref{lemma45630223} directly leads to
our desired result.


{\bf Acknowledgement}:
 This work is partially supported by the National Natural Science
Foundation of China (No. 11601215), Shandong Provincial Science Foundation for Out-
standing Youth (No. ZR2018JL005) and Project funded by China Postdoctoral Science
Foundation (No. 2019M650927, 2019T120168).

%
%

%

\end{document}